
\documentclass[10pt]{amsart}
\usepackage[utf8]{inputenc}
\usepackage[margin=1in]{geometry}
\usepackage{amsfonts,amssymb,amsthm,amsmath,enumitem,comment,bbm}
\usepackage[usenames,dvipsnames]{xcolor}
\usepackage{tikz}
\usepackage{caption}
\usepackage{subcaption}
\usepackage{graphicx}
\usepackage[hidelinks]{hyperref}
\usepackage{mathabx}   

 \usepackage{enumitem}    


\setlength{\oddsidemargin}{-0.2in}
\setlength{\textwidth}{7in}
\setlength{\textheight}{9in}

\newcounter{problempart}

\allowdisplaybreaks
\usepackage{scalerel,stmaryrd}

\newcommand{\N}{\mathbb{N}}
\newcommand{\R}{\mathbb{R}}
\newcommand{\Q}{\mathbb{Q}}
\newcommand{\Z}{\mathbb{Z}}

\newcommand{\C}{\mathcal{C}}

\newcommand{\E}{\mathbb{E}}

\newcommand{\M}{\mathcal{M}}
\newcommand{\cN}{\mathcal{N}}

\newcommand{\ve}{\varepsilon}
\newcommand{\Ll}{\mathcal{L}}
\newcommand{\Pp}{\mathbb P}
\newcommand{\f}{\frac}
\newcommand{\deq}{\overset{d}{=}}
\newcommand{\mbf}{\mathbf}

\newcommand{\wt}{\widetilde}

\newcommand{\Rup}{\R_{\uparrow}^4}

\newcommand{\W}{W}

\newcommand{\DLBusedc}{\Xi}
\newcommand{\dir}{\xi}

\newcommand{\sig}{{\scaleobj{0.8}{\boxempty}}} 
\newcommand{\sigg}{{\scaleobj{0.9}{\boxempty}}} 

\newcommand{\arr}{a}
\newcommand{\arrv}{\pmb{a}}
\newcommand{\srv}{s}
\newcommand{\srvv}{\pmb{s}}
\newcommand{\dep}{d}
\newcommand{\depv}{\pmb{d}}
\newcommand{\Qs}{\mathcal{U}}
\newcommand{\Qu}{\mathcal{Q}}

\def\bq#1{\pmb{#1}}
\newcommand{\ml}{\mathcal{X}}
\newcommand{\Vmap}{\mathcal{V}}

\def\Ind{\mathcal{E}_{ind}}
\newcommand{\cE}{\mathcal{E}}
\def\en{M}


\newtheorem{theorem}{Theorem}[section]
\newtheorem{proposition}[theorem]{Proposition}
\newtheorem{corollary}[theorem]{Corollary}
\newtheorem{lemma}[theorem]{Lemma}
\newtheorem{conjecture}[theorem]{Conjecture}

\theoremstyle{definition}
\newtheorem{definition}[theorem]{Definition}

\newtheorem{assumption}{Assumption}
\numberwithin{equation}{section}
\theoremstyle{remark}
\newtheorem{remark}[theorem]{Remark}

\newcommand{\be}{\begin{equation}}
\newcommand{\ee}{\end{equation}}

\def\tsp{\hspace{0.5pt}}  
\def\tspa{\hspace{0.7pt}}
\def\tspb{\hspace{0.9pt}}

\setcounter{tocdepth}{1}

\setlength{\oddsidemargin}{-0.1in} 
\setlength{\textwidth}{6.5in}
\setlength{\topmargin}{-.75in} \setlength{\textheight}{9.75in}

\newcommand\abullet{{\raisebox{2pt}{\scaleobj{0.5}{\bullet}}}}  
\newcommand\aabullet{{\tspb\raisebox{2pt}{\scaleobj{0.5}{\bullet}}\,}}  
\newcommand\aaabullet{{\tspb\raisebox{1pt}{\scaleobj{0.5}{\bullet}}\tspb}}  
\newcommand\bbullet{{\raisebox{0.5pt}{\scaleobj{0.6}{\bullet}}}} 



\def\ind{\mathbf{1}}
\def\ddd{\displaystyle}

\def\timonote#1{{\color{teal}{(T: #1)}}}

\def\ofernote#1{{\color{blue}{(O:#1)}}}

\def\MTR{K}   

\newcommand{\lzb}{\llbracket}   
\newcommand{\rzb}{\rrbracket}   

\newcommand{\fl}[1]{\lfloor{#1}\rfloor} 
\newcommand{\ce}[1]{\lceil{#1}\rceil}

\def\bck#1{{\overleftarrow{#1}}}

\DeclareMathOperator\Cls{Cls} 
\DeclareMathOperator\TwoRare{TwoRare}
\DeclareMathOperator\ClsJmp{ClsJmp} 
\DeclareMathOperator\OneRare{OneRare}

 \usepackage{enumitem}   

\title{Scaling limit of the TASEP speed process}

\author{Ofer Busani}
\address{Ofer Busani, Universit\"at Bonn,
Endenicher Allee 60,
Bonn, Germany}
\email{busani@iam.uni-bonn.de}
\author{Timo Sepp{\"a}l{\"a}inen}
\address{Timo Sepp{\"a}l{\"a}inen, University of Wisconsin-Madison, Mathematics Department, Van Vleck Hall, 480
Lincoln Dr., Madison WI 53706-1388, USA.}
\email{seppalai@math.wisc.edu}
\author{Evan Sorensen}
\address{Evan Sorensen, University of Wisconsin-Madison, Mathematics Department, Van Vleck Hall, 480
Lincoln Dr., Madison WI 53706-1388, USA.}
\email{elsorensen@wisc.edu}

\subjclass[2020]{60K35,60K37}
\keywords{Busemann function, directed landscape, exclusion process, KPZ fixed point, KPZ universality, multiclass process, multitype invariant distribution, particle system, queueing theory, second class particle, stationary horizon, TASEP}

\begin{document}
\maketitle

	\begin{abstract}
  The TASEP speed process introduced by Amir, Angel and Valk\'o in 2011 is a simultaneous coupling of all the translation-ergodic invariant distributions of multiclass totally asymmetric simple exclusion processes (TASEPs). It is defined as the process of limiting speeds of second-class particles started from each lattice site so that initially each particle sees a full lattice behind and an empty lattice ahead.  We show that suitably scaled, the TASEP speed process converges weakly to the stationary horizon (SH), a stochastic process recently introduced and studied by the authors. Specifically, around each interior speed value, the family of continuously interpolated level curves of the TASEP speed process converges to a coupled family of Brownian motions with drift, and this limiting function-valued stochastic process is precisely SH. SH is  believed to be the universal scaling limit of Busemann processes in the KPZ universality class. Our results add to the evidence for this universality by connecting SH with multiclass particle configurations.  Previously SH has been associated with the exponential corner growth  model, Brownian last-passage percolation, and the directed landscape (DL). As a consequence of the DL connection, we show that, in a certain technical sense, the set of speed process values converges weakly to the set of exceptional directions of DL, and the convoys of equal speed process values converge to the Busemann difference profiles. 
	\end{abstract}
	
\tableofcontents	
	
	\section{Introduction}

 \subsection{Universality in KPZ}
	The Kardar-Parisi-Zhang (KPZ) universality class is a large collection of random growth models that share a common scaling limit called the KPZ fixed point, a continuous-time Markov process taking values in the space of upper semi-continuous functions on the reals. The meaning of the ``universality'' of the KPZ class has  gradually developed over the past quarter century, from the one-dimensional distribution \cite{Baik-Deift-Johansson-99,baik2000limiting}, through the functional one \cite{Prahofer-Spohn-02,sasamoto2005spatial,Baik-Ferrari-Peche-10,borodin2008transition}, as line ensembles  \cite{corwin2014brownian,dimitrov2021characterization},  as a Markov process \cite{KPZfixed,KPZ_equation_convergence,heat_and_landscape}, and finally as a ``directed metric'' \cite{Directed_Landscape,Dauvergne-Virag-21}.
	
	Recently, the first author introduced a new scaling limit, the \textit{stationary horizon} (SH) \cite{Busani-2021}.  SH is a continuous-function-valued cadlag process indexed by the real line. Its construction   was achieved, building on results from \cite{Fan-Seppalainen-20}, through a diffusive  scaling of the Busemann process of  exponential last-passage percolation (LPP).  Not long after and independently  the second and third author discovered the SH as the Busemann process of the Brownian LPP \cite{Seppalainen-Sorensen-21b}, and uncovered quantitative information about its finite-dimensional distributions.  Very broadly speaking, Busemann processes are random objects holding much of the information on infinite geodesics in metric-like models \cite{Newman, hoffman2008, Sepp_lecture_notes}. It was conjectured in \cite{Busani-2021} that the SH is the scaling limit of the Busemann process of models in the KPZ class. 
	
	LPP models in the KPZ class  belong to a family of metric-like models: they satisfy a form of the triangle inequality, but are not necessarily positive or symmetric. These models are believed to  share a common limiting behavior under the  
 $1\!:\!2\!:\!3$ 
 scaling, namely,  the directed landscape (DL) \cite{Directed_Landscape,Dauvergne-Virag-21}. The DL holds more information than the KPZ fixed point in the sense that it allows for the coupling of initial conditions.  In \cite{Busa-Sepp-Sore-22arXiv}, building on results from \cite{Rahman-Virag-21}, the authors of the present  paper showed that the SH is the Busemann process of the directed landscape, thus settling part of a conjecture from \cite{Dauvergne-2021}. The result proved to have valuable applications to the study of infinite geodesics in the DL. The work of~\cite{Rahman-Virag-21} also studied the scaling limit of the trajectory of a second class particle for the particle system known as TASEP (discussed below) and showed that it converges to the competition interface of the DL.

 \subsection{Exclusion processes}
	Among the many types of models in the KPZ class are interacting particle systems, in particular,  exclusion processes. These models consist of particles on $\Z$, each performing an independent rate 1 continuous-time random walk with jump kernel $p:\Z\times \Z\rightarrow [0,1]$ under the exclusion rule: a particle's attempted  jump is executed  
	if the target site is vacant, otherwise suppressed. In the Harris-type probabilistic graphical construction of such a process we  attach to each directed edge $(x,y)$ a Poisson clock of rate $p(x,y)$ that generates the jump attempts.  Since their introduction in the mathematical literature in the 1970s \cite{spitzer1991interaction} exclusion processes have been extensively studied \cite{liggett1985interacting}. Exclusion processes can be mapped into growing interfaces, which under some conditions  (including positive drift) are believed to be in the KPZ class \cite{Corwin-survey}.  
	
	The particular  case  $p(x,x+1)=1$  is   the \textit{totally asymmetric simple exclusion process} (TASEP).  Each particle attempts to make nearest-neighbor jumps to the right at rate one, and a jump is executed only if the  site to the right is empty. There is a coupling between exponential LPP and the TASEP, and so showing that one is in the KPZ class implies the membership of the other. However, this connection between metric-like models and particle systems does not hold in general. The two families of models are amenable to different techniques. For example, the proof of the convergence of the KPZ equation to the KPZ fixed point was achieved  through two different approaches, where \cite{KPZ_equation_convergence} is tailored for particles systems while \cite{heat_and_landscape} is more suitable for  LPP and polymer models.
	
	Previously SH has been found in the context of LPP models. In this paper we complement the picture by showing  that   \textit{SH appears as a scaling limit also in exclusion processes}.  
	While geodesics and Busemann functions might not have natural counterparts in exclusion processes,   one feature of the Busemann function is common, namely, its invariance  under the dynamics of the model. Stationary measures of one-dimensional  exclusion processes are well-known \cite[Chapter VIII]{liggett1985interacting}: under very  general assumptions on $p$, the i.i.d.\ Bernoulli product measures  $\nu^\rho$ on $\{0,1\}^\Z$ with particle density  $\rho\in[0,1]$ are the translation-invariant, extremal  stationary measures under the exclusion dynamics. 

 \subsection{Single and multitype stationary distributions of TASEP}
	The family $\{\nu^\rho\}_{\rho\in[0,1]}$ has been  instrumental for example in the study of hydrodynamic limits of exclusion processes \cite{kipnis1998scaling}. In \cite{benassi1987hydrodynamical,andjel1987hydrodynamic},  it was shown that when started from $\nu^{\lambda,\rho}$ (the product measure on $\Z$ with intensity $\lambda$ to the left of the origin and intensity $\rho$ to the right), the TASEP particle profile will converge to either a rarefaction fan or a moving shock depending on the values of $\rho$ and $\lambda$. When $\rho>\lambda$, i.e.\ the shock hydrodynamics,  \cite{ferrari1991microscopic}  showed the existence of a microscopic stationary  profile  as seen from the shock. These studies utilized couplings $\mu^{\lambda,\rho}$ of the measures $\nu^\rho$ and $\nu^\lambda$ that are themselves  stationary under the joint TASEP dynamics of two processes that evolve in   \textit{basic coupling}.  Basic  coupling means that two or more exclusion processes, each from their own initial state,  are run together with common Poisson clocks.

	The stationary measure  $\mu^{\lambda,\rho}$ is sometimes called the two-type stationary measure. This is because one can realize the basic coupling by introducing two types of particles on $\Z$:  first class particles  whose distribution is $\nu^\lambda$, and second class particles, so that, when classes are ignored, the distribution of first and second class particles together  is   $\nu^\rho$. The dynamics is such that first class particles have priority over second class particles in the sense that the latter are treated as holes by the former. Second class particles represent discrepancies and so track the flow of information across space-time. Hence in some sense they assume the role of LPP geodesics. In the   hydrodynamic limit their space-time trajectories trace the characteristics of the limiting scalar conservation law \cite{ferr-92,ferrari1995second,reza-95,sepp01IIclass}.
	
	The two-type stationary measures
	$\mu^{\lambda,\rho}$ generalize  to 
		multitype stationary measures $\mu^{\rho_1,\dotsc,\rho_n}$. These measures  and their Ferrari-Martin construction by queueing mappings  \cite{Ferrari-Martin-2007}   are central players in this paper.  

 \subsection{Second class particles and the speed process}
	In \cite{ferrari1995second}, it was shown that   the normalized position of the  second class particle started at the origin in the step initial condition  converges in probability to a random speed uniformly distributed on $[-1,1]$. This convergence was strengthened to an almost sure one in \cite{mountford2005motion}. In other words, the second class particle chooses a limiting speed or characteristic line uniformly at random.
		The situation was further studied in  
	\cite{ferrari2009collision} which showed that the probability that a second class particle overtakes a third class particle in the rarefaction fan is $2/3$. 
	To obtain the full joint distribution of the speeds of particles of infinitely many classes, Amir, Angel, and Valk\'o~\cite{Amir_Angel_Valko11} constructed   the  \textit{TASEP speed process} $\{U_i\}_{i\in\Z}$. For each $i\in\Z$, the uniformly random value  $U_i\in[-1,1]$ is the limiting speed of the second class particle that started in a step configuration centered at site $i$.  The reader is referred to \cite{Amir_Angel_Valko11} for some of the fascinating properties of the speed process.  More recent studies of speed processes appear in \cite{aggarwal2022asep,amir2021tazrp}.  For our purposes, the key features of  the TASEP speed process are that it itself  is again invariant under suitably formulated multiclass TASEP dynamics, 
	and it provides a simultaneous coupling of all the stationary multiclass measures $\mu^{\rho_1,\dotsc,\rho_n}$ for any number of particle classes.  
	

\subsection{TASEP speed process, SH, and DL}
	Our  main result   Theorem \ref{thm:conv} states that when suitably scaled around a speed $v\in(-1,1)$, the TASEP speed process converges in distribution to SH. In particular, we  connect the multitype stationary distributions of TASEP to SH through the TASEP speed process. 
 The information used in the scaling is the number of particles in  a lattice interval of order $N$ whose speed deviates from the centering $v$ by order $N^{-1/2}$. These particle counts are converted into continuous height  functions by the standard mapping that turns TASEP particle configurations into interfaces. The joint process of these height functions is then scaled diffusively.  

 Since  SH is the distribution of the Busemann process of DL, as a corollary we get a limit theorem that captures the convergence of the scaled and centered speed process values to the \textit{exceptional directions} of DL, and the convergence of the 
 interpolated cumulative convoys to the  Busemann difference profiles of DL. 
 The exceptional directions of DL are those into which the uniqueness and coalescence of semi-infinite geodesics fail.  These results are proved in terms of the weak convergence of a simple point measure of speed process values and interpolated convoys to the corresponding object in DL (Theorem \ref{thm:Xi}).


\subsection{Basic coupling versus LPP construction}
In response to several queries about this work, we emphasize that \textit{the result is not a  consequence of the SH limit of the Busemann process of exponential LPP given in  \cite{Busani-2021},  nor a consequence of the KPZ  limit of multiple TASEPs given in  \cite[Theorem 1.20]{Dauvergne-Virag-21}}. The underlying reason is the distinction between two constructions of TASEP: with Poisson clocks on the edges of $\Z$, and in terms of LPP on the planar lattice. These two constructions yield the same process when TASEP is started from a single initial condition. Less clear is the connection  between multiple TASEPs in basic coupling constructed with  Poisson clocks, and the dynamics of LPP applied to multiple initial conditions.  The multiclass distributions studied here are invariant for joint TASEPs in basic coupling, constructed in terms of Poisson clocks.  By contrast, the SH limit in  \cite{Busani-2021} and the joint KPZ limit in \cite{Dauvergne-Virag-21} utilize LPP.    For this same reason we do not yet have a space-time limit that would connect the temporal evolution of multiclass TASEP with a space-time process whose invariant distribution is SH. The multivariate KPZ fixed point, constructed in terms of the variational formula in the random environment of DL, does possess SH as an invariant distribution \cite[Theorem 2.1]{Busa-Sepp-Sore-22arXiv}.

\subsection{Notation and conventions}

We collect here some conventions for quick reference. 
$\Z_+=\{0,1,2,\dotsc\}$ and $\N=\{1,2,3,\dotsc\}$. Integer intervals are denoted by $\lzb a,b\rzb=\{n\in\Z: a\le n\le b\}$.  The space $C(\R)$ of continuous functions on $\R$ is equipped with its  Polish topology of uniform convergence on compact subsets of $\R$. 
The indicator function of an event $A$ is  denoted by $\ind[A]$ and $\ind_A$.  The floor and ceiling of a real $x$ are $\fl{x}=\max\{n\in\Z: n\le x\}$ and $\ce{x}=\min\{n\in\Z: n\ge x\}$.

For random variables $X$, $Y$ and  $Z$ and a probability measure $\mu$,  $X\sim Y$ and $X\deq Y$  mean that $X$ and $Y$ have the same distribution and  $Z\sim\mu$ means that $Z$ has distribution $\mu$. Convergence in distribution is denoted by $\Rightarrow$.  $Z\sim{\rm Ber}(\alpha)$ is the abbreviation for the Bernoulli distribution $P(Z=1)=\alpha=1-P(Z=0)$.  When the value $Z=0$ represents a vacant site in a particle configuration, in certain situations $Z=0$ is replaced by $Z=\infty$.  $X\sim$ Geom$(p)$ means that $P(X=k)=p(1-p)^k$ for $k\in\Z_+$, that is, the distribution of the number of failures until the first success with probability $p$.   

If $B(\aaabullet)$ is a standard Brownian motion, then for $c>0$ and $\lambda\in\R$,  $t\mapsto c B(t)+\lambda t$ is a Brownian motion with diffusivity $c$ and drift $\lambda$. 

The i.i.d.\ Bernoulli product measure $\nu^\alpha$ on the sequence  space $\{0,1\}^\Z$ satisfies  
$\nu^\alpha\{\eta: \eta(x_1)=\dotsm=\eta(x_m)=1\}=\alpha^m$ for any $m$ distinct sites $x_1,\dotsc,x_m\in\Z$ and with  generic elements of $\{0,1\}^\Z$ denoted by $\eta=\{\eta(x)\}_{x\in\Z}$. We call $\alpha$ the \textit{density} or the  \textit{intensity} of $\nu^\alpha$. As above, empty sites are denoted by both $0$ and $\infty$, depending on the context.  \textit{Translation invariance} of a probability measure on a sequence space  means invariance under the mapping $(T\eta)(x)=\eta(x+1)$. 

In queueing theory, a bi-infinite sequence is denoted by a boldface version of the same letter that denotes the entries, together with additional indices, as for example in $\bq{x}_k=\{x_k(j)\}_{j\in\Z}\in\{1,\infty\}^\Z$.  

Single-variable functions apply to sequences coordinatewise: if $u=\{u_i\}_{i\in\Z}\in\R^\Z$   and $\phi:\R\to\R$, then $\phi(u)=\{\phi(u_i)\}_{i\in\Z}$.

In proofs, constants such as $C$ and $c$ can change from line to line. 

\subsection{Acknowledgements}
We thank Duncan Dauvergne for pointing out a mistake in the first version of this paper and for helpful discussions. O.~Busani also thanks Pablo Ferrari  for a guide to the literature and and M\'arton Bal\'azs for helpful discussions.  The work of O.~Busani was funded by the Deutsche Forschungsgemeinschaft (DFG, German Research Foundation) under Germany’s Excellence Strategy--GZ 2047/1, projekt-id 390685813, and partly performed at  University of Bristol. T.\ Sepp\"al\"ainen was partially supported by National Science Foundation grant DMS-2152362 and by the Wisconsin Alumni Research Foundation. E.~Sorensen was partially supported by T.~Sepp{\"a}l{\"a}inen under National Science Foundation grant DMS-2152362.


 \section{Stationary horizon limit of the speed process} 
 \label{sec:main} 
	
	We first introduce the TASEP speed process and the stationary horizon (SH). Then we explain how the speed process is scaled and state the main result, namely,  that the scaled speed process converges weakly to SH  on a function-valued cadlag path space (Theorem~\ref{thm:conv} below).  
	
	\subsection{TASEP speed process} \label{sec:TSPintro}
	In the simplest TASEP dynamics each site of $\Z$  contains either a particle or a hole.  Each site has an  independent rate 1 Poisson clock. If at time $t$ the clock rings at site $x\in\Z$ the following happens. If there is a particle at site $x$ and no particle at site $x+1$ then the particle at site $x$ jumps to site $x+1$, while the other sites  remain unchanged. If there is no particle at site $x$ or there is a particle at site $x+1$ then the jump is suppressed. In other words, a particle can jump to the right only if the target site has no particle at the time of the jump attempt. This is the \textit{exclusion rule}. TASEP is a Markov process on the compact  state space $\{0,1\}^\Z$. Generic elements of $\{0,1\}^\Z$, or \textit{particle configurations}, are denoted by $\eta=\{\eta(x)\}_{x\in\Z}$, where $\eta(x)=1$  means that site $x$ is occupied by a particle and $\eta(x)=0$ that site $x$ is occupied by a hole, in other words, is empty. The infinitesimal  generator $\mathcal{L}$  of the process acts on functions  $f$ on $\{0,1\}^\Z$ that are supported on finitely many sites  via 
	\begin{equation}\label{Tg}
		\mathcal{L}f(\eta)=\sum_{x\in\Z}\eta(x)(1-\eta(x+1))[f(\eta^{x,x+1})-f(\eta)]
	\end{equation}
	where $\eta^{x,x+1}$ denotes the configuration after the contents of sites $x$ and $x+1$ have been exchanged: 
	\begin{equation*}
		\eta^{x,x+1}(z)=
		\begin{cases}
			\eta(z) & \text{ if $z\notin\{x,x+1\}$}\\
			\eta(x+1)& \text{ if $z=x$}\\
			\eta(x)& \text{ if $z=x+1$}.
		\end{cases}
	\end{equation*}
We do not work with the generator, but it serves as a convenient summary of the dynamics.

	For each density $\rho\in[0,1]$ the i.i.d.\ Bernoulli  distribution $\nu^\rho$ on $\{0,1\}^\Z$ with density $\rho$ is  the unique translation-invariant extremal stationary distribution of particle density $\rho$ under the TASEP dynamics. 
	
	There is a natural way to couple multiple  TASEPs from different initial conditions but with the same driving dynamics. Let $ \{\cN_x:x \in \Z\}$ be a $\Z$-indexed  collection of independent  rate $1$ Poisson  processes on $\R$. The clock at location $x$ rings at the times that correspond to points in $\cN_x$. One can then take two densities $0\leq \rho^1\leq \rho^2\leq 1$ and ask whether there exists a coupling measure $\pi^{\rho_1,\rho_2}$ on $\{0,1\}^\Z\times\{0,1\}^\Z$ with Bernoulli marginals $\nu^{\rho_1}$ and $\nu^{\rho_2}$  that  is stationary under the joint TASEP dynamics
	and ordered. In other words, the twin requirements are    that if initially  $(\eta^1,\eta^2)\sim \pi^{\rho_1,\rho_2}$,  then $(\eta^1_t,\eta^2_t)\sim\pi^{\rho_1,\rho_2}$ at all subsequent times $t\ge0$, and $\eta^1(x)\leq \eta^2(x)$ for all $x\in\Z$ with $\pi^{\rho_1,\rho_2}$-probability one. Such a two-component  stationary distribution exists and is unique \cite{Liggett76}.
	
	One reason for the  interest in stationary measures of more than one density comes from the connection between the TASEP dynamics on $k$ coupled profiles in the state space $(\{0,1\}^\Z)^k$ and the TASEP dynamics on particles with classes in $\lzb1,k\rzb=\{1,\ldots,k\}$, called multiclass or multitype  dynamics. In the $k$-type dynamics, each  particle has a class in $\lzb1,k\rzb$ that remains the same for all time. A  particle  jumps to the right, upon the ring of a Poisson clock, only if there is either a hole or a particle of lower class (higher label) to the right. If this happens, the lower class particle moves left.  The state space of  $k$-type dynamics is  $\{1,\ldots,k,\infty\}^\Z$, with generic configurations denoted again by $\eta=\{\eta(x)\}_{x\in\Z}$. A value   $\eta(x)=i\in\lzb1,k\rzb$ means that site $x$ is occupied by  a particle of class $i$, and  $\eta(x)=\infty$ means  that site $x$ is empty, equivalently, occupied by a   hole.  Denoting a hole by $\infty$ is convenient now because holes can be equivalently viewed as particles of the absolute  lowest class.  
	For $k=1$ the multitype dynamics is the same as basic TASEP.
	
	The next question is whether we can  couple all the invariant multiclass distributions  so that the resulting construction is still invariant under TASEP dynamics. This was achieved by \cite{Amir_Angel_Valko11}:    such couplings can be realized by applying   projections to an object they constructed and named the \textit{TASEP speed process}. We describe briefly the construction. To start,  each site  $i\in\Z$ is occupied by a particle of class $i$.  This creates the initial   profile $\eta_0\in \Z^\Z$ such that $\eta_0(i)=i$. Let $\eta_t$ evolve under  TASEP dynamics, now interpreted so that a  particle switches places with the particle to its right only if the particle to the right  is of lower class, that is, has a higher label. Note that now each site is always occupied by a particle of some integer label.   The limit  from \cite{mountford2005motion}  
	implies  that each  particle has a well-defined limiting    speed:  if $X_t(i)$ denotes  the time-$t$ position of the particle initially at site $i$,  then the following random limit exists almost surely: 
	\begin{equation}\label{speed8}
	U_i= \lim_{t\to\infty} t^{-1}  X_t(i). 
	\end{equation}      
	The process $\{U_i\}_{i\in\Z}$ is   the TASEP speed process. It is a random element of the space  $[-1,1]^\Z$. 

\begin{theorem}[{\cite[Theorem 1.5]{Amir_Angel_Valko11}}]  \label{thm:AAV}
    The TASEP speed process $\{U_i\}_{i\in\Z}$ is the unique invariant distribution of TASEP that is ergodic under translations of the lattice $\Z$ and such that each $U_i$ is uniformly distributed on $[-1,1]$. 
\end{theorem}
In the context of the theorem above, the TASEP state  $\eta=\{\eta(i)\}_{i\in\Z}$ is a real-valued sequence but the meaning of the dynamics is the same as before.  Namely, at each pair $\{i,i+1\}$ of nearest-neighbor sites, at the rings of a rate one exponential clock, the variables $\eta(i)$ and $\eta(i+1)$ are swapped if $\eta(i)<\eta(i+1)$, otherwise left unchanged.

 A key point  is that the
		 TASEP speed process projects to multitype stationary distributions. 
		
	
 \begin{theorem}[{\cite[Theorem 2.1]{Ferrari-Martin-2007}, \cite[Theorem 2.1]{Amir_Angel_Valko11}}] \label{thm:TASEPprod}
	Let $k\in\N$ be the number of classes. Let  $\bar\rho=(\rho_1,\ldots,\rho_k)\in(0,1)^k$ be a parameter vector such that  $\sum_{i=1}^k  \rho_i \le 1$. Then there is a  translation-invariant  stationary distribution $\widecheck\mu^{\bar\rho}$ for the $k$-type TASEP which is unique under the conditions {\rm(i)} and {\rm(ii)}, and also under the conditions {\rm(i)} and {\rm(ii')} below:  
 
 {\rm(i)}  $\widecheck\mu^{\bar\rho}\{\eta\in \{1,\ldots,k,\infty\}^\Z:\eta(x) = j\} = \rho_j$ for each site $x\in\Z$ and class $j\in\lzb1,k\rzb$; 
 
 {\rm(ii)} under $\widecheck\mu^{\bar\rho}$, for each $\ell\in\lzb1,k\rzb$, the distribution of the $\{0,1\}$-valued sequence  $\{\ind[\eta(x) \le \ell]\}_{x \in \Z}$ of indicators  is the i.i.d.\ Bernoulli measure $\nu^{\sum_{j=1}^{\ell} \rho_j}$ of intensity  $\sum_{j=1}^{\ell} \rho_j$;

 {\rm(ii')}  $\widecheck\mu^{\bar\rho}$ is  ergodic under the translation of the lattice $\Z$.
 
 Furthermore, $\widecheck\mu^{\bar\rho}$ is   extreme among translation-invariant stationary   measures of the $k$-type dynamics with jumps to the right. 
	\end{theorem}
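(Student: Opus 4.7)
I would proceed by induction on the number of classes $N$, constructing $\widecheck\mu^{\bar\rho}$ explicitly via the Ferrari--Martin queueing map. The base case $N=1$ is the classical fact that the Bernoulli product measure $\nu^{\rho_1}$ is the unique translation-invariant extremal stationary distribution of density $\rho_1$ for TASEP; conditions (i) and (ii) hold trivially. For the inductive step, write $r=\sum_{j=1}^{N-1}\rho_j$ and sample an $(N-1)$-type configuration $\widetilde\eta\sim\widecheck\mu^{\rho_1,\ldots,\rho_{N-1}}$ together with an independent i.i.d.\ Bernoulli sequence $\zeta\sim \nu^{\rho_N/(1-r)}$ indexed by $\Z$. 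Apply a translation-equivariant queueing map $D$ that uses the vacant sites of $\widetilde\eta$ as ``service slots'' and the $1$'s of $\zeta$ as class-$N$ ``customers,'' placing each customer into the next available vacant slot and leaving the remaining vacancies as holes (class $\infty$). The output $\eta=D(\widetilde\eta,\zeta)$ takes values in $\{1,\ldots,N,\infty\}^\Z$, and its law is translation invariant by construction.

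To verify (i) and (ii), I would apply a Burke-type output theorem for the stationary queue. By the inductive hypothesis at $\ell=N-1$, the vacant sites of $\widetilde\eta$ form an i.i.d.\ Bernoulli stream of intensity $1-r$ independent of the class labels on the occupied sites; thinning this stream by the independent marks $\zeta$ produces a Bernoulli$(\rho_N)$ set of class-$N$ sites and a Bernoulli$(1-r-\rho_N)$ set of remaining holes, jointly independent of the class labels inherited from $\widetilde\eta$. This simultaneously yields $P(\eta(x)=j)=\rho_j$ for every $j\in\lzb 1,N\rzb$ and the required i.i.d.\ Bernoulli law for $\{\ind[\eta(x)\le \ell]\}_{x\in\Z}$ at each $\ell\in\lzb 1,N\rzb$.

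The main obstacle is showing that $\widecheck\mu^{\bar\rho}$ is invariant under the $N$-type TASEP dynamics. The plan is to establish the following intertwining: if $(\widetilde\eta_t,\zeta_t)$ evolves under the basic coupling with common Poisson clocks, with $\widetilde\eta$ following the $(N-1)$-type TASEP and $\zeta$ following basic one-type TASEP, then $D(\widetilde\eta_t,\zeta_t)$ has the same joint distribution as the configuration obtained by running $N$-type TASEP from the initial data $D(\widetilde\eta_0,\zeta_0)$. This identity reduces to a finite check: at each Poisson firing on an edge $(x,x+1)$ one enumerates the possible class labels of $\widetilde\eta$ and $\zeta$ at $x$ and $x+1$ and verifies case by case that the queueing map applied after the jump yields the same updated labels as running the $N$-type exclusion rule on the queueing-map output. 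Combined with the inductive stationarity of $\widetilde\eta$ and the Bernoulli stationarity of $\zeta$, the intertwining yields stationarity of $\widecheck\mu^{\bar\rho}$.

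For uniqueness, let $\mu'$ be any translation-invariant stationary distribution satisfying (i) and (ii). For each $\ell\in\lzb 1,N-1\rzb$, the pair $(\ind[\eta(x)\le\ell],\ind[\eta(x)\le\ell+1])_{x\in\Z}$ under $\mu'$ is a translation-invariant stationary coupling of the Bernoulli product measures $\nu^{\sum_{j\le\ell}\rho_j}$ and $\nu^{\sum_{j\le\ell+1}\rho_j}$ under the basic coupling of two TASEPs. Liggett's uniqueness theorem for such ordered couplings pins down the conditional law of the $(\ell+1)$-level indicator sequence given the $\ell$-level one, so $\mu'$ is inductively determined and must equal $\widecheck\mu^{\bar\rho}$. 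Extremality then follows from the ergodicity of $\widecheck\mu^{\bar\rho}$ under spatial translations, which is inherited from the Bernoulli property (ii) through the inductive queueing construction.
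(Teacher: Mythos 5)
The paper does not prove this theorem; it is cited from \cite{Ferrari-Martin-2007} and \cite{Amir_Angel_Valko11}, with the relevant queueing construction only recalled later in Section~\ref{sec:FM} (Theorem~\ref{thm:FM}). Your overall plan (induction, a queueing construction, an intertwining for stationarity, Liggett-based uniqueness) has the right shape, but the inductive construction itself is not the Ferrari--Martin one and, as written, cannot produce $\widecheck\mu^{\bar\rho}$. Ferrari and Martin add class $N$ by feeding the $(N-1)$-type configuration as the \emph{arrival} stream into a queue whose \emph{service} stream is a fresh i.i.d.\ Bernoulli of intensity $\sum_{j\le N}\rho_j$ (strictly above the total arrival intensity $r=\sum_{j<N}\rho_j$, guaranteeing stability), with unused services becoming class $N$ and no-service times becoming the new holes. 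You reverse the roles: vacancies of $\widetilde\eta$ (intensity $1-r$) are your services, and a fresh $\mathrm{Ber}(\rho_N/(1-r))$ stream $\zeta$ on $\Z$ is your arrivals. In any stable queue the departure intensity equals the arrival intensity, so your class-$N$ particles would have density $\rho_N/(1-r)\ne\rho_N$, violating (i); and stability itself requires $\rho_N/(1-r)<1-r$, i.e.\ $\rho_N<(1-r)^2$, which admissible vectors $\bar\rho$ (with $\rho_N\le 1-r$) need not satisfy. If instead you meant $\zeta$ to mark vacancies directly, the construction collapses to independent thinning of the hole set, which produces a product-over-levels measure and is known not to equal the multi-type stationary law.

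The Burke-type step used to check (i)--(ii) is also not justified. The inductive hypothesis gives only the single-site marginals and the Bernoulli law of each cumulative indicator sequence; it does not give that the vacancy pattern of $\widetilde\eta$ is independent of its class-label structure, and the stronger claim that after insertion the class-$N$ sites, remaining holes and classes $\le N-1$ are ``jointly independent'' would again force a product form that the true $\widecheck\mu^{\bar\rho}$ does not have (already for $N=2$ the measure is a nontrivial matrix-product measure). The intertwining that is supposed to yield stationarity is the real content of \cite[Thm.~2.1]{Ferrari-Martin-2007} and cannot be deferred to an unspecified ``finite check,'' especially since your map $D$ differs from theirs. For uniqueness, applying Liggett's theorem \cite{Liggett76} to each consecutive pair $(\ind[\eta\le\ell],\ind[\eta\le\ell+1])$ only identifies the pairwise laws; an additional argument is needed to recover the full joint law of all $N$ levels from these pairwise constraints. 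Finally, ``extremal among stationary measures'' does not follow automatically from translation-ergodicity; that implication needs its own justification.
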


 Theorem \ref{thm:TASEPprod} is not stated exactly in this form in either reference.  It can be proved with the techniques of Section VIII.3 of Liggett \cite{liggett1985interacting}. 
	
	\begin{lemma}[{\cite[Corollary 5.4]{Amir_Angel_Valko11}}]\label{lem:FU}
		Let $F:[-1,1] \rightarrow \{1,\dotsc,k, \infty\}$ be a  nondecreasing function and  $\lambda_j= \f{1}{2} {\rm Leb}\big(F^{-1}(j)\big)$, i.e.,  one-half the Lebesgue measure of the interval  mapped to the value $j\in\{1,\dotsc,k, \infty\}$.
		Then  the distribution of the $\{1,\dotsc,k, \infty\}$-valued sequence  $\{F(U_i)\}_{i\in\Z}$ is   the stationary measure $\widecheck\mu^{(\lambda_1,\dotsc,\lambda_k)}$ described in Theorem \ref{thm:TASEPprod} for  the $k$-type TASEP with jumps to the right. 
	\end{lemma}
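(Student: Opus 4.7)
The plan is to verify that the law of $\eta^F := \{F(U_i)\}_{i\in\Z}$ meets every hypothesis in the characterization of Theorem \ref{thm:TASEPprod} with $\rho_j = \lambda_j$; uniqueness then forces $\eta^F \sim \widecheck\mu^{(\lambda_1,\dotsc,\lambda_N)}$.

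Monotonicity of $F$ yields thresholds $-1\le c_1\le\dotsb\le c_N\le 1$ with the property that, up to the endpoints of the level sets (a set of $U_0$-measure zero since $U_0$ is uniform on $[-1,1]$), one has $F(u)\le\ell \iff u\le c_\ell$ and consequently $(c_\ell+1)/2 = \lambda_1+\dotsb+\lambda_\ell$. Translation invariance of $\eta^F$ is immediate from the translation invariance of the speed process. For the single-site marginal (i), uniformity of $U_0$ gives $P(\eta^F(0)=j) = \tfrac{1}{2}\operatorname{Leb}(F^{-1}(j)) = \lambda_j$. For the product structure (ii), the identity $\ind[\eta^F(x)\le\ell] = \ind[U_x\le c_\ell]$ reduces the claim to the two-class projection property of the speed process: for every $c\in[-1,1]$ the threshold indicator sequence $\{\ind[U_x\le c]\}_{x\in\Z}$ is i.i.d.\ Bernoulli with parameter $(c+1)/2$. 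This is a known output of \cite{Amir_Angel_Valko11}, obtained by identifying the class-$1$ particles of the $c$-thresholded configuration with those of the stationary two-class TASEP of density $(c+1)/2$, whose distribution is the i.i.d.\ Bernoulli product.

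Stationarity of $\eta^F$ under the $N$-type TASEP follows by pushing the continuous-class dynamics forward through $F$. The speed process is itself invariant under the continuous-class TASEP on $[-1,1]^\Z$, driven by common Poisson clocks, in which neighboring values $(u,v)$ swap at a ring on the left edge iff $u<v$. The coordinatewise map $F$ intertwines the two dynamics: if $F(u)<F(v)$ then necessarily $u<v$, and both dynamics swap, producing the matched output $(F(v),F(u))$; if $F(u)=F(v)$, any continuous swap is invisible after projection; and if $F(u)>F(v)$, neither dynamics swaps. Pushing the stationary continuous law forward under $F$ is therefore stationary for the $N$-type TASEP, completing the list of hypotheses of Theorem \ref{thm:TASEPprod}.

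The real content lies in the two-class projection property cited when verifying (ii); the rest of the argument is essentially bookkeeping on top of it. A minor caveat, easily dispensed with, is that $F$ may assign boundary points of a level set $F^{-1}(j)$ to an adjacent class, but since $U_x$ has a continuous distribution the discrepancy lives on a null set and is invisible in distribution.
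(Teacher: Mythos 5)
The paper does not give a proof of this lemma; it is quoted directly from \cite[Corollary~5.4]{Amir_Angel_Valko11}, so there is no in-house argument to compare against. Your reconstruction is sound and is essentially the natural one: verify translation invariance, the marginal condition (i), the Bernoulli projection condition (ii), and stationarity, then invoke the uniqueness asserted in Theorem~\ref{thm:TASEPprod}. The intertwining step is correct as stated: because $F$ is nondecreasing, $F(u)<F(v)$ forces $u<v$, so wherever the $N$-type dynamics swaps the continuous-valued dynamics swaps too; and when $F(u)=F(v)$ a continuous swap is invisible after projection. Hence $F$ semi-conjugates the $[-1,1]$-valued speed-process dynamics to the $N$-type dynamics pathwise under common clocks, and pushing a stationary law forward yields a stationary law.

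Two points deserve to be made explicit rather than left implicit. First, stationarity of the speed process under the $[-1,1]$-valued TASEP is itself a nontrivial theorem of \cite{Amir_Angel_Valko11} (their Theorem 1.5); you use it but do not cite it by name. Second, and more delicately, the ``two-class projection property'' you invoke for condition (ii) — that $\{\ind[U_x\le c]\}_{x\in\Z}$ is an i.i.d.\ Bernoulli sequence for each $c$ — is exactly the single-threshold instance of the lemma being proved. In \cite{Amir_Angel_Valko11} this is established in the course of constructing the speed process, logically prior to Corollary 5.4, so your argument is not circular; but a reader who did not already know that ordering could reasonably object. A self-contained version of your proof would either supply that input or point precisely to where in \cite{Amir_Angel_Valko11} it is proved independently of the general projection statement. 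With those citations made precise, the proof stands.
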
 
 For example, the case $k=1$ of  Lemma \ref{lem:FU} tells us that  to produce a particle configuration with Bernoulli distribution  $\nu^\rho$ from the TASEP speed process,  assign a particle to each site $x$ such that  $U_x\le2\rho-1$.   Lemma \ref{lem:FU} follows readily from Theorems \ref{thm:AAV} and  \ref{thm:TASEPprod} because the nondecreasing projection $F$ commutes with the pathwise dynamics. 

\begin{remark}[Jump directions] Throughout this Section \ref{sec:main} jumps in TASEP go to the right.  Later in Sections \ref{sec:fdd} and 
\ref{sec:tightness} we use the convention from \cite{Ferrari-Martin-2007} whereby TASEP jumps proceed left. This is convenient because then discrete  time in the queueing setting agrees with the order on $\Z$.  Notationally,  $\widecheck\mu^{\bar\rho}$ denotes the multiclass stationary measure under rightward jumps, as in Theorem~\ref{thm:TASEPprod} and Lemma~\ref{lem:FU} above, while 
  $\mu^{\bar\rho}$ will denote the stationary measure under leftward jumps. These  measures are simply reflections of each other (see Theorem~\ref{thm:FM}). 
\end{remark}

\subsection{The stationary horizon} \label{sec:SH} 
The stationary horizon (SH) is a process $G = \{G_\mu\}_{\mu \in \R}$ with values $G_\mu$ in the space $C(\R)$ of continuous $\R\to\R$ functions. $C(\R)$ has its Polish topology of uniform convergence on compact sets. The paths $\mu\mapsto G_\mu$ lie  in the Skorokhod space $D(\R,C(\R))$.   For each $\mu \in \R$, $G_{\mu}$ is a two-sided Brownian motion with diffusivity $\sqrt 2$ and drift $2\mu$. 
 With these conventions for the  diffusivity and drift, $G$  is the version of SH associated to the directed landscape and the KPZ fixed point, as developed in our previous paper \cite{Busa-Sepp-Sore-22arXiv}.  
The distribution of a $k$-tuple  $(G_{\mu_1},\dotsc,G_{\mu_k})$ can be realized as the  image of $k$  independent Brownian motions with drift. See Appendix~\ref{sec:stat_horiz} for a  description. 


	\subsection{Scaling limit of the speed process} \label{sec:scsp}
The space $\{0,1\}^\Z$ of TASEP particle configurations $\eta$ can be mapped bijectively  onto the space of  continuous  interfaces $f:\R\to\R$ such that $f(0)=0$, $|f(x)-f(x+1)|=1$ for all $x\in\Z$,  and $f(x)$  interpolates   linearly between integer points.  Define $\mathcal{P}:\{0,1\}^\Z\rightarrow C(\R)$  by stipulating that  on integers $i$ the image function $\mathcal{P}[\eta]$ is given by
	\begin{subequations} \label{eqn:TASEPh}
	\be
		\mathcal{P}[\eta](i)=
		\begin{cases}
			\sum_{j=0}^{i-1}(2\eta(j)-1), & i\in\N\\
			0, & i=0\\
			-\sum_{j={i}}^{-1}(2\eta(j)-1), & i\in -\N
		\end{cases}
		\ee
	and then extend $\mathcal{P}[\eta]$ to the reals by linear interpolation: 
	\be \text{for} \quad x\in \R\setminus \Z, \quad  \mathcal{P}[\eta](x)=(\ce x-x)\mathcal{P}[\eta](\fl x)+ (x-\fl x)\mathcal{P}[\eta](\ce x).
	\ee
	\end{subequations}
	TASEP can therefore be thought of as dynamics on continuous interfaces $f:\R \to \R$ such that for all $x \in \Z$, $f(x) \in\Z$ and  $f(x\pm1) \in \{f(x)-1,f(x) + 1\}$. When a particle at location $x$ lies immediately to the left of a hole at location $x + 1$, the interface  has a local maximum at location $x + 1$. When the particle changes places with the hole, the  local maximum becomes a local minimum.

	Let	$U=\{U_j\}_{j\in\Z}$ be the TASEP speed process and for $s\in\R$,  $\ind_{U < s}=\{\ind_{U_j< s}\}_{j\in\Z}$  a shorthand for the $\{0,1\}$-valued  sequence of indicators.   For each value of the centering $v \in (-1,1)$ and a scaling parameter $N\in\N$, use
	the mapping~\eqref{eqn:TASEPh} to define from the speed process a    $C(\R)$-valued process indexed by $\mu\in\R$: 
 \begin{equation} \label{H138}
     H^N_\mu(x) = H^{v,N}_\mu(x) =N^{-1/2}\,\mathcal{P}\big[\ind_{U \le v + \mu(1 -v^2) N^{-1/2}}\big]\Bigl(\f{2x}{1 - v^2}N\Bigr) - \f{2vx}{1-v^2}N^{1/2}, \qquad \mu, x\in\R.
 \end{equation}
		

Our main theorem is the process-level  weak limit of $H^{v,N} = \{H^{v,N}_\mu\}_{\mu \in \R}$.   The path space of $\mu\mapsto H^{v,N}_\mu$ is the Skorokhod space $D(\R,C(\R))$ of $C(\R)$-valued cadlag paths on $\R$, with its usual Polish topology. This is  discussed in Section~\ref{sec:D(R)}.  Here is our main result. 

	
	\begin{theorem}\label{thm:conv}
		Let $G$ be the stationary horizon. Then, for each $v \in (-1,1)$, as $N\rightarrow \infty$,  the  distributional limit $H^{v,N} \Rightarrow G$   holds on the path space  $D(\R, C(\R))$. 
		
	\end{theorem}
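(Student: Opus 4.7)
The natural scheme is the standard two-step: convergence of finite-dimensional distributions $(H^N_{\mu_1},\dots,H^N_{\mu_k}) \Rightarrow (G_{\mu_1},\dots,G_{\mu_k})$ as $C(\R)^k$-valued random variables for every $\mu_1<\dots<\mu_k$, combined with tightness of $\{H^N\}_{N\ge 1}$ in the Skorokhod space $D(\R,C(\R))$. The marginal case is essentially immediate from Lemma~\ref{lem:FU} applied with $F(u)=\ind_{u\le\mu N^{-1/3}}$: the indicator sequence $\{\ind[U_j\le\mu N^{-1/3}]\}_{j\in\Z}$ is i.i.d.\ Bernoulli of density $\rho_N=(1+\mu N^{-1/3})/2$, so the interface $\mathcal{P}[\ind_{U\le\mu N^{-1/3}}]$ is a $\pm 1$ random walk. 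Under the $N^{-1/3}$ vertical and $2N^{2/3}$ horizontal rescaling, Donsker's theorem turns the per-step variance $4\rho_N(1-\rho_N)\to 1$ into diffusivity $\sqrt 2$ on the $x$-scale, and the mean drift $2\rho_N-1=\mu N^{-1/3}$ into a linear slope $2\mu x$, matching the law of $G_\mu$.

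For joint convergence at $k$ parameters, I would route through the Ferrari--Martin queueing construction. Applying Lemma~\ref{lem:FU} with a $(k{+}1)$-valued step function $F$ assigning class $i$ to the interval $(\mu_{i-1}N^{-1/3},\mu_i N^{-1/3}]$, the joint law of $\big(\ind[U_j\le\mu_i N^{-1/3}]\big)_{i=1}^k$ is a measurable image of the multiclass stationary measure of Theorem~\ref{thm:TASEPprod} with class densities $\rho_{N,i}=(\mu_i-\mu_{i-1})N^{-1/3}/2$. Ferrari and Martin realize this measure as the output of iterated queueing operators applied to $k$ independent Bernoulli processes of the same densities, all within $O(N^{-1/3})$ of $1/2$. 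Donsker applied to each independent Bernoulli input, followed by passing to the limit through the discrete queueing operator via a continuous-mapping argument (the operator is built from running infima and suprema of walks, which are continuous functionals on path space), identifies the scaling limit as the image, under the continuous queueing map of Appendix~\ref{sec:stat_horiz}, of $k$ independent two-sided Brownian motions of diffusivity $\sqrt 2$ and drifts $2\mu_i$. By the construction of SH in Appendix~\ref{sec:stat_horiz} this image is exactly $(G_{\mu_1},\dots,G_{\mu_k})$.

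For tightness in $D(\R,C(\R))$ I would separately control the $C(\R)$-modulus of each slice $H^N_\mu$ (uniformly over $\mu$ in a compact set) and the cadlag jumps of $\mu\mapsto H^N_\mu$. The slice-wise estimate is a Kolmogorov-type fourth-moment bound on a nearly critical Bernoulli $\pm 1$ walk, uniform in $\mu$. In the $\mu$-direction the key identity is that for $x>0$ and $\mu>\mu'$,
$H^N_\mu(x)-H^N_{\mu'}(x)=2N^{-1/3}\cdot \#\{j : 0\le j<2xN^{2/3},\; U_j\in(\mu' N^{-1/3},\mu N^{-1/3}]\}$, a non-decreasing function of $\mu$ with increments of size $2N^{-1/3}$. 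Controlling its mean and variance via the multiclass stationary structure yields an Aldous-type criterion; marginal uniformity of each $U_j$ on $[-1,1]$ ensures a jump density of $O(1)$ per unit of $\mu$ on any compact $x$-window.

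The main obstacle is the queueing invariance principle underlying the joint convergence. The continuous queueing operator building SH is defined through nested Pitman-type transforms of $k$ input paths, and one must show it is almost everywhere continuous with respect to the product Wiener-with-drift law in a topology strong enough to push Donsker through. A secondary difficulty is that $D(\R,C(\R))$ is sensitive to simultaneous jumps at many spatial points $x$, so the Aldous criterion must be verified in the function-valued sense rather than pointwise in $x$; this requires uniform $x$-control of correlations among particles of intermediate classes in the multiclass stationary measure.
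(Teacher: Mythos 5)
Your overall decomposition (finite-dimensional convergence via the Ferrari--Martin queueing representation plus tightness in $D(\R,C(\R))$) matches the paper, and the finite-dimensional part is essentially the argument the authors give. One point you gloss over there: the claim that the queueing maps ``are continuous functionals on path space'' is not literally true, because they involve $\sup_{-\infty<x\le y}\{g(x)-f(x)\}$, a supremum over an unbounded half-line. Local uniform convergence of paths does not push through such a sup; the paper handles this via Lemma~\ref{lem:convprob}, which uses the strictly negative relative drift to show that, with probability tending to one, the prelimiting and limiting suprema are both attained on a common bounded window. Without that step your continuous-mapping argument has a hole.

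The more serious gap is in the tightness argument. You propose a slice-wise Kolmogorov bound in $x$ plus an Aldous criterion in $\mu$, claiming that ``marginal uniformity of each $U_j$ on $[-1,1]$ ensures a jump density of $O(1)$ per unit of $\mu$''. This bounds the \emph{expected} number of jumps in a $\mu$-interval, but says nothing about jump \emph{clustering}, which is the whole difficulty in $D(\R,C(\R))$. The paper instead shows that, uniformly in $N$, the random variable $\ClsJmp_N^{x_0,\mu_0}$ (minimal gap between distinct jumps of $\mu\mapsto\wt H^N_\mu$ in a compact $\mu$-window) satisfies $\Pp(\ClsJmp_N^{x_0,\mu_0}\le\delta)\le C\delta$, so that the Skorokhod modulus $\omega'$ vanishes. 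Proving this requires the ``TwoRare'' estimate (Lemma~\ref{lem:TR}): the probability that two of three consecutive classes appear in a spatial window of size $N^{2/3}$. That estimate rests on the \emph{reversibility} (Burke property, Lemma~\ref{lm:Burke}) and \emph{interchangeability} (Proposition~\ref{pr:q259}) of the tandem queues, which furnish independence of the queue lengths $\Qu_1,\Qu_2,\Qu_3$ and then a geometric tail bound. There is nothing analogous in your proposal, and a mean/variance control of the increment count will not by itself deliver anti-clustering of the jumps; in particular, under the Aldous criterion, a stopping time can position itself just before a cluster, and ruling this out requires exactly the jump-separation estimate that the paper develops. Your ``slice-wise'' bound is also not what is needed: once finite-dimensional convergence is proved, tightness of each slice in $C(\R)$ is automatic; what must be verified instead is the stochastic-continuity condition of Lemma~\ref{lem:conv} (Proposition~\ref{prop:SC} in the paper), which controls the full $\sup_{\mu_1,\mu_2\in(\mu-\delta,\mu+\delta]}d(\wt H^N_{\mu_1},\wt H^N_{\mu_2})$ via the analogous ``OneRare'' queueing estimate.
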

	
	The  proof of  Theorem \ref{thm:conv} is reached at the end of Section \ref{sec:tightness}. 
   As is typical,  the proof splits into two main steps: 
  (i) weak convergence of finite-dimensional distributions of  $H^{N}$ to the limiting object in Section \ref{sec:fdd}  and (ii) tightness of $\{H^{N}\}_{N\in\N}$ on $D(\R,C(\R))$ in Section \ref{sec:tightness}.   Both parts  use  the Ferrari-Martin  queueing representation of the  multitype stationary measures. 
  The first part shows that, in the limit, the queueing representation recovers  the queuing structure that defines the SH. 
	
	The   tightness of $\{H^{N}\}_{N\in\N}$  boils down to showing that, uniformly in $N$, $\mu\mapsto H^{N}_\mu$ does not have too many jumps on a compact interval. The main ingredients are the   \textit{reversibility} and \textit{interchangeability} of Markovian queues.  For a sequence of arrivals  $\arrv$ and  services $\srvv$, we write $Q(\arrv,\srvv)$ for the queue process, $D(\arrv,\srvv)$ for the departure process, and  $R(\arrv,\srvv)$ for the process of dual services.  Then  reversibility means that  the time-reversal of the  process $(D(\arrv,\srvv),R(\arrv,\srvv),{Q})$ has the same distribution as $(\arrv,\srvv,Q)$. Reversibility implies the Burke property for Markovian queues. Interchangeability is the property  $D(\arrv,\srvv^1,\srvv^2)=D(\arrv,R(\srvv^1,\srvv^2),D(\srvv^1,\srvv^2))$ where  $D(\arrv,\srvv^1,\srvv^2)$ is the departure process of queues $\srvv^1$ and $\srvv^2$ in tandem fed by the arrival process $\arrv$.  The queueing theory we use is covered  in Section \ref{sec:FM} and Appendix \ref{app:MM1}. 
	
	These two properties were used in \cite{Fan-Seppalainen-20} to construct the joint distribution of the Busemann process of exponential LPP, itself a key ingredient of the results in \cite{Busani-2021}.  In~\cite{Seppalainen-Sorensen-21b}, the authors used a continuous analogue of the same properties to describe the distribution of the Busemann process of Brownian LPP. It was observed in \cite{Busani-2021} that the Fan-Sepp\"al\"ainen construction in \cite{Fan-Seppalainen-20} can be obtained through an RSK-like procedure on random walks, named  \textit{stationary melonization} in \cite{Busani-2021}. Here,  the relevant version of  RSK (Robinson-Schensted-Knuth)  is an algorithm taking as input $N$ random walks $\srvv^1,\ldots,\srvv^N$ and returning $N$ non-intersecting/ordered paths, through  iterative application of a sorting map. 
	In that context,  the pair map   $(D,R)$ plays the role of the sorting map  
	and the interchangeability should be thought of as the isometry of the melonization procedure \cite{Biane-Bougerol-OConnell-2005,Directed_Landscape}. In contrast to the non-intersecting lines  output of  standard  RSK,  stationary melonization outputs  lines that agree pairwise  on a compact interval around the origin and branch off  outside of it. 
 

 \subsection{Beyond TASEP: general exclusion processes on $\Z$}
	
	 \cite{Amir_Angel_Valko11} conjectured that  an analogue of the TASEP speed process exists for ASEP, the exclusion process whose particles can jump to either of the two adjacent neighbors, but the symmetric case excluded. Assuming the conjecture, \cite{Amir_Angel_Valko11} derived properties of this putative process, including its  stationarity under the evolution. The existence question was recently settled in \cite{aggarwal2022asep}. In a related development, \cite{mart-20} constructed stationary distributions for multitype ASEP. 
  
  An analogue of our Theorem \ref{thm:conv} should hold for ASEP and even more generally for one-dimensional exclusion processes, provided the speed process or its analogue can be constructed.  In Section  \ref{sec:Asp} we take a step towards this extension, not  by constructing the speed process but by approaching the question from the other direction:  we  construct a stationary distribution for the exclusion process with continuum values whose projections are stationary distributions of multiclass particle processes. 

  In the next theorem we consider exclusion dynamics with a general jump kernel $p:\Z\times \Z\rightarrow [0,1]$  that satisfies the conditions of Section VIII.3 of Liggett \cite{liggett1985interacting}: 
translation invariance  $p(x,y)=p(0,y-x)$ and this  form of irreducibility: 
for each pair   $x,y\in\Z$ there exists $m\in\Z_+$
such that $p^{(m)}(x,y)+p^{(m)}(y,x)>0$ where $p^{(m)}$ is the $m$-step transition. 
  
  \begin{theorem}\label{exun}  
  There exists a random variable $W=\{W_i\}_{i\in\Z}\in [0,1]^\Z$ with uniform marginals   $W_i\sim \rm{U}[0,1]$ whose distribution is translation-invariant and  stationary under the generalized exclusion dynamics described below Theorem \ref{thm:AAV} but now using kernel $p$.  
{\rm(}The generator is given   in equation \eqref{L800}  in Section \ref{sec:Asp}.{\rm)}
If $F$ is an increasing function on $[0,1]$, then $F(W):=\{F(W_i)\}_{i\in\Z}$ is again a translation-invariant measure that is  stationary under these same  dynamics. Moreover, if $V\in[0,1]^\Z$ is  translation-ergodic with uniform marginals and its distribution is stationary under these dynamics, then $V\sim W$. 
  \end{theorem}

  
It is not a priori clear whether $W$ still contains information about the speeds of individual second class particles under the  general jump kernel $p$. However, it does follow  that if the   speed process $U^{\rm{spd}}$ exists and is stationary, as in \cite{Amir_Angel_Valko11,aggarwal2022asep},  there is a deterministic increasing function $\phi$ such that  $U^{\rm{spd}}\sim \phi(W)$ (Proposition \ref{NL} in Section \ref{sec:Asp}). For example, $\phi(v)=(1-2p)\frac{1+v}{2}$ in   ASEP with $p=p(x,x+1)$.	
  
  We conjecture that  the  stationary horizon  $G$  is a universal scaling limit of translation-invariant multiclass  stationary distributions.
	\begin{conjecture}\label{conj}
  In the setting of Theorem \ref{exun} fix a  suitable centering $v$.  Then there is a scaled version $H^{v,N}$ of the process  $H^v_\mu(x)=\mathcal{P}[\ind_{W\leq v + \mu}](x)$ such that $H^{v,N} \Rightarrow G$  on the path space  $D(\R,C(\R))$  as $N\to\infty$. 
	\end{conjecture}
	    
\section{Speed process and exceptional directions of the directed landscape}

A consequence of Theorem \ref{thm:conv} is that features of the TASEP speed process approximate, in distribution, certain geometrically relevant  features of the directed landscape (DL). Namely, 
(i)  the  set  $\Xi^{v,N}=\bigl\{  N^{1/2}\tfrac{U_i-v}{1-v^2}:  i\in\Z  \bigr\}$ of scaled and centered speed process values  approximates the set of exceptional directions of DL and (ii)  the suitably scaled and interpolated cumulative convoy associated to a speed process value is an approximation of the Busemann difference profile associated to the corresponding exceptional direction of DL.   At the end of this section  we formulate the  result (Theorem \ref{thm:Xi}) as the weak limit of a point measure based on the support $\Xi^{v,N}$, but a technical issue arises.  The set $\Xi^{v,N}$  is not discrete, and also the limiting set of exceptional directions of DL is  dense in $\R$.  Furthermore, the entire function space $C(\R)$ is a bounded set under the metric \eqref{d} below.    Hence these ingredients alone do not give us a point measure that is finite on bounded sets. To fix this  we add  a third component to the point measure, one whose almost sure continuity can be readily proved on the path space of the stationary horizon.  

  We begin with a brief description of  the directed landscape   and refer the reader to the papers  \cite{Busa-Sepp-Sore-22arXiv,Dauvergne-Virag-18,Dauvergne-Virag-21, Rahman-Virag-21} for more coverage.


\subsection{Directed landscape and its Busemann process}  

The directed landscape (DL) is a random continuous function $\Ll:\Rup \to \R$
on the domain $\Rup = \{(x,s;y,t) \in \R^4: s < t\}$ of time-ordered pairs of space-time points. 
 It arises as the scaling limit of various last-passage type  models in the KPZ universality class, and is expected to be a universal limit of such models.   DL satisfies 
\be \label{eqn:metric_comp}
\Ll(x,s;y,u) = \sup_{z \in \R}\{\Ll(x,s;z,t) + \Ll(z,t;y,u)\}
\ee
 for $(x,s;y,u) \in \Rup$ and $t \in (s,u)$, so it is  a directed LPP process.   It can also be viewed as a signed ``directed metric'', though the triangle inequality is reversed.  But it is still profitable to define geodesics.  A continuous path $g:[s,t] \to \R$  is
 a \textit{geodesic} if   every  partition $s = t_0 < t_1 < \cdots < t_k = t$ satisfies 
\[
\Ll(g(s),s;g(t),t) = \sum_{i = 1}^k \Ll(g(t_{i - 1}),t_{i - 1};g(t_i),t_i).
\]
  For  fixed $(x,s;y,t) \in \Rup$, there exists almost surely a unique geodesic between $
(x,s)$ and $(y,t)$ \cite[Sect.~12--13]{Directed_Landscape}.  A \textit{semi-infinite geodesic} with initial point  $(x,s) \in \R^2$ is a continuous path $g:[s,\infty) \to \R$ such that $g(s) = x$ and   the restriction of $g$ to each bounded interval $[s,t]\subseteq[s,\infty)$ is a geodesic between $(x,s)$ and $(g(t),t)$. It has {\it direction} $\dir \in \R$ if $\lim_{t \to \infty} g(t)/t=\dir$.

 Information about the geodesics of DL is contained in its \textit{Busemann process}  
 \[
\bigl\{\W_{\dir \sig}(x,s;y,t): \dir \in \R, \,  \sigg \in \{-,+\}, \, (x,s),(y,t) \in \R^2\bigr\}  . 
\]
This is a real-valued stochastic process indexed by a pair of (not necessarily time-ordered) space-time points $(x,s),(y,t) \in \R^2$, a direction $\dir \in \R$, and a sign $\sigg \in \{-,+\}$.   We summarize properties of this process from \cite{Busa-Sepp-Sore-22arXiv}. The statements all hold with probability one, across all the values of the parameters in question. 

 For each fixed $\xi\sigg$,   $\W_{\dir \sig}\in C(\R^4,\R)$. In the topology of $C(\R^4,\R)$ of uniform convergence on compact sets, 
 $\dir\mapsto\W_{\dir +}$ is right-continuous and $\dir\mapsto\W_{\dir -}$  left-continuous. The two functions $\W_{\dir\pm}$ agree for all but a countable dense subset $\DLBusedc$ of \textit{exceptional directions} $\dir$ of DL: 
\be\label{DL90} 
 \DLBusedc  =\{   \dir \in \R :  \exists (x,s),(y,t) \in \R^2 \text{ such that }  \W_{\dir -}(x,s;y,t) \neq \W_{\dir +}(x,s;y,t)\}.
 \ee 
 A fixed $\dir$ is never exceptional: $\Pp(\xi\in \DLBusedc)=0$ $\forall\xi\in\R$. 
 
The set  $\DLBusedc$ of exceptional directions lies at the heart of the uniqueness and coalescence of semi-infinite geodesics in DL (Theorem 2.5 in \cite{Busa-Sepp-Sore-22arXiv}).    For $\xi\notin\DLBusedc$,  all semi-infinite geodesics in  direction $\xi$ coalesce and, outside of a Lebesgue-null set of initial space-time points, the $\xi$-directed semi-infinite geodesic is unique.    By contrast, if $\xi\in\DLBusedc$, then from each initial point there are at least two $\xi$-directed semi-infinite geodesics that eventually separate and never meet again.    These geodesics form at least  two distinct coalescing families of $\xi$-directed semi-infinite geodesics.

For our purposes it is enough to consider the Busemann process $\W_{\dir \sig}(x,t;y,t)$ restricted to a fixed  time level $t\in\R$.  $\W$ is stationary and mixing under every translation of the space-time $\R^2$ so the choice of $t$ is arbitrary.   The connection between $\Ll$ and $\W$ is the Busemann limit:
for all $\dir \in \R$, $t \in \R$, $x < y$ in $\R$, and any sequence $(z_n,u_n)_{n\in\N}$ in $\R^2$ such that   $u_n \to \infty$ and $z_n/u_n \to \dir$ as $n \to \infty$,
 \be\label{W45}    \begin{aligned}
    \W_{\dir -}(y,t;x,t) &\le \liminf_{n \to \infty}  \bigl[ \Ll(y,t;z_n,u_n) - \Ll(x,t;z_n,u_n) \bigr]  \\  &\le \limsup_{n \to \infty}  \bigl[ \Ll(y,t;z_n,u_n) - \Ll(x,t;z_n,u_n) \bigr] \le \W_{\dir +}(y,t;x,t).  
    \end{aligned}\ee
    For $\xi\notin\DLBusedc$ the extreme left and right members coincide and the limit holds. 

The set of exceptional directions is the focus of our study. Define the difference profile 
    \be \label{fsdir}
J_{\dir}(x) = \W_{\dir +}(x,t;0,t) - \W_{\dir -}(x,t;0,t) \quad\text{ for } x\in\R, 
\ee
an identically zero function unless $\dir\in\DLBusedc$.   
$J_\xi$ is a nondecreasing function with $J_\dir(0)=0$.  For all choices of $t\in\R$,  $\dir\in\DLBusedc$  is equivalent to   $J_\xi(x)\nearrow\infty$
as $x\nearrow\infty$.  The realization of $J_\dir$ varies from one choice of $t$ to the next, but for each $t$ the random set of exceptional directions $\dir$ such that  $J_\dir\not\equiv0$ is the same $\DLBusedc$.   
Under Palm conditioning on the event $\dir\in\DLBusedc$,  $J_\xi$ vanishes on a random open neighborhood   $(-\bck{\tau}_{\!\!\dir} , \tau_\xi)$ around $x=0$, and beyond this interval, $x\mapsto J_\xi(x)$ for $x\ge\tau_\xi$ and $x\mapsto -J_\xi(-x)$ for $x\ge\bck{\tau}_{\!\!\dir}$  are two independent copies of Brownian local time \cite[Theorem 8.1]{Busa-Sepp-Sore-22arXiv}. 

 For $a>0$ and $\dir\in\R$ let 
\[  \tau_{a,\dir}=\inf\{ x>0:   J_{\dir}(x) \vee [-J_{\dir}(-x)]   
\ge a \}. \] 
  Thus $\tau_{a,\dir}<\infty$ iff $\dir\in\DLBusedc$.  


    Define the following point measure on $\R\times\R_+\times C(\R)$: 
\be\label{W68} 
\Lambda_a=\sum_{ \dir\tsp\in\tsp\DLBusedc} \delta_{(\dir\,, \, \tau_{a,\dir}\,,\, J_\dir)} .  
\ee
The moment bound  in Theorem \ref{thm:SH10}\ref{itm:exp} implies that $\Lambda_a$ is almost surely a locally finite point measure.   We regard it as an element of the space $\M(\R\times\R_+\times C(\R))$ of locally finite Borel measures on $\R\times\R_+\times C(\R)$. This space is endowed with its Polish vague 
topology \footnote{The vague topology is defined by integration against bounded continuous test functions with bounded support. This is the terminology of Kallenberg \cite{kall-17-book}. Daley and Vere-Jones \cite{dale-vere-08}  reserve the term vague topology for locally compact spaces. In their language, $\M(\R\times\R_+\times C(\R))$ is a space of boundedly finite Borel measures with the $w^\#$ (weak-hash) topology.}.  
Since our result involves only the distribution of $\Lambda_a$, the choice of $t$ is immaterial and we omit it from the notation.

The connection between the TASEP speed process and the DL Busemann process goes through SH. 
For each $t \in \R$,  the following equality in distribution holds between random elements of the Skorokhod space $D(\R,C(\R))$:
\be\label{WG} 
\{\W_{\dir +}(\aabullet,t;0,t)\}_{\dir \in \R} \deq \bigl\{G_{\dir}(\aabullet) \bigr\}_{\dir \in \R},
\ee
where $G$ is the version of the stationary horizon described in Section \ref{sec:SH} and Appendix~\ref{sec:stat_horiz},   with diffusivity $\sqrt 2$ and drifts $2\xi$. In particular, in Theorem \ref{thm:conv} we can replace the limit $G$ with $\{\W_{\dir +}(\aabullet,t;0,t)\}_{\dir \in \R}$. It is in this sense that 	Theorem \ref{thm:conv} yields Theorem \ref{thm:Xi} below as a corollary. 


\subsection{Scaled and centered speed process values and their convoys} 

 We turn to discuss the approximating objects from the speed process.   Following \cite{Amir_Angel_Valko11}, for a given $m\in\Z$, call the index set $\C_m=\{ i\in\Z:  U_i=U_m\}$ the \textit{convoy} 
 of $U_m$.    By \cite[Theorem 1.8]{Amir_Angel_Valko11}, conditional on the value of $U_m$,  $\C_m-m$ is  bi-infinite and in fact a zero-density  renewal process.  
 
  Fix a centering $v\in(-1,1)$.    Define the difference function 
  \be\label{Jv} 
  J^{v, N}_{\mu}(x)  =  H^{v, N}_{\mu}(x) - H^{v, N}_{\mu-}(x), \qquad \mu,x\in\R. 
\ee  
 Let $\Xi^{v, N}$ be the set of   jump locations of $H^{v, N}_\bbullet$, in other words, the set of $\mu$ such that $J^{v, N}_{\mu} $ is not the identically zero function: 
   \be\label{Xi800} \begin{aligned}
\Xi^{v, N}&=\{  \mu\in\R:   H^{v, N}_{\mu-}\ne  H^{v, N}_{\mu}\} 
 =\bigl\{  N^{1/2}\tfrac{U_m-v}{1-v^2}:  m\in\Z  \bigr\}  . 
\end{aligned}\ee
The definition \eqref{H138} of $H^{v, N}_\mu$  shows  that  $\mu$ is a jump point of $H^{v, N}_{\bbullet}$ iff $U_m= v+(1-v^2)\mu N^{-1/2}$
for some $m\in\Z$.  This gives the  second equality above.  
Thus either $\mu\notin\Xi^{v, N}$ in which case $J^{v, N}_{\mu} $ is identically zero, or 
  \be\label{Jv3} 
  \text{for }  \mu=N^{1/2}\tfrac{U_m-v}{1-v^2} \,, \quad   J^{v, N}_{\mu}(x)  =    
   \begin{cases} 
   2N^{-1/2}  \ddd\sum_{i=0}^{{2x}{(1-v^2)^{-1}}N-1}
   \tsp\ind_{U_i= U_m}, &x\ge 0  \\[8pt] 
  -2N^{-1/2}  \ddd\sum_{i={2x}{(1-v^2)^{-1}}N}^{-1}  
   \tsp\ind_{U_i= U_m}, &x<0. 
  \end{cases}   
\ee
The sums on the right are exact only when the summation limits are integers. Otherwise the precise formula requires  the interpolation done in \eqref{eqn:TASEPh}.   The point is to illustrate that a nonzero function $J^{v, N}_{\mu}$ is the continuously interpolated cumulative convoy of the speed process value $v+(1-v^2)\mu N^{-1/2}\in\{U_i:i\in\Z\}$. 



By definition,  $\Xi^{v, N}$ is   a  \textit{set} and not a sequence indexed by $m$.  No repetition among the elements of $\Xi^{v, N}$ is intended, even though  every particular member of the second  formulation in \eqref{Xi800} appears  for infinitely many  distinct $m$-values.  

$\Xi^{v, N}$ is a dense subset of the interval $[-N^{1/2}\tfrac{1+v}{1-v^2}, \,N^{1/2}\tfrac{1-v}{1-v^2}]$  and hence not suitable as the support of a random point measure. To remedy this we add a second component to each point  that distributes the points  sparsely enough across a half-plane. 
 For $a>0$ define 
\be\label{Xi808} 
\sigma^{v, N}_{\mu,a}=\inf\bigl\{ x>0:   [H^{v, N}_{\mu}(x) - H^{v, N}_{\mu-}(x)] \vee [H^{v, N}_{\mu-}(-x) - H^{v, N}_{\mu}(-x)] \, \ge \,  a\bigr\} .
\ee
In terms of the speed process,   in the same approximate sense  as in \eqref{Jv3}, 
\be\label{Xi818}   
\text{for }  \mu=N^{1/2}\tfrac{U_m-v}{1-v^2}, \quad 
\sigma^{v, N}_{\mu,a} = \inf \Bigl\{  x>0:       \sum_{i=0}^{\frac{2x}{1-v^2}N-1} \!\!\!\!\ind_{U_i= U_m} 
\bigvee     \sum_{i=\frac{-2x}{1-v^2}N}^{-1} \!\!\!\!\ind_{U_i= U_m}\ge \tfrac12a N^{1/2}   \Bigr\} .
\ee 
 In particular, $\sigma^{v, N}_{\mu,a}<\infty$ iff $\mu\in\Xi^{v,N}$ iff $v+(1-v^2)\mu N^{-1/2}$ is among the speed process values $\{U_i:i\in\Z\}$. 
Define the following simple point measure on $\R\times\R_+\times C(\R)$: 
\be\label{Xi816}  \Lambda^{v, N}_a = \sum_{\mu\tspa\in\tspa\Xi^{v,N}} \delta_{(\mu,  \,\sigma^{v, N}_{\mu,a},\,   J^{v, N}_{\mu}  )} . \ee 
$ \Lambda^{v, N}_a$ is a locally finite point measure because  a bound $\sigma^{v, N}_{\mu,a}\le M$ bounds the number of terms in the sums in \eqref{Xi818}, and hence only finitely many distinct speed process values can appear.    As a measurable function of the speed process $\{U_i\}$,  $ \Lambda^{v, N}_a$ is  a random  element of the space $\M(\R\times\R_+\times C(\R))$. 

We can now state the theorem.  The limit measure $\Lambda_{a}$ is the one from \eqref{W68}. 

\begin{theorem}\label{thm:Xi}   Fix $v\in(-1,1)$.  Then for all 
$a>0$,  we have the distributional limit   $\Lambda^{v, N}_{a} \Rightarrow \Lambda_{a} $ as $N\to\infty$, in the vague topology of  the  space $\M(\R\times\R_+\times C(\R))$. 
\end{theorem} 
\begin{proof}
 Recall a basic fact of weak convergence: suppose that  $\mathcal X$ and $\mathcal Y$ are metric spaces,   $h:\mathcal X\to\mathcal Y$ is a Borel function with discontinuity set $\mathcal D$,  $X_n\Rightarrow X$ are $\mathcal X$-valued  random variables, and $P(X\in \mathcal D)=0$. Then $h(X_n)\Rightarrow h(X)$.

 We apply this fact to  the weak limit of  Theorem~\ref{thm:conv} and the point measures $\Lambda^{v, N}_{a}$ and $\Lambda_{a} $ as  functions on the path space. The auxiliary material used here is in Appendix \ref{sec:DSH}. First restrict the path space $D(\R, C(\R))$ to the smaller closed subspace $D_{SH}$ defined in \eqref{DSH7} that takes advantage of the monotonicity satisfied by the processes $H^{v, N}$ and $G$.  The distributions of $H^{v, N}$ and $G$ are supported by $D_{SH}$. Point measures  $\Lambda^{v, N}_{a}$ and $\Lambda_{a} $ are both instances of the general definition 
 \eqref{La87} on the space $D_{SH}$. Let $\mathcal D_a$ be the discontinuity set  of $\Lambda_a: D_{SH} \to\M(\R\times\R_+\times C(\R))$. The first inequality below comes from Lemma~\ref{lm:psi87} applied to $\mathcal Z=\Q$, the second comes because $\mu \mapsto G_\mu(q)$ is a pure jump process (Theorem~\ref{thm:SH10}\ref{itm:SH_j}), and the last equality comes because $G_{\mu_2}(q) - G_{\mu_1}(q)$ has no nonzero atoms (Theorem~\ref{thm:SH10}\ref{SHrelfect} and \ref{itm:SHdist}): 
\begin{align*}
\Pp(G \in \mathcal D_a) &\le \Pp\Big(\bigcup_{q\tsp\in\tsp\Q} \bigl\{\exists \mu\in\R \text{ such that }   |G_\mu(q)-G_{\mu-}(q)|=a\bigr\}\Big)\\
&\le \Pp\Big(\bigcup_{q\tsp\in\tsp\Q} \bigcup_{\substack{\mu_1 < \mu_2 \\\text{ both in }\Q}} \bigl\{\tspb |G_{\mu_2}(q)-G_{\mu_1}(q)|=a\tspa\bigr\}\Big) = 0. \qedhere
\end{align*}
\end{proof}

The theorem gives a precise meaning to the notion that the  scaled and centered speed process values approximate  the exceptional directions of DL and in the limit the convoys converge to Busemann difference profiles.   Since this theorem is inherited from Theorem \ref{thm:conv},  the choice of centering $v\in(-1,1)$  (again) vanishes in the limit.

\section{Finite-dimensional convergence} \label{sec:fdd}

We turn to the proof of Theorem~\ref{thm:conv}.  

\subsection{The space $D(\R,C(\R))$}\label{sec:D(R)}	

	
	$C(\R)$ is  the space of continuous functions on the real line equipped with the complete separable  metric
	\begin{align} \label{d}
		d(f,g)=\sum_{n=1}^{\infty}2^{-n}\frac{d_n(f,g)}{1+d_n(f,g)}
	\end{align} 
where \begin{align} \label{dn}
		d_n(f,g)=\sup_{x\in[-n,n]}|f(x)-g(x)|.
	\end{align} 	
	  Since    $d_n(f,g)\leq d_{n+1}(f,g)$, we have the following useful bound: 
	\begin{align}\label{maprox}
		d(f,g)\leq d_n(f,g)+2^{-n} \quad \forall n\in\N.
	\end{align}
The space $D(\R,C(\R))$ is the space of cadlag functions $\R \to C(\R)$, equipped with Skorokhod topology.

We observe why the path $\mu\mapsto H^N_\mu$ defined in \eqref{H138} lies in $D(\R,C(\R))$. Restriction of $x\mapsto H^N_\mu(x)$ to a  bounded interval $[-x_0, x_0]$ is denoted by  $H^{N,x_0}_\mu=H^N_\mu\vert_{[-x_0, x_0]}$.  Then note that for $\mu<\rho$, $H^{N,x_0}_\mu\ne H^{N,x_0}_\rho$	if and only if $U_j\in(v + \mu(1 - v^2) N^{-1/2},v  + (1 - v^2)\rho N^{-1/2}]$ for some  
$j\in\lzb\,\fl{-\,\f{2x_0}{1 - v^2} N}, \ce{\f{2x_0}{1 - v^2} N}\,\rzb$.  Since this range of indices is finite, for each $\mu \in \R$ and $x_0 > 0$ there exists $\ve>0$ such that $H^{N,x_0}_\mu= H^{N,x_0}_\rho$ for $\rho\in[\mu, \mu+\ve]$ and $H^{N,x_0}_\rho= H^{N,x_0}_\sigma$ for $\rho, \sigma\in[\mu-\ve, \mu)$. 

\subsection{Ferrari-Martin representation of multiclass measures} \label{sec:FM}

This section describes the queueing construction of stationary multiclass measures from \cite{Ferrari-Martin-2007}. We use the convention of \cite{Ferrari-Martin-2007} that TASEP particles jump to the left rather than to the right, because this choice leads to the more natural queuing set-up where time flows on $\Z$ from left to right. This switch is then accounted for when we apply the results of this section.  

\subsubsection{Queues with a single customer stream}
	Let $\Qs_1:=\{1,\infty\}^\Z$ 
	be the space of configurations of particles on $\Z$ with the following interpretation:  a configuration  $\bq{x}=\{x(j)\}_{j\in\Z}\in \Qs_1$  has a particle at time $j\in\Z$ if $x(j)=1$, otherwise   $\bq{x}$ has a hole at time $j\in\Z$.  
	Let $\arrv,\srvv\in\Qs_1$. Think of $\arrv$ as arrivals of customers to a queue, and of $\srvv$ as the available  services in  the queue. For $i\le j\in \Z$ let $a^{\leq1}[i,j]$  be the number of customers, that is, the number of $1$'s in $\bq{a}$, that  arrive to the queue during time interval $[i,j]$. Similarly let $s[i,j]$ be the number of services available  during time interval $[i,j]$. The queue length at time $i$ is then given by 
	\begin{equation}\label{Q}
		Q_i=\sup_{j: j\leq i}\big(a^{\leq1}[j,i]-s[j,i]\tspb\big)^+.
	\end{equation}
In principle this makes sense for arbitrary sequences $\arrv$ and $\srvv$ if one allows infinite queue lengths $Q_i=\infty$. However, in our treatment $\arrv$ and $\srvv$ are always such that 
queue lengths are finite.  We will not repeat this point in the sequel.

	The departures from the queue come from the mapping $\depv=D(\arrv,\srvv):\Qs_1\times\Qs_1 \rightarrow \Qs_1$,  given by 
	\begin{equation}\label{eq6}
		\dep(i)=
		\begin{cases}
		1 & \text{$s(i)=1$ and either $Q_{i-1}>0$ or $a(i)=1$},\\
		\infty & \text{otherwise}.
		\end{cases}
	\end{equation}
	In other words, a customer leaves the queue at time $i$ (and $d(i)=1$) if there is a service  at time $i$ and either the queue is not empty or a customer just arrived at time $i$.
			The sequence $\bq{u}:=U(\arrv,\srvv)$ of unused services  is given by a mapping  $U:\Qs_1\times\Qs_1\rightarrow \Qs_1$  defined by 
	\begin{equation}\label{defU}
	u(j)=
	\begin{cases}
	1 & \text{if $\srv(j)=1$,  $Q_{j-1}=0$, and $\arr(j)=\infty$},\\
	\infty & \text{otherwise}.
	\end{cases}
	\end{equation}
	Last, we define the map $R:\Qs_1\times\Qs_1\rightarrow \Qs_1$ as $\bq{r}=R(\arrv,\srvv)$ with 
	\begin{equation}\label{defR} 
		r(j)=
		\begin{cases}
		1 & \text{ if either $a(j)=1$ or $u(j)=1$},\\
		\infty & \text{ if $a(j)=u(j)=\infty$}.  
		\end{cases}.
	\end{equation}

Extend the departure operator $D$ to  queues in tandem.   Let  $D^1(\bq{x})=\bq{x}$ be the identity, and for $n\ge 2$,  
\begin{equation}\label{eq18}
\begin{aligned}
D^2(\bq{x}_1,\bq{x}_2)&=D(\bq{x}_1,\bq{x}_2)\\
D^3(\bq{x}_1,\bq{x}_2,\bq{x}_3)&=D\big(D^2(\bq{x}_1,\bq{x}_2),\bq{x}_3\big)\\
&	\ \; \vdots \\
D^n(\bq{x}_1,\bq{x}_2,\dotsc,\bq{x}_n)&=D\big(D^{n-1}(\bq{x}_1,\bq{x}_2,\dotsc,\bq{x}_{n-1}),\bq{x}_n\big).
\end{aligned}
\end{equation}
We may omit the superscript and simply write $D(\bq{x}_1,\bq{x}_2,\dotsc,\bq{x}_n)$.

\medskip


	\subsubsection{Queues with priorities}
	Now consider queues with customers of different classes.  For $m\in\N$, let $\Qs_m:=\{1,2,\dotsc,m,\infty\}^\Z$ be the space of configurations of particles on $\Z$ with classes in $\lzb1,m\rzb=\{1,2,\dotsc,m\}$. A lower label indicates higher class and, as before,  the value $\infty$ signifies an empty time slot.  To illustrate the notation for  an arrival sequence $\arrv\in \Qs_m$,   the  value  $a(j)=k\in\lzb1,m\rzb$ means that    a customer of class $k$ arrives at time $j\in\Z$,  while   $a(j)=\infty$ means   no  arrival  at time $j$. Define 
	\begin{equation*}
		a^{\leq k}[j]=
		\begin{cases}
		1 &\text{if $a(j)\leq k$},\\
		0 &\text{if $a(j)>k$}.
		\end{cases}
	\end{equation*}
	 Consistently with earlier definitions,  $a^{\leq k}[i,j]=\sum_{l=i}^{j}a^{\leq k}[l]$ is the number of customers in classes $\lzb1,k\rzb$ that arrive to the queue in the time interval $[i,j]$. Let $\srvv\in \Qs_1$ be the sequence of available services. The number of customers in classes $\lzb1,k\rzb$ in the queue at time $i$ is then 
	\begin{equation*}
		Q^{\leq k}_i(\arrv,\srvv)=\sup_{j: j\leq i}\big(a^{\leq k}[j,i]-s[j,i]\big)^+, \qquad i\in \Z.
	\end{equation*}
	 The multiclass departure map $\depv = F_m(\arrv,\srvv):\Qs_m\times \Qs_1 \rightarrow \Qs_{m+1}$ is defined so that  customers of  higher class (lower label) are served first.  These are the rules: 
	\begin{equation}\label{fm}
		\begin{cases}
		d(i)\leq k & \text{for $k\in\lzb1,m\rzb$ if $s(i)=1$ and either $Q^{\leq k}_{i-1}>0$ or $a(i)\leq k$}, \\
		d(i)=m+1 & \text{if $s(i)=1$,  $Q^{\leq m}_{i-1}=0$, and $a(i)= \infty$},\\
		d(i)=\infty &\text{if }  s(i)=\infty .
		\end{cases}
	\end{equation}
	The map $F_m$ works as follows.  The queue is fed with arrivals $\arrv\in\Qs_m$ of customers in classes $1$ to $m$. Suppose a service is available at time $i\in\Z$ ($s(i)=1$). Then  the customer of the highest class (lowest label in $\lzb1,m\rzb$)   in the queue at time $i$,  or  just arrived at time $i$, is served at time $i$, and its label becomes the value of $d(i)$. If no customer arrived at time $i$ ($a(i)=\infty$) and the queue is empty ($Q^{\leq m}_{i-1}=0$), then the unused service  $s(i)=1$ is converted into a departing  customer of class $m+1$:  $\dep(i)=m+1$. If there is no  service available at time $i\in\Z$ ($s(i)=\infty$), then no customer leaves at time $i$ and  $\dep(i)=\infty$.
	
	In particular, for $m=1$, the output    $\depv=F_1(\arrv,\srvv)$ satisfies 
	\be\label{Q670} 
	d(i)=\begin{cases} 1, &D_i(\arrv,\srvv)=1\\ 2, &U_i(\arrv,\srvv)=1 \\ \infty, &s(i)=\infty.   \end{cases}
	\ee
 	
	For  $n\in\N$  define the space  $\ml_n=\Qs_1^n=\{1,\infty\}^{\Z\times \{1,\dotsc,n\}}$ of $n$-tuples of sequences. Let $\bar{\lambda}=(\lambda_1,\dotsc,\lambda_n)\in(0,1)^n$ be a parameter vector such that  $\sum_{r=1}^n \lambda_r\le1$. Define  the product measure $\nu^{\bar{\lambda}}$ on  $\ml_n$ so that if   $\bar{\bq{x}}=(\bq{x}_1,\dotsc,\bq{x}_n)\sim\nu^{\bar{\lambda}}$ then the sequences  $\bq{x}_k$ are independent and  each   $\bq{x}_k$ has the i.i.d.\ product  Bernoulli distribution  with intensity $\sum_{i=1}^{k}\lambda_i$.  From this input we define a new process $\bar{\bq{v}}=(\bq{v}_1,\dotsc,\bq{v}_n)$ such that each $\bq{v}_m\in \Qs_m$   by the iterative  formulas 
	\begin{equation}\label{eq1}
		\begin{aligned}
		\bq{v}_1&=\bq{x}_1 \qquad\text{and} \\
		\bq{v}_m&=F_{m-1}(\bq{v}_{m-1},\bq{x}_m) \qquad \text{for } \ m= 2, \dotsc,n.
		\end{aligned}
	\end{equation}
	We denote this map  by $\bar{\bq{v}}=\Vmap(\bar{\bq{x}})=(\Vmap_1(\bar{\bq{x}}),\dotsc,\Vmap_n(\bar{\bq{x}}))$. For a vector $\bar{\lambda}=(\lambda_1,\dotsc,\lambda_n)$ and $\bar{\bq{x}}\sim \nu^{\bar{\lambda}}$, define the distribution  $\mu^{\bar{\lambda}}$ as the image of $\nu^{\bar{\lambda}}$ under this map:  
	\begin{equation}\label{mu}
\mu^{\bar{\lambda}}=\nu^{\bar{\lambda}}\circ \Vmap_n^{-1}  \ \iff \ 	\Vmap_n(\bar{\bq{x}})\sim 	\mu^{\bar{\lambda}}.
	\end{equation}
	\begin{theorem}[\cite{Ferrari-Martin-2007}\label{thm:FM}, Theorem 2.1]  
 For each $m\in\lzb1,n\rzb$, 
		the distribution  of $\bq{v}_m$   under $\mu^{\bar{\lambda}}$ is 
    the unique translation-ergodic
  stationary distribution of the $m$-type TASEP on $\Z$ with leftward jumps and  with density $\lambda_r$ of particles of class $r\in\lzb1,m\rzb$. 
   The distribution of the reversed configuration $\{\bq{v}_m(-i)\}_{i \in \Z}$ is the unique  distribution $\widecheck\mu^{\bar\lambda}$ described in Theorem \ref{thm:TASEPprod}, in other words, the unique translation-ergodic stationary distribution of the $m$-type TASEP on $\Z$ with rightward  jumps, with density $\lambda_r$ of particles of class $r \in \lzb 1,m \rzb$. 
\end{theorem}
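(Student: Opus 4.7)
The strategy is to verify that $\mu^{\bar\lambda}$ satisfies all the hypotheses of the uniqueness statement in Theorem~\ref{thm:TASEPprod}, up to the spatial reflection that switches leftward and rightward jumps. Concretely I need three things: translation-invariance, the marginal intensities (i) and Bernoulli projections (ii) from that theorem, and stationarity under the $m$-type TASEP dynamics. Translation-invariance is immediate because each map $F_{m-1}$ commutes with translations of $\Z$ and the input measure $\nu^{\bar\lambda}$ is translation-invariant. Everything else I would prove by induction on $n$, with the base case $n=1$ being trivial since $\bq{v}_1=\bq{x}_1\sim\nu^{\lambda_1}$ is the classical Bernoulli product stationary measure for basic TASEP.

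For the marginal structure (i)--(ii), the crucial observation is that in the priority queue $F_{m-1}$, class-$(\le k)$ customers are served with absolute priority over higher-class customers, so their arrivals-to-departures dynamics is identical to that of a single-class M/M/1-type queue in which the higher-class customers are invisible. Unpacking the definition \eqref{fm}, this gives $\ind[\bq{v}_m(\cdot)\le k]=D\bigl(\ind[\bq{v}_{m-1}(\cdot)\le k],\,\bq{x}_m\bigr)$ as processes, for every $k\le m-1$, while $\ind[\bq{v}_m\ne\infty]=\ind[\bq{x}_m=1]$ directly from the last line of \eqref{fm}. Assuming inductively that $\ind[\bq{v}_{m-1}\le k]\sim\nu^{\rho_k}$ where $\rho_k=\sum_{j=1}^k\lambda_j$, and noting that $\bq{x}_m\sim\nu^{\rho_m}$ with $\rho_k<\rho_m$, the Burke theorem for Markovian queues (a consequence of the reversibility recorded in Section~\ref{sec:FM} and Appendix~\ref{app:MM1}) yields $\ind[\bq{v}_m\le k]\sim\nu^{\rho_k}$. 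Together with the identity $\ind[\bq{v}_m\le m]=\bq{x}_m\sim\nu^{\rho_m}$, this establishes (ii), and (i) follows by taking differences $P(\bq{v}_m(i)=j)=\rho_j-\rho_{j-1}=\lambda_j$.

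For stationarity under the $m$-type TASEP dynamics I would argue inductively by reducing to the two-class case. By the inductive hypothesis, the class-$(\le m-1)$ portion of $\bq{v}_m$ — which by construction equals $\bq{v}_{m-1}$ before applying $F_{m-1}$ — has distribution $\mu^{(\lambda_1,\dotsc,\lambda_{m-1})}$, stationary for $(m-1)$-type TASEP. On the other hand, collapsing classes $1,\dotsc,m-1$ of $\bq{v}_m$ into a single ``first class'' of density $\rho_{m-1}$ and treating class-$m$ particles as ``second class'' of density $\lambda_m$ gives a two-class coupling; by property (ii) the two layers have the correct marginals $\nu^{\rho_{m-1}}\subseteq\nu^{\rho_m}$, and for two-class TASEP the stationarity of the ordered coupling is classical \cite{Liggett76}. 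Combining the two pictures — stationarity of the internal refinement of the first class, and stationarity of the two-class collapse — shows that the full $m$-class configuration is invariant under the joint dynamics. Translation-invariance, (i), (ii), and stationarity together pin down $\mu^{\bar\lambda}$ via the uniqueness in Theorem~\ref{thm:TASEPprod}, and the reflection mapping $i\mapsto -i$ converts between the leftward and rightward conventions, yielding both statements of the theorem.

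The hard part will be the stationarity step: while the two-class reduction is conceptually clean, one must verify that the nested coupling produced by $\Vmap_n$ is simultaneously consistent with the $(m-1)$-type dynamics on the lower classes \emph{and} with the two-class dynamics across the top cut, and that these two invariances together imply invariance under the full $m$-type generator $\Ll$. Concretely, this amounts to checking that for each edge $(x,x+1)$ the exclusion swap encoded in \eqref{Tg} — now adapted to classes — preserves the joint law, an identity that ultimately reduces to a site-by-site balance of the same flavor as the Burke identity but applied to the priority-queue map $F_{m-1}$. Once this infinitesimal balance is verified, the uniqueness from Theorem~\ref{thm:TASEPprod} completes the proof.
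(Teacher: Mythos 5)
Your proposal does not match the paper's approach: the paper does not prove the first assertion (invariance under the left-jumping $m$-type TASEP) but cites it to Ferrari--Martin \cite{Ferrari-Martin-2007}; the only argument actually supplied in the paper is the brief Poisson-clock reflection in the remark that follows the theorem statement, which your closing sentence compresses but which is not where the difficulty lies. Your route -- deriving the Bernoulli projections (ii) from Lemma~\ref{lem:Cl} together with Burke's theorem (Lemma~\ref{lm:Burke}), and then invoking the uniqueness clause of Theorem~\ref{thm:TASEPprod} -- is sound as a skeleton, and the marginal computations and translation-invariance are correct. But the stationarity step has a genuine gap, which you yourself flag as ``the hard part.''

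Concretely: stationarity of $\mu^{\bar\lambda}$ under the $m$-type TASEP generator is equivalent to joint invariance of the nested tuple $\bigl(\ind[\bq{v}_m \le 1], \dots, \ind[\bq{v}_m \le m]\bigr)$ under $m$ basic-coupled single-class TASEPs. Your two inputs -- invariance of the $(m-1)$-class internal refinement $\bigl(\ind[\bq{v}_m\le 1],\dots,\ind[\bq{v}_m\le m-1]\bigr)$ by induction, and invariance of the two-class collapse $\bigl(\ind[\bq{v}_m\le m-1],\ind[\bq{v}_m\le m]\bigr)$ by \cite{Liggett76} -- do not determine the full joint law of all $m$ nested projections (for instance the joint law of $\ind[\bq{v}_m\le 1]$ and $\ind[\bq{v}_m\le m]$ is unconstrained by those two marginal families), so invariance of the two projected processes does not transfer to invariance of the joint. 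The point of Ferrari--Martin's argument is precisely to close this gap by working directly with the multi-line configuration $\bar{\bq{x}}$: one defines a dynamics on $\ml_n$ that trivially preserves the product law $\nu^{\bar\lambda}$, and then proves that the collapsing map $\Vmap$ intertwines it with the $m$-type TASEP dynamics. Without such an intertwining -- or a direct generator computation showing $\int \Ll f \, d\mu^{\bar\lambda} = 0$ -- the ``stationary'' hypothesis needed to invoke uniqueness in Theorem~\ref{thm:TASEPprod} has not been verified, and the argument does not close.
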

\begin{remark}
The statement about the TASEP with rightward jumps is not included in \cite{Ferrari-Martin-2007}, but its proof is straightforward. Reflecting the index does not change the density of the particles, so the values $\lambda_r$ are preserved. Consider an $m$-type TASEP with left jumps $\{\eta_t\}_{t \ge 0}$ defined by the  Poisson clocks  $\{\cN_i\}_{i \in \Z}$ and started from initial profile $\eta_0 \sim \bq{v}_m$. Let $\{\widecheck \eta_t\}_{t \ge 0}$ be TASEP with right jumps defined by the   Poisson clocks $\{\cN_{-i}\}_{i \in \Z}$ and started from initial profile $\{\widecheck \eta_0(i)\}_{i \in \Z} := \{\eta_0(-i)\}_{i \in \Z}$, which has distribution $\{\bq{v}_m(-i)\}_{i \in \Z}$. Then, in the process $\eta_t$ a particle jumps from site $i$ to site $i - 1$ exactly when a particle in the process $\widecheck \eta_t$ jumps from site $-i$ to site $-i + 1$. By the invariance of $\eta_0$ under TASEP with left jumps,
\[
\{\widecheck \eta_t(i)\}_{i \in \Z} = \{\eta_t(-i)\}_{i \in \Z} \deq \{\eta_0(-i)\}_{i \in \Z} = \{\widecheck \eta_0(i)\}_{i \in \Z}, 
\]
so $\{\bq{v}_m(-i)\}_{i \in \Z}$ is the invariant measure for TASEP with right jumps and densities $\lambda_r$. 
\end{remark}

	For  $\bq{x}\in\Qs_n$, define 
\begin{equation*}
\Cls^{[i,j]}_m(\bq{x})=\#\{l\in [i,j]:{x}(l)\leq m\}, \qquad m\in\lzb1,n\rzb.
\end{equation*}  
$\Cls^{[i,j]}_m(\bq{x})$  records the number of customers in classes  $\lzb1,m\rzb$   during time interval  $[i,j]$  in the sequence  $\bq{x}$. Note that a customer of class $m$ appears in $[i,j]$ iff $\Cls^{[i,j]}_m(\bq{x})>\Cls^{[i,j]}_{m-1}(\bq{x})$, with the convention $\Cls_0\equiv0$. The key technical lemma is that the iteration in \eqref{eq1} can be represented by tandem queues.

\begin{lemma}\label{lem:Cl}
Let $n\in\N$ and $\bar{\bq{x}}=(\bq{x}_1,\dotsc,\bq{x}_n)\in\ml_n$. 
	Let $\bq{v}_n=\Vmap_n(\bar{\bq{x}})$, where $\Vmap_n$ is given in \eqref{eq1}.   Define
	 \begin{equation*}
	\begin{aligned}
	\depv^{n,i}:=D(\bq{x}_{i},\bq{x}_{i+1},\dotsc,\bq{x}_n) \quad \text{for } \ i=1,\dotsc,n-1,  \quad \text{and} \quad 
	\depv^{n,n}:=\bq{x}_n.
	\end{aligned}
	\end{equation*}
	Then for all time intervals $[i,j]$, 
	\begin{equation}\label{ind}
		\big(\Cls^{[i,j]}_1(\bq{v}_n),\dotsc,\Cls^{[i,j]}_n(\bq{v}_n)\big)
		=  \big(d^{n,1}[i,j],\dotsc,d^{n,n}[i,j]\big).
	\end{equation}
\end{lemma}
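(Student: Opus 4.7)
I would induct on $n$ and establish a slightly stronger pointwise version of \eqref{ind}: for every $m \in \lzb 1,n\rzb$ the $\Qs_1$-valued indicator sequence $\ind_{\bq{v}_n \leq m} := \{\ind[\bq{v}_n(l)\leq m]\}_{l \in \Z}$ coincides with $\depv^{n,m}$ coordinate by coordinate. Summing over $l \in [i,j]$ then recovers \eqref{ind}. The base case $n=1$ is immediate, since $\bq{v}_1 = \bq{x}_1 = \depv^{1,1}$ and $\ind_{\bq{v}_1 \leq 1} = \bq{x}_1$.

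The workhorse I would prove first is a pointwise identity for one step of the Ferrari--Martin priority queue: if $\depv = F_m(\arrv,\srvv)$ with $\arrv\in\Qs_m$ and $\srvv\in\Qs_1$, then for each $k\in\lzb 1,m\rzb$,
\[
\ind_{\depv \leq k} = D(\arrv^{\leq k},\srvv) \quad \text{in } \Qs_1.
\]
Coordinate by coordinate, when $s(i)=\infty$ both sides equal $\infty$ by the last line of \eqref{fm} and by \eqref{eq6}; when $s(i)=1$ both equal $1$ iff either $a^{\leq k}(i)=1$ or the relevant queue length at time $i-1$ is positive. The crucial observation is that the $\sup$-formula used to define $Q^{\leq k}_{i-1}(\arrv,\srvv)$ in the multiclass queue is literally the single-class expression $Q_{i-1}(\arrv^{\leq k},\srvv)$ from \eqref{Q}. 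Conceptually this identity reflects the priority rule: class-$\leq k$ customers are always served first, so from their perspective they have exclusive access to the entire service stream $\srvv$---any service consumed by a class-$(>k)$ customer occurs only when the class-$\leq k$ subqueue is empty.

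With this identity in hand, the inductive step is short. Taking $\arrv = \bq{v}_{n-1}$ and $\srvv = \bq{x}_n$ gives $\ind_{\bq{v}_n \leq k} = D\bigl(\ind_{\bq{v}_{n-1} \leq k},\,\bq{x}_n\bigr)$ for every $k \in \lzb 1,n-1\rzb$. The inductive hypothesis identifies $\ind_{\bq{v}_{n-1} \leq k}$ with $\depv^{n-1,k} = D(\bq{x}_k,\dotsc,\bq{x}_{n-1})$ (with the convention $\depv^{n-1,n-1} = \bq{x}_{n-1}$), and then the tandem definition \eqref{eq18} collapses $D\bigl(D(\bq{x}_k,\dotsc,\bq{x}_{n-1}),\bq{x}_n\bigr)$ to $D(\bq{x}_k,\dotsc,\bq{x}_n) = \depv^{n,k}$. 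The boundary case $k = n$ must be handled separately: by the last line of \eqref{fm} one has $\dep(i) \in \lzb 1,n\rzb$ iff $s(i) = \bq{x}_n(i) = 1$, so $\ind_{\bq{v}_n \leq n} = \bq{x}_n = \depv^{n,n}$. The only place any real thought is required is establishing the single-step identity via the priority argument; once it is in place, the rest is bookkeeping with the tandem recursion \eqref{eq18}.
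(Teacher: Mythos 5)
Your argument is correct and follows essentially the same inductive structure as the paper's proof; your single-step pointwise identity $\ind_{\depv\le k}=D(\arrv^{\le k},\srvv)$, obtained by noting that $Q^{\le k}_{i-1}(\arrv,\srvv)=Q_{i-1}(\arrv^{\le k},\srvv)$, is the explicit version of what the paper asserts informally when it says that since the same service process acts in both queueing maps, the outputs match. The only cosmetic differences are that the paper starts the induction at $n=2$ and phrases the claim at the level of interval counts $\Cls^{[i,j]}_m$ rather than indicator sequences.
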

\begin{proof}
	The proof goes by induction on $n$, with  base case $n=2$. From   \eqref{Q670}, $\depv^{2,1}=D(\bq{x}_1,\bq{x}_2)$ registers the first class departures out of the queue $F_1(\bq{x}_1,\bq{x}_2)$ while $\depv^{2,2}=\bq{x}_2$ is the combined number of first and second class customers coming out of the queue.  The case $n=2$ of \eqref{ind}  has been verified.

	Assume   \eqref{ind} holds for some $n=k\ge2$. 	This means that  for each $m\in\lzb1,k\rzb$, $\depv^{k,m}$ registers the   customers in classes $\lzb1,m\rzb$ in $\bq{v}_k$. In the next step, $\bq{v}_{k+1}=F_k(\bq{v}_k, \bq{x}_{k+1})$ and 
	\begin{equation*}
		\big(\depv^{k+1,1},\dotsc,\depv^{k+1,k+1}\big)=\big(D(\depv^{k,1},\bq{x}_{k+1}),\dotsc,D(\depv^{k,k},\bq{x}_{k+1}),\bq{x}_{k+1}\big). 
	\end{equation*}
	Since the same service process $\bq{x}_{k+1}$ acts in both queuing maps, the outputs match in the sense that for each $m\le k$,  $\depv^{k+1,m}=D(\depv^{k,m},\bq{x}_{k+1})$ registers the customers in classes $\lzb1,m\rzb$ in $\bq{v}_{k+1}$.   
	Under $F_k(\bq{v}_k, \bq{x}_{k+1})$, unused services become departures of class $k+1$. Hence  every service event of $\bq{x}_{k+1}$ becomes a departure of some class in $\lzb1,k+1\rzb$. This verifies the equality $\Cls^{[i,j]}_{k+1}(\bq{v}_{k+1})={x}_{k+1}[i,j]=d^{k+1,k+1}[i,j]$ of the last coordinate. Thereby the validity of \eqref{ind} has been extended from $k$ to $k+1$. 
\end{proof}

\subsection{Convergence of queues}

\medskip \noindent 
This section shows the finite-dimensional weak  convergence of the TASEP speed process,  using the representation of stationary distributions in terms of  queuing mappings. To do this, we derive a convenient representation for the random walk defined by the departure mapping $D$ (Equation~\eqref{DQ}). Consistently with the count notation $x[i,j]$ introduced above for  $\bq x \in \Qs_1$, abbreviate 
   $x[i] =  x[i,i]=\ind_{x(i) = 1}$. With this convention,  configurations $\bq{x}$ can also be thought of as members of the sequence space  $\{0,1\}^\Z$. Recall the operation $\mathcal 
 P$ from~\eqref{eqn:TASEPh}.
   \begin{lemma} 
   For $i \in \Z$,
   \be \label{DQ}
    \mathcal P[D(\arrv,\srvv)](i) = \mathcal P[\srvv](i) + \sup_{-\infty < j \le 0}[\mathcal P[\srvv](j) -\mathcal P[\arrv](j)] - \sup_{-\infty < j \le i}[\mathcal P[\srvv](j) -\mathcal P[\arrv](j)].
   \ee
   \end{lemma}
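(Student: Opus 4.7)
The plan is to reduce~\eqref{DQ} to a telescoping increment identity and verify the latter through a Skorokhod-reflection representation of the queue length.

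Set $W(i) = \mathcal{P}[\srvv](i) - \mathcal{P}[\arrv](i)$ and $M(i) = \sup_{j \le i} W(j)$, finite under the standing assumption that queue lengths are finite. Both sides of~\eqref{DQ} vanish at $i=0$, so applying the elementary identity $\mathcal{P}[\eta](i) - \mathcal{P}[\eta](i-1) = 2\eta[i-1]-1$ to $D(\arrv,\srvv)$ and $\srvv$ reduces the claim to proving
\[
M(i) - M(i-1) \;=\; 2(s[i-1] - d[i-1]) \;=\; 2\, u[i-1] \qquad \text{for every } i \in \Z,
\]
where $u = U(\arrv,\srvv)$ is the unused-service process of~\eqref{defU} and $s[i-1] = d[i-1] + u[i-1]$ because every service is either used or unused.

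To bridge $Q$ and $M$, I rewrite the supremum in~\eqref{Q}. From $W(k+1)-W(k) = 2(s[k] - a[k])$ one gets $\sum_{k=j}^{i-1}(a[k]-s[k]) = \tfrac12(W(j) - W(i))$, whence
\[
Q_{i-1} \;=\; \tfrac12\bigl(M(i-1) - W(i)\bigr)^{+}, \qquad\text{and in particular}\qquad M(i) - W(i) = 2 Q_{i-1}.
\]
Thus $Q_{i-2} = 0$ is equivalent to $M(i-1) = W(i-1)$. I also use that $W$ takes values in $2\Z$ (its increments lie in $\{-2,0,2\}$), so $M$ does too, and any strict inequality $M(j) > W(j)$ forces $M(j) \ge W(j)+2$.

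A two-case split on $u[i-1]$ finishes the argument. If $u[i-1]=1$, then $s[i-1]=1$, $a[i-1]=0$, and $Q_{i-2}=0$; hence $M(i-1) = W(i-1)$ and $W(i) = W(i-1)+2 = M(i-1)+2$, giving $M(i) = W(i) = M(i-1)+2$. If $u[i-1]=0$, the goal is $W(i) \le M(i-1)$ so that $M(i) = M(i-1)$: this is immediate when $s[i-1]=0$ or when $s[i-1]=a[i-1]=1$. The only nontrivial subcase is $s[i-1]=1$, $a[i-1]=0$, $Q_{i-2}>0$, where $W(i) = W(i-1)+2$; the parity observation combined with $Q_{i-2}>0$ yields $M(i-1) \ge W(i-1)+2 = W(i)$. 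Telescoping in $i$ from $0$, forward for $i>0$ and backward for $i<0$, then gives~\eqref{DQ}. The main obstacle is purely bookkeeping: keeping the indexing straight between the queue time $i-2$ and the interface time $i-1$, and exploiting the evenness of $W$ in the one delicate subcase.
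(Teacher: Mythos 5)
Your proof is correct, and it takes a genuinely different route from the paper's. The paper's argument is global: it starts from the accounting identity $D(\arrv,\srvv)[j,i]=Q_{j-1}-Q_i+a[j,i]$, rewrites $Q$ via \eqref{Qialt}, and then converts the whole expression into the $\mathcal P$-language through the summation identity $2x[j,i] - (i-j+1) = \mathcal P[\bq{x}](i+1) - \mathcal P[\bq{x}](j)$; the claim then drops out of a direct algebraic rearrangement with no case analysis. You instead exploit that both sides of \eqref{DQ} vanish at $i=0$ and reduce the statement to the local increment identity $M(i)-M(i-1)=2u[i-1]$, where $M$ is the running supremum of $W=\mathcal P[\srvv]-\mathcal P[\arrv]$ and $u$ is the unused-service process from \eqref{defU}, which you verify by a case split on $u[i-1]$ together with the Skorokhod-reflection relation $M(i)-W(i)=2Q_{i-1}$ and a parity argument for the $2\Z$-valued walk $W$. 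Your approach is a bit longer and needs the parity trick and the decomposition $s[i-1]=d[i-1]+u[i-1]$, but it gives a sharper structural picture: it isolates exactly when and why the running maximum advances, namely at unused services, which is implicit but not visible in the paper's one-shot algebraic computation. Both derivations rest on the same underlying Lindley recursion; they just package it at different granularities.
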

  \begin{proof}
      Recall the definition of $D$ from~\eqref{eq6}. Observe that
      \begin{equation} \label{gen:Qa}
    D(\arrv,\srvv)[j,i]=Q_{j-1}-Q_i+a[j,i],
\end{equation}
because any arrival that  cannot be accounted for in $Q_i$ must have left by time $i$.
Use also the empty interval convention  $x[i + 1,i] = 0$. Then, from~\eqref{Q}, we can equivalently write 
   \be \label{Qialt}
   Q_i = \sup_{j:j \le i + 1}\bigl(a^{\le 1}[j,i] - s[j,i]\bigr). 
   \ee
Now, observe that for $\bq{x} \in \Qs_1$,
\be \label{xincs}
2x[j,i] - (i - j + 1) = \sum_{k = j}^i (2x[k] - 1) = \mathcal P[\bq{x}](i + 1) - \mathcal P[\bq{x}](j)
\ee
Combining~\eqref{gen:Qa}--\eqref{xincs} and the definition  $\mathcal P[\bq{x}](0) = 0$, gives for $i > 0$,
\begin{align*}
\mathcal P[D(\arrv,\srvv)](i) &= \sum_{k = 0}^{i - 1} (2D(\arrv,\srvv)[k] - 1) = 2D(\arrv,\srvv)[0,i-1] - i \\
&=  2a[0,i-1] - i + 2Q_{-1} - 2Q_{i -1} \\
&\overset{\eqref{xincs}}{=} \mathcal P[\arrv](i) - \mathcal P[\arrv](0) + 2\sup_{-\infty < j \le 0}[a^{\le 1}[j,-1] - s[j,-1]] - 2\sup_{-\infty < j \le i }[a^{\le 1}[j,i-1] - s[j,i-1]] \\
&= \mathcal P[\arrv](i) + \sup_{-\infty < j \le 0}[2a^{\le 1}[j,-1] +j - (2s[j,-1] + j)] \\
&\qquad\qquad - \sup_{-\infty < j \le i}[2a^{\le 1}[j,i-1] - (i - j) - (2s[j,i-1] - (i - j))] \\
&\overset{\eqref{xincs}}{=} \mathcal P[\arrv](i) + \sup_{-\infty < j \le 0}[\mathcal P[\arrv](0) - \mathcal P[\arrv](j) - \mathcal P[\srvv](0) + \mathcal P[\srvv](j) ] \\
&\qquad\qquad\qquad - \sup_{-\infty < j \le i}[\mathcal P[\arrv](i) - \mathcal P[\arrv](j) - \mathcal P[\srvv](i) + \mathcal P[\srvv](j) ] \\
&= \mathcal P[\srvv](i) + \sup_{-\infty < j \le 0}[\mathcal P[\srvv](j) -\mathcal P[\arrv](j)] - \sup_{-\infty < j \le i}[\mathcal P[\srvv](j) -\mathcal P[\arrv](j)].
\end{align*}
The case $i < 0$ follows an analogous proof. 
  \end{proof} 
   




\begin{proposition}\label{prop:fdd}   Fix the centering $v \in (-1,1)$. 
Then the scaled TASEP speed process  $H^N$ of  \eqref{H138} satisfies the weak convergence 
$(H_{\mu_1}^N,\dotsc,H_{\mu_k}^N)$ $\Rightarrow$ $(G_{\mu_1},\dotsc,G_{\mu_k})$  on $C(\R)^k$ for any finite sequence 
$(\mu_1,\dotsc,\mu_k)\in \R^k$.  
\end{proposition}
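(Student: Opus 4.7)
Fix $\mu_1 < \mu_2 < \cdots < \mu_k$ and set $\lambda_1^N = (1+\mu_1 N^{-1/3})/2$, $\lambda_j^N = (\mu_j - \mu_{j-1})N^{-1/3}/2$ for $j \ge 2$.  My approach is to realize $(H^N_{\mu_1}, \dotsc, H^N_{\mu_k})$ as a fixed functional of independent Bernoulli random walks via the Ferrari--Martin queueing construction, push this through the explicit departure formula \eqref{DQ}, and then pass to the Brownian limit by combining Donsker's theorem with a truncation argument for the supremum operations appearing in \eqref{DQ}.

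\textbf{Step 1 (queueing representation).} Apply Lemma \ref{lem:FU} to the nondecreasing step function whose level sets are the intervals $(\mu_{j-1}N^{-1/3}, \mu_j N^{-1/3}]$ (with $\mu_0 = -\infty$, $\mu_{k+1} = \infty$).  This identifies the joint law of $\bigl(\ind_{U \le \mu_j N^{-1/3}}\bigr)_{j=1}^k$ with the reflected multiclass measure $\widecheck\mu^{\bar\lambda^N}$.  Theorem \ref{thm:FM} realizes this law (up to reflecting the index) via $\bigl(\ind_{\bq{v}_k \le m}\bigr)_{m=1}^k$, where $\bar{\bq{v}} = \Vmap(\bar{\bq{x}})$ is built from independent Bernoulli streams $\bq{x}_j$ of intensity $\sum_{i \le j} \lambda_i^N = (1+\mu_j N^{-1/3})/2$.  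By Lemma \ref{lem:Cl}, $\ind_{\bq{v}_k \le m} = \depv^{k,m}$ as $\{0,1\}$-valued sequences, where $\depv^{k,m} = D(\bq{x}_m, \bq{x}_{m+1}, \dotsc, \bq{x}_k)$ is a tandem-queue output.  Tracking the index reflection gives the distributional identity
\begin{equation*}
\bigl(H^N_{\mu_m}(x)\bigr)_{m=1}^k \deq \bigl(-N^{-1/3}\mathcal{P}[\depv^{k,m}](-\fl{2xN^{2/3}})\bigr)_{m=1}^k
\end{equation*}
up to $O(N^{-1/3})$ lattice corrections that vanish in the limit.

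\textbf{Step 2 (Donsker to Brownian motions and the SH functional).} Define the rescaled random walks $B_j^N(y) := N^{-1/3}\mathcal{P}[\bq{x}_j](\fl{2yN^{2/3}})$.  Since the increments $2\bq{x}_j(i) - 1$ are i.i.d.\ with mean $\mu_j N^{-1/3}$ and variance tending to $1$, Donsker's invariance principle together with independence across $j$ gives
\begin{equation*}
(B_1^N, \dotsc, B_k^N) \Rightarrow (B_1, \dotsc, B_k) \text{ in } C(\R)^k,
\end{equation*}
where the limits are independent two-sided Brownian motions of diffusivity $\sqrt 2$ with drifts $y \mapsto 2\mu_j y$.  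Iterating \eqref{DQ} along the tandem $\depv^{k,m} = D(\bq{x}_m, \dotsc, \bq{x}_k)$ expresses $N^{-1/3}\mathcal{P}[\depv^{k,m}](\fl{2xN^{2/3}})$ as a fixed functional $\Psi_m$ of $(B_m^N, \dotsc, B_k^N)$ built only from sums and suprema of differences; the construction of SH recalled in Appendix~\ref{sec:stat_horiz} identifies $\Psi_m$ applied to independent Brownian motions with the correct drifts as $G_{\mu_m}$.  Thus on the event that $\Psi_m$ is continuous at $(B_m, \dotsc, B_k)$, continuous mapping delivers the desired limit.

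\textbf{Step 3 (main obstacle: compactification of the suprema).} The delicate point is that \eqref{DQ} contains suprema over the unbounded set $\{j \le i\}$, so $\Psi_m$ is not continuous on all of $C(\R)^{k-m+1}$.  The saving feature is that the increment process $B_{j+1}(y) - B_j(y)$ has positive drift slope $2(\mu_{j+1} - \mu_j) > 0$: as $y \to -\infty$ the drift dominates the $O(\sqrt{|y|})$ Brownian fluctuations and the supremum is a.s.\ attained on a compact set.  Quantitatively, a maximal inequality for a biased random walk with drift of order $(\mu_{j+1}-\mu_j)N^{-1/3}$ on scale $2N^{2/3}$ (equivalently an exponential Doob estimate on the associated martingale) yields, for every $\eta, x_0 > 0$, constants $T > 0$ and $N_0$ such that
\begin{equation*}
\Pp\Bigl(\sup_{y \le -T}\bigl[B_{j+1}^N(y) - B_j^N(y)\bigr] \ge \sup_{-T \le y \le x_0}\bigl[B_{j+1}^N(y) - B_j^N(y)\bigr]\Bigr) < \eta \quad \text{for all } N \ge N_0.
\end{equation*}
This lets one truncate each supremum in \eqref{DQ} to $[-T, x_0]$ with total error below a chosen threshold, on which the continuous mapping theorem applies to the Donsker convergence of Step 2.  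The main work is to carry the truncation through the induction on the depth of the tandem: at each stage one must verify that the intermediate output $N^{-1/3}\mathcal{P}[D(\bq{x}_m, \dotsc, \bq{x}_j)](\fl{2\cdot N^{2/3}})$ is itself tight on compacts and inherits the same drift-domination tail bound.  Burke-type reversibility (Section~\ref{sec:FM}) provides the clean way to see that each such intermediate output is marginally distributed as $\bq{x}_m$, supplying the uniform tail control needed to propagate the truncation through all $k-m$ iterations.
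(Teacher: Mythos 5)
Your proposal follows essentially the same route as the paper: Lemma~\ref{lem:FU} plus Theorem~\ref{thm:FM} and Lemma~\ref{lem:Cl} to realize the joint law as tandem-queue departures applied to independent Bernoulli streams, the explicit departure formula~\eqref{DQ} to express the scaled departures as running-supremum functionals, Burke's theorem to control the marginals of the intermediate outputs, and a drift-domination truncation of the unbounded suprema so the Brownian limit can be taken via a continuous-mapping argument and pushed through an induction on tandem depth. The only real divergence is at the technical bottom layer: the paper invokes a Skorokhod coupling and then applies the random walk supremum-convergence Lemma~\ref{lem:convprob} (after Resnick/Asmussen) to handle the truncation, whereas you propose Donsker with a bare-hands exponential Doob estimate for the tail; either route closes the gap, and otherwise the proofs match step for step.
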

\begin{proof}

Without loss of generality, take $\mu_1 < \mu_{2} < \cdots < \mu_k$.  
For  $N > |\mu_1|^{3} \vee |\mu_k|^{3}$, consider the following nondecreasing map $F:[-1,1] \to \{1,\ldots,k\} \cup \{\infty\}$:
 \[
 \text{For } U \in [-1,1],\; F(U) =
 \begin{cases}
 1, & U \le v + \mu_1 (1 - v^2) N^{-1/2} \\
 2,& v + \mu_1(1 - v^2) N^{-1/2} < U \le v + \mu_2(1 - v^2) N^{-1/2} \\
 \vdots & \\
 k, &v + \mu_{k - 1}(1 - v^2)N^{-1/2} < U \le v +  \mu_k(1 -v^2) N^{-1/2} \\
 \infty, &U > v + \mu_k(1 -v^2) N^{-1/2}.
\end{cases}
 \]
 By considering the output of this map as classes, Lemma~\ref{lem:FU} implies that $\{F(U_i)\}_{i \in \Z}$
 is distributed as the stationary distribution for $k$-type TASEP with right jumps and densities
 \[
 \bar \lambda = \biggl(\f{1 + v + \mu_1(1 - v^2) N^{-1/2}}{2},\f{(\mu_2 - \mu_1)(1 - v^2)N^{-1/2}}{2},\ldots,\f{(\mu_{k} - \mu_{k-1})(1 -v^2)N^{-1/2}}{2}\biggr) \;\in\;(0,1)^k. \] 
 The reflection of Theorem~\ref{thm:FM} and translation invariance then imply that $\{F(U_{-i - 1})\}_{i \in \Z}$
 has the stationary distribution $\mu^{\bar\lambda}$ for TASEP with left jumps.    
Lemma~\ref{lem:Cl} implies that
 \be \label{bercoup}\begin{aligned} 
&\bigl(\ind_{U_{-i - 1} \le v + \mu_1(1 - v^2) N^{-1/2}},\ldots,  \ind_{U_{-i-1} \le v + \mu_{k-1} (1 -v^2) N^{-1/2}}\,, \ind_{U_{-i-1} \le v + \mu_k (1 -v^2) N^{-1/2}}\bigr)_{i \in \Z}\\
&\qquad\qquad\qquad
\deq \bigl(D(\bq{x}_1^N,\ldots,\bq{x}_k^N)[i]\,,\ldots,D(\bq{x}^N_{k - 1},\bq{x}_k^N)[i]\,,\bq{x}_k^N[i]\bigr)_{i \in \Z},
\end{aligned} \ee
 where $(\bq{x}_1^N,\ldots,\bq{x}_k^N) \sim \nu^{\bar \lambda}$.
 Remark \ref{rmk:L/R8} at the end of the section gives an alternative way to justify the index reversal on the left-hand side above when $v = 0$.  

Before proceeding with the proof, we give a roadmap. First, by definition of $\mathcal P$,  if $\widecheck{U}_i = U_{-i - 1}$, then for $x \in \R$,
\be \label{chU}
\begin{aligned}
H_{\mu}^N(x) &=  N^{-1/2}\mathcal P[\ind_{U \le v + \mu(1 - v^2) N^{-1/2}}]\Bigl(\f{2x}{1 - v^2}N\Bigr) -\f{2vx}{1 -v^2}N^{1/2} \\
&= -N^{-1/2}\mathcal P[\ind_{\widecheck U \le v + \mu(1 - v^2) N^{-1/2}}]\Bigl(-\f{2x}{1-v^2}N\Bigr) - \f{2vx}{1- v^2}N^{1/2}.
\end{aligned}
\ee
Our goal is to show the weak limit 
\be \label{convflip}
\begin{aligned}
&\biggl(-N^{-1/2}\mathcal P[\bq{x}_k^N]\Bigl(\f{2\tsp\aabullet}{1 - v^2}N\Bigr) + \f{2v\tsp\aabullet}{1 - v^2}N^{1/2}\,, \\[3pt]
&\qquad -N^{-1/2}\mathcal P[D(\bq{x}^N_{k-1},\bq{x}_k^N)]\Bigl(\f{2\tsp\aabullet}{1 - v^2}N\Bigr) + \f{2v\tsp\aabullet}{1 - v^2}N^{1/2},\ldots, \\[3pt] 
&\qquad
-N^{-1/2}\mathcal P[D(\bq{x}_1^N,\ldots,\bq{x}_k^N)]\Bigl(\f{2\tsp\aabullet}{1 - v^2}N\Bigr) + \f{2v\tsp\aabullet}{1 - v^2}N^{1/2}\biggr) \\
&\qquad\qquad  
\Longrightarrow (G_{-\mu_k},\ldots, G_{-\mu_1}).
\end{aligned}
\ee
From \eqref{convflip},  \eqref{bercoup} and~\eqref{chU} follows $(H_{\mu_1}^N,\ldots,H_{\mu_k}^N) \Longrightarrow (G_{-\mu_1}(-\tsp\aaabullet),\ldots, G_{-\mu_k}(-\tsp\aaabullet))$. This limit has the same distribution as $(G_{\mu_1},\ldots,G_{\mu_k})$ by Theorem~\ref{thm:SH10}\ref{SHrelfect}. As mentioned previously,   these reflections   in the proof are a consequence of having   time flow left to right in  the queuing setting. 

We prove \eqref{convflip}.
 By construction, for $j \in \lzb 1,k\rzb$, $\{{x}^N_j[i]\}_{i \in \Z}$ is an i.i.d. Bernoulli sequence with intensity $\sum_{\ell \le j} \lambda_\ell = \f12{(1 +v +  \mu_j(1 -v^2)N^{-1/2})}$.  Hence, for $j \in \lzb 1,k \rzb$, 
 \be \label{xconv}
 -N^{-1/2} \mathcal P[\bq{x}_j^N]\Bigl(\f{2\aabullet}{1 - v^2}N\Bigr) + \f{2v\tsp\aabullet}{1 - v^2}N^{1/2}
 \ee
 converges in distribution  to a Brownian motion with diffusivity $\sqrt 2$ and drift $-2\mu_j$. 
 To elevate this to the  joint convergence of~\eqref{convflip}, we utilize the queueing mappings in \eqref{bercoup} and the transformations $\Phi^k$ from Appendix  \ref{sec:stat_horiz} that construct SH.

By Skorokhod representation (\cite[Thm.~11.7.2]{dudl}, \cite[Thm.~3.1.8]{ethi-kurt}), we may couple $\{\bq{x}_j^N\}_{j = 1,\ldots,k}$ and independent Brownian motions $\{B_j\}_{j \in \lzb 1,k \rzb}$ with diffusivity $\sqrt 2$ and drift $-2\mu_j$ so that, with probability one, for $j  \in \lzb 1,k \rzb$,~\eqref{xconv}
converges uniformly on compact sets to $B_j$. Let $\Pp$ be the law of this coupling. (To be precise, the sequences  $\bq{x}_j^N$ are  functions of the  converging processes~\eqref{xconv}, which  Skorokhod representation couples with their  limiting  Brownian motions.)


By Appendix  \ref{sec:stat_horiz}, for reals $\mu_1  < \cdots < \mu_k$, the $C(\R)^k$-valued marginal  $(G_{-\mu_k},\dotsc,G_{-\mu_1})$ of SH can be constructed as follows:
\begin{align*}
 G_{-\mu_k}&=\Phi^1(B_k)=B_k, \qquad 
 G_{-\mu_{k-1}}=\Phi^2(B_k, B_{k-1})=\Phi(B_k, B_{k-1}),  \\
  G_{-\mu_{k-2}}&=\Phi^3(B_k, B_{k-1}, B_{k-2})=\Phi(B_k, \Phi(B_{k-1}, B_{k-2})), \dotsc, \\
  G_{-\mu_1}&=\Phi^k(B_k, B_{k-1}, \dotsc,  B_1) =  \Phi(B_k, \Phi^{k-1}(B_{k-1}, \dotsc,  B_1)). 
\end{align*}
The map $\Phi$ as defined   in~\eqref{Phialt} is given by 
\[
\Phi(f,g)(y) = f(y) + \sup_{-\infty <x \le y }\{g(x) - f(x)\} - \sup_{-\infty < x \le 0}\{g(x) - f(x)\}. 
\]
In particular, $\Phi(f,g)$ is a well-defined  continuous random function when $f$ and $g$ are Brownian motions and $f$ has a strictly  smaller drift than $g$. 

 By a union bound, it suffices to show that, under this coupling,  for each $\ve > 0, a > 0$, and each $j = 0,\ldots,k - 1$, 
\be \label{convPhi}\begin{aligned} 
\limsup_{N \to \infty}\Pp\biggl(\;\sup_{x \in [-a,a]}\Bigl|-N^{-1/2}\mathcal P[D(\bq{x}_{k - j}^N,\ldots, \bq{x}_k^N)] & \Bigl(\f{2 x}{1 - v^2}N\Bigr) + \f{2vx}{1 - v^2}N^{1/2}  \\
&- \Phi^{j+1}(B_k,\ldots,B_{k - j})(x)\Bigr| > \ve  \biggr) = 0,
\end{aligned} \ee
We show this by induction on $j$. The base case $j = 0$ follows by the almost sure uniform convergence on compact sets of~\eqref{xconv} to $B_j$. Now, assume the statement holds for some $j - 1 \in \{0,\ldots,k -2\}$. Recall from  definition~\eqref{eq18} that $D(\bq{x}_{k - j}^N,\ldots,\bq{x}_{k}^N) = D(D(\bq{x}_{k - j}^N,\ldots,\bq{x}_{k - 1}^N),\bq{x}_k^N)$. The proof is completed by Lemma~\ref{lem:convD} below.  
\end{proof}

 \begin{lemma} \label{lem:convD}
Let $v \in (-1,1)$. For each $N>0$, let $\arrv^N$ and $\srvv^N$ be $\{0,1\}^\Z$-valued i.i.d. sequences such that  the intensity of $\srvv^N$ is strictly greater than the intensity of $\arrv^N$. Assume further that these sequences are coupled together with Brownian motions $B_1,B_2$ with diffusivity $\sqrt 2$ and drifts $-2\mu_1 > -2\mu_2$ so that, for each $\ve > 0$ and $a > 0$,
\be \label{ipcon}
\begin{aligned}
\limsup_{N \to \infty} &\Pp\Biggl(\sup_{x \in [-a,a]}\Bigl|-N^{-1/2} \mathcal P[\arrv^N]\Bigl(\f{2x}{1 - v^2}N\Bigr) + \f{2vx}{1 - v^2}N^{1/2} - B_1(x) \Bigr| \\
&\qquad\qquad\qquad
\vee \Bigl|-N^{-1/2} \mathcal P[\srvv^N]\Bigl(\f{2x}{1 - v^2}N\Bigr) + \f{2vx}{1 - v^2}N^{1/2} - B_2(x) \Bigr| > \ve \Biggr) = 0.
\end{aligned}
\ee
 Then, for every $\ve > 0$ and $a > 0$, 
\be \label{2ptprob}
\limsup_{N \to \infty} \Pp\Biggl(\sup_{x \in [-a,a]}\Bigl|-N^{-1/2} \mathcal P[D(\mbf a^N,\mbf s^N)]\Bigl(\f{2x}{1 - v^2}N \Bigr) + \f{2vx}{1 - v^2}N^{1/2} - \Phi(B_2,B_1)(x) \Bigr| > \ve  \Biggr) = 0.
\ee

 \end{lemma}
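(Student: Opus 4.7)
The plan is to use the explicit formula~\eqref{DQ}. Setting $W_1^N(y) := -N^{-1/3}\mathcal{P}[\arrv^N](2N^{2/3}y)$, $W_2^N(y) := -N^{-1/3}\mathcal{P}[\srvv^N](2N^{2/3}y)$, and $S^N(x) := \sup_{x' \le x}\{W_1^N(x') - W_2^N(x')\}$, a direct substitution into~\eqref{DQ} using $-c\sup(\,\cdot\,) = \inf(-c\,\cdot\,)$ for $c>0$ gives
\[
-N^{-1/3}\mathcal{P}[D(\arrv^N,\srvv^N)](2N^{2/3}x) \;=\; W_2^N(x) + S^N(x) - S^N(0).
\]
The same algebra applied to definition~\eqref{Phialt} of $\Phi$ yields $\Phi(B_2,B_1)(x) = B_2(x) + S(x) - S(0)$ with $S(x) := \sup_{x' \le x}\{B_1(x') - B_2(x')\}$. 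Since $\sup_{x \in [-a,a]}|W_2^N(x) - B_2(x)| \to 0$ in probability by~\eqref{ipcon}, the task reduces to proving $\sup_{x \in [-a,a]}|S^N(x) - S(x)| \to 0$ in probability.

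The main obstacle is that $S^N$ and $S$ are suprema over the unbounded set $(-\infty, x]$, so one must show that both are determined by restrictions to a common compact interval $[-M, a]$ with high probability, uniformly in $N$. Since $B_1 - B_2$ is a Brownian motion with positive drift $2(\mu_2 - \mu_1)$, $(B_1-B_2)(x') \to -\infty$ almost surely as $x' \to -\infty$. Given $\delta > 0$, one may first choose $K_0 > 0$ with $\Pp(\inf_{x \in [-a,a]}(B_1 - B_2)(x) > -K_0) \ge 1 - \delta$ and then $M > a$ such that $\Pp(\sup_{x' \le -M}(B_1 - B_2)(x') < -K_0) \ge 1 - \delta$. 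On the intersection event, $S(x) = \sup_{x' \in [-M, x]}(B_1-B_2)(x')$ for every $x \in [-a,a]$. The analogue for $S^N$, uniform in $N$, is the key technical step: setting $T_n := \sum_{k=1}^n(2\srvv^N(-k) - 2\arrv^N(-k))$, one has $(W_1^N-W_2^N)(-n/(2N^{2/3})) = -N^{-1/3}T_n$, where $T_n$ is a random walk with bounded increments, mean $c_N n$ for $c_N = (\mu_2-\mu_1)N^{-1/3}$, and per-step variance uniformly bounded in $N$. Applied in dyadic blocks starting at $n_0 \asymp M N^{2/3}$, exponential (Hoeffding or Doob-type) maximal inequalities on the centered martingale $T_n - c_N n$ --- combined with the deterministic estimate $c_N n \ge 2(\mu_2-\mu_1) M N^{1/3}$ on this range --- show that for $M$ sufficiently large, $\Pp(\inf_{n \ge n_0} T_n > K_0 N^{1/3}) \ge 1 - \delta$ uniformly in large $N$, i.e.\ $\Pp(\sup_{x' \le -M}(W_1^N - W_2^N)(x') < -K_0) \ge 1 - \delta$.

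On the intersection of all four high-probability events (total probability $\ge 1 - 4\delta$ for all $N$ large), both $S^N(x)$ and $S(x)$ for $x \in [-a,a]$ are determined by the restrictions of $W_1^N - W_2^N$ and $B_1 - B_2$ to the compact interval $[-M, a]$. Since the map $f \mapsto \sup_{x' \in [-M, x]} f(x')$ is $1$-Lipschitz in the uniform norm, uniformly in $x \in [-a, a]$, one obtains
\[
\sup_{x \in [-a,a]} |S^N(x) - S(x)| \;\le\; \sup_{y \in [-M, a]}\bigl|(W_1^N - W_2^N)(y) - (B_1 - B_2)(y)\bigr|,
\]
which tends to $0$ in probability by~\eqref{ipcon}. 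Sending $\delta \downarrow 0$ yields~\eqref{2ptprob}.
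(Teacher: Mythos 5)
Your proof is correct and follows the paper's high-level strategy exactly: rewrite the departure map via~\eqref{DQ} into a service term plus a running-supremum increment, reduce to showing that the running supremum of the rescaled walk $W_1^N-W_2^N$ converges to the running supremum of $B_1-B_2$ uniformly on $[-a,a]$, and obtain this by truncating the supremum to a compact window $[-M,a]$ and applying~\eqref{ipcon} there. The difference is in how you justify the truncation uniformly in $N$. The paper invokes Lemma~\ref{lem:convprob}, which compares the probability that the tail sup of the walk exceeds the compact sup with the analogous Brownian probability (via the random-walk-to-Brownian supremum limit of Lemma~\ref{lem:genres}, quoted from Resnick/Asmussen), and then lets the truncation point go to infinity, using the positive drift of $B_1-B_2$ to kill the Brownian probability. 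You instead give a direct, self-contained tail bound: applying a Hoeffding/Doob maximal inequality to the centered martingale $T_n - c_N n$ on dyadic blocks, and exploiting that at scale $n_0 \asymp MN^{2/3}$ the drift $c_N n_0 \asymp MN^{1/3}$ dominates the fluctuation scale $\sqrt{n_0}\asymp \sqrt{M}N^{1/3}$, so the tail probability decays in $M$ uniformly in $N$. Your route avoids the external convergence result at the cost of an explicit calculation, and arguably makes the uniformity in $N$ more transparent. One small gap to close: to deduce $S^N(x)=\sup_{x'\in[-M,x]}(W_1^N-W_2^N)(x')$ on your good event you also need the prelimit analogue of your first condition, namely $\inf_{x\in[-a,a]}(W_1^N-W_2^N)(x) > -K_0$; this follows from~\eqref{ipcon} together with your Brownian condition for $N$ large, but it should be stated explicitly as the fourth event in your union bound.
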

 \begin{proof}
 From~\eqref{DQ}, we have, for $2N x \in \Z$,
 \be \label{Prew}
 \begin{aligned}
 &-N^{-1/2} \mathcal P[D(\arrv^N,\srvv^N)]\Bigl(\f{2x}{1 - v^2}N\Bigr) + \f{2vx}{1 - v^2}N^{1/2} \\
 &=-N^{-1/2} \mathcal P[\srvv^N]\Bigl(\f{2x}{1 - v^2} N\Bigr) + \f{2vx}{1 -v^2}N^{1/2} \\
&\qquad\qquad + N^{-1/2}\sup_{-\infty < j \le 2Nx/(1 - v^2)}[\mathcal P[\srvv^N](j) - \mathcal P[\arrv^N](j)] 
- N^{-1/2}\sup_{-\infty < j \le 0}[\mathcal P[\srvv^N](j) - \mathcal P[\arrv^N](j)] \\
 &= -N^{-1/2} \mathcal P[\srvv^N]\Bigl(\f{2x}{1 - v^2}N\Bigr)  +\f{2vx}{1 -v^2}N^{1/2} \\
 &\qquad\qquad  
 + \sup_{-\infty < j \le 2Nx/(1 - v^2)}[- N^{-1/2} \mathcal P[\arrv^N](j)- (-N^{-1/2}\mathcal P[\srvv^N](j)) ] \\
&\qquad\qquad
-\sup_{-\infty < j \le 0}[- N^{-1/2} \mathcal P[\arrv^N](j)- (-N^{-1/2}\mathcal P[\srvv^N](j))], 
 \end{aligned}
 \ee 
Hence, from~\eqref{Prew} and the assumed convergence of $N^{-1/2} \mathcal P[\srvv^N](2N\tsp\aabullet)$ in probability~\eqref{ipcon}, to prove~\eqref{2ptprob}, it suffices to show that, for each $a > 0$ and $\ve > 0$,
\be\label{3004} \begin{aligned} 
\limsup_{N \to \infty}\Pp\biggl(\,\sup_{x \in [-a,a]}\Bigl|\sup_{-\infty < y \le [2N x/(1 - v^2)]}\bigl[- N^{-1/2} \mathcal P[\arrv^N](y)- &(-N^{-1/2}\mathcal P[\srvv^N](y )) \bigr] \\
&\quad - \sup_{-\infty < y \le x}[B_1(y) - B_2(y)]  \Bigr| > \ve\biggr) = 0.
\end{aligned} \ee
Note that there is a drift term for both the walks $\mathcal P[\arrv^N]$ and $\mathcal P[\srvv^N]$ that cancels when they are subtracted. 
For shorthand, let 
\[
X^N(y) = - N^{-1/2} \mathcal P[\arrv^N](y)- (-N^{-1/2}\mathcal P[\srvv^N](y )) 
\]
For the $a$ in the hypothesis of the lemma and arbitrary $S > a$, let $E_{N,a,S}$  be the event where these three  conditions all hold: 
\begin{enumerate}[label={\rm(\roman*)}, ref={\rm\roman*}]   \itemsep=3pt
\item $\sup_{-\infty < y \le [-2N a/(1 -v^2)]}\bigl[X^N(y) \bigr] = \sup_{[-2NS/(1 -v^2)]\le y \le [-2Na/(1 - v^2)]}\bigl[X^N(y) \bigr]$.
\item $\sup_{-\infty < y \le -a}[B_1(y) - B_2(y)] = \sup_{-S \le y \le -a}[B_1(y) - B_2(y)]$.
\item $\sup_{x \in [-a,a]}\Bigl|\sup_{[-2NS/(1 - v^2)] \le y \le [2N x/(1 - v^2)]} X^N(y) - \sup_{-S \le y \le x}[B_1(y) - B_2(y)] \Bigr| \le \ve$.
\end{enumerate}
For every $S > a$, the event in~\eqref{3004} is contained in $E_{N,a,S}^c$. By assumption~\eqref{ipcon} and Lemma~\ref{lem:convprob} (applied to the random walk $-\mathcal P[\arrv^N] + \mathcal P[\srvv^N]$ with $m = \mu_2 - \mu_1$, $\sigma = 2$, $\varphi(N) = N^{-1/2}, \xi(N) = 2N/(1 - v^2)$, and $B = B_1(\aabullet/4) - B_2(\aabullet/4)$),  $\lim_{S \to \infty} \limsup_{N \to \infty} \Pp(E_{N,a,S}^c) = 0$, completing the proof. 
\end{proof}

\begin{remark}\label{rmk:L/R8} 
For $v = 0$, one can alternatively arrive at \eqref{bercoup}  by   considering the speed process for TASEP with left jumps. As in \eqref{speed8}, let  $X_i(t)$ be the position of the right-going particle with label $i$ that starts at $X_i(0)=i$ and define the right-going speed process by $U_i=\lim_{t\to\infty} t^{-1}X_i(t)$.  To flip the space direction, define left-going particles $\wt X_i(t)=-X_{-i}(t)$ and the corresponding speed process $\wt U_i=\lim_{t\to\infty} t^{-1}\wt X_i(t)=-U_{-i}$. Reversing the lattice direction reversed the priorities of the labels (for the walks $\wt X$, lower label means lower priority), so the non-decreasing  projection $F$ to left-going multiclass stationary measures has to be applied to speeds $-\wt U_i=U_{-i}$. 

The distributional equality  $\{\wt U_i\}_{i\in\Z}\deq\{U_{i}\}_{i\in\Z}$ from \cite[Proposition 5.2]{Amir_Angel_Valko11}  implies that both speed processes have the same SH limit. This can also be verified by replacing $U$ with $\wt U$ in \eqref{H138}, rearranging, taking the limit, and using the reflection property Theorem \ref{thm:SH10}\ref{SHrelfect} of SH.
\end{remark}

\medskip 
  
	\section{Tightness of the scaled TASEP speed process} \label{sec:tightness}	

Throughout this section the centering $v\in(-1,1)$ in \eqref{H138} is fixed. We mostly omit  dependence on $v$ from the notation.  We show tightness of the process 
\be \label{Href}
\mu\mapsto \wt H_\mu^N(\aabullet) = -H_\mu^N(-\tsp\aaabullet),
\ee
whose tightness is equivalent to that of $H^N$. By~\eqref{bercoup} and~\eqref{chU}, the queuing setup of Section~\ref{sec:FM} applies directly to the distribution of this process.

\subsection{Modulus of continuity} 

Let $\bar{\lambda}=(\lambda_1,\dotsc,\lambda_4)$ and  $\bar{\bq{x}}\sim \nu^{\bar{\lambda}}$. Let $\bq{v}_4=\Vmap_4(\bar{\bq{x}})$. For integers $i\leq j$ define the event
\begin{equation*}
	\TwoRare^{[i,j]}(\bar{\lambda})=\bigl\{\ind_{\Cls^{[i,j]}_1(\bq{v}_4)<\Cls^{[i,j]}_2(\bq{v}_4)}+\ind_{\Cls^{[i,j]}_2(\bq{v}_4)<\Cls^{[i,j]}_3(\bq{v}_4)}+\ind_{\Cls^{[i,j]}_3(\bq{v}_4)<\Cls^{[i,j]}_4(\bq{v}_4)}\geq2\bigr\}.
\end{equation*}
$\TwoRare^{[i,j]}(\bar{\lambda})$ is the event that  $\bq{v}_4$ has customers of at least two 
different classes among the classes $\{2,3,4\}$ in the time interval $[i,j]$.  
The event itself does not depend on $\bar\lambda$ but we include $\bar\lambda$ in the notation to keep in mind  the parameters under which we are calculating. The next lemma will be useful when classes $\{2,3,4\}$ are rare.

\begin{lemma}\label{lem:TR} 
	 Fix $\bar{\lambda}=(\lambda_1,\dotsc,\lambda_4)\in(0,1)^4$ such that $\sum_{l=1}^{4}\lambda_l\le1$. Fix  integers $i\leq j$. Let $\lambda^*=\max\{\lambda_2,\lambda_3,\lambda_4\}$ and  $\Delta=\frac{\lambda^*}{(1-\sum_{l=1}^{3}\lambda_l)\lambda_1}$. Then for any integer  $\MTR\geq 2$ and $\rho\in(0,1)$, 
	\begin{equation}\label{tr}\begin{aligned} 
	&\Pp\big(\TwoRare^{[i,j]}(\bar{\lambda})\big)
	\; \leq \;  72\sum_{s=1}^\infty \big(1-(1-\Delta )^{s\MTR}\big)^2  \\
	&\qquad \qquad
	\times \exp\Bigg\{-\frac{2\Big(\Big[\tfrac {(s-1)\MTR}{24}-\big|\sum_{m=1}^4\lambda_m-\rho\tspb \big|\cdot(j-i+1)-\big|\lambda_1-\rho\tsp \big|\cdot(j-i+1)\Big]_+\,\Big)^2}{9(j-i+1)}\Bigg\}.
\end{aligned} 	\end{equation}
\end{lemma}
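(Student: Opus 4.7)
My plan is to convert TwoRare into a statement about the tandem-queue outputs. By Lemma~\ref{lem:Cl}, $\Cls^{[i,j]}_m(\bq{v}_4) = \depv^{4,m}[i,j]$ with $\depv^{4,4}=\bq{x}_4$ and $\depv^{4,m}=D(\bq{x}_m,\dotsc,\bq{x}_4)$ for $m<4$. Hence $\TwoRare^{[i,j]}(\bar\lambda)$ is the event that customers of at least two distinct classes from $\{2,3,4\}$ occur in $\bq{v}_4$ during $[i,j]$. A union bound over the three pairs $\{m,m'\}\subset\{2,3,4\}$ reduces matters to bounding, for each pair, the probability that both class-$m$ and class-$m'$ customers are present on $[i,j]$; the final prefactor $72$ absorbs this choice of pair together with combinatorial splittings that appear below.

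Next I would decompose each pair-event according to a spatial scale $s\ge 1$. The key idea is that the appearance of a rare class-$m$ customer at a given site corresponds to a specific configuration in the tandem queues, and by the Markovian queueing theory of Appendix~\ref{app:MM1} (Burke reversibility plus interchangeability of tandem queues) the waiting distance for such a configuration admits a geometric-type tail. Applied carefully, this should yield a geometric upper tail with parameter at most $\Delta=\lambda^*/((1-\sum_{l=1}^{3}\lambda_l)\lambda_1)$: intuitively, $\lambda_1$ is the intensity of the innermost arrivals, $1-\sum_{l=1}^3\lambda_l$ is the hole density of $\bq{x}_3$ (and hence the leftover service capacity available to produce the rare class at the outermost stage), and $\lambda^*$ is the worst-case intensity of any single rare class. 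Reversibility can be used to decouple the two waits (or at least to bound their joint tail by the product of their marginals), so that on the scale-$s$ event where both waits are $\le sK$, we obtain the factor $(1-(1-\Delta)^{sK})^2$.

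On the complementary scale-$s$ event, for both rare classes to be present one is forced to have many class-$2,3,4$ customers in a window of length of order $sK$ near $[i,j]$, which is the same as forcing $\bq{x}_4[i,j]-\depv^{4,1}[i,j]$ (total count of rare-class customers) to be large. Writing
\[
\bq{x}_4[i,j] - \depv^{4,1}[i,j] = \bigl(\bq{x}_4[i,j] - \rho(j-i+1)\bigr) - \bigl(\depv^{4,1}[i,j] - \rho(j-i+1)\bigr)
\]
and using that $\bq{x}_4$ is i.i.d.\ Bernoulli with intensity $\sum_l \lambda_l$ while $\depv^{4,1}$ has intensity $\lambda_1$ by Burke, I would compare both walks to the common density $\rho$ via the triangle inequality. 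This translates the required excess into a deviation of size at least $\tfrac{(s-1)K}{24}-|\sum_l\lambda_l-\rho|(j-i+1)-|\lambda_1-\rho|(j-i+1)$, at which point Hoeffding's inequality applied to each of the two Bernoulli sums separately produces the factor $\exp\{-2[\,\cdot\,]_+^2/(9(j-i+1))\}$. The constants $\tfrac{1}{24}$ and $9$ come from how one partitions the required excess between the two walks and how much room one leaves for the scale-$s$ structure. Summing the resulting bounds over $s\ge 1$ yields \eqref{tr}.

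The main obstacle is step two, the geometric tail bound with the precise parameter $\Delta$. Even if Burke's theorem gives the correct marginal intensities at every stage of the tandem, establishing a joint geometric tail in the specific form $\lambda^*/((1-\sum_{l=1}^3\lambda_l)\lambda_1)$ requires a coordinated use of reversibility across all four stages and a careful identification of which stage ``bottlenecks'' the production of a rare-class customer. Getting both the right geometric parameter and the effective independence of the two waits simultaneously is where most of the care will go, whereas the Hoeffding step is comparatively routine once the decomposition has been set up.
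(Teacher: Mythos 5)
Your plan shares the broad contours of the paper's argument---tandem queues via Lemma~\ref{lem:Cl}, geometric tails from Burke's theorem, and Hoeffding for the random-walk fluctuations---but there is a genuine gap exactly where you flag it, and you have also drifted from the actual decomposition in a way that would make the constants and the independence claim unrecoverable.

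The decomposition you propose (union bound over pairs $\{m,m'\}\subset\{2,3,4\}$ of rare classes) is not what the argument needs. The paper instead shows that each individual event $\{\Cls^{[i,j]}_m(\bq{v}_4)<\Cls^{[i,j]}_{m+1}(\bq{v}_4)\}$ is contained in $\{\Qu_m < S^*_m(j)\}$, where $\Qu_m$ is a queue length evaluated at time $i-1$ and $S^*_m$ is the running maximum, started at time $i$, of a walk built from the post-$i$ inputs. The containment comes from the observation that an unused service in $[i,j]$ forces the queue to have emptied before that service, so the queue at time $i-1$ was smaller than the increment of the random walk. Then $\TwoRare$ is bounded by the event that at least two of the three indicators $\ind_{\Qu_m < S^*_m(j)}$ hold, and the factor $3$ appears from $\Pp(\ind_1+\ind_2+\ind_3\ge 2)\le 3\Pp(\hat\Qu_1<s)^2$.

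The step you yourself identify as the main obstacle---getting both the precise geometric parameter $\Delta$ and independence of the waits---is where your proposal has no actual argument. The paper resolves this with the interchangeability identity~\eqref{q259}: it rewrites $D(\bq{x}_1,\bq{x}_2,\bq{x}_3,\bq{x}_4)$ as $D(\bq{x}_1,R(\bq{x}_2,J_3),J_2,I_2)$ where $I_3=D(\bq{x}_3,\bq{x}_4)$, $J_3=R(\bq{x}_3,\bq{x}_4)$, etc. This reformulation is crucial: under it, the Burke property~\eqref{eq11} and the distributional identity~\eqref{disId} give that $\Qu_3$ is independent of the pair $(\Qu_2,\Qu_1)$, and that $\Qu_2$ is independent of $\Qu_1$, so the three queue lengths are \emph{genuinely} jointly independent geometric variables with the parameters that Burke provides directly. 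Reversibility alone, without this rewriting, does not deliver the independence, and a mere product bound on the joint tail would not give you the $(1-(1-\Delta)^{sK})^2$ factor cleanly. The parameter $\Delta$ is then just the largest of the three explicit Burke parameters, not the result of identifying a bottleneck stage.

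Finally, your proposed tail bound is not the right object. You suggest comparing $\bq{x}_4[i,j]$ to $\depv^{4,1}[i,j]$ and centering by $\rho$, but the departure process $\depv^{4,1}$ restricted to $[i,j]$ still depends on the entire past, which breaks the independence from $(\Qu_1,\Qu_2,\Qu_3)$. The paper instead constructs a single process $S^*(j)=6\sum_{k=1}^4\max_{l\in[i,j]}|\bq{x}^\rho_k[i,l]|$ that dominates all three $S^*_m(j)$; this uses the monotonicity properties of the truncated queueing maps in Lemma~\ref{lem:q}, and is measurable with respect to the inputs after time $i-1$ alone, hence independent of the queue lengths. Etemadi plus Hoeffding applied to each $\bq{x}_k$ then give the $24\exp\{\cdot\}$ factor, and $3\times 24 = 72$. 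Without the dominating $S^*$ and the independence it encodes, the final sum over $s$ cannot be assembled. So the intuition is sound, but the missing interchangeability rewrite, the missing independence proof, and the wrong dominating walk are all essential pieces you would need to supply.
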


\begin{proof}
Lemma \ref{lem:Cl} gives this distributional equality: 
	\begin{align*}
			&\big(\Cls^{[i,j]}_1(\bq{v}_4),\Cls^{[i,j]}_2(\bq{v}_4),\Cls^{[i,j]}_3(\bq{v}_4),\Cls^{[i,j]}_4(\bq{v}_4)\big)\\[3pt]  
			&\qquad\qquad 
			\sim \big(D(\bq{x}_1,\bq{x}_2,\bq{x}_3,\bq{x}_4)[i,j],D(\bq{x}_2,\bq{x}_3,\bq{x}_4)[i,j],D(\bq{x}_3,\bq{x}_4)[i,j]
			,{x}_4[i,j]\big).
\end{align*} 
We reformulate the tandem queuing mappings above  by repeated applications of \eqref{q259}. 
\begin{equation}\label{Q3}
	I_3:=D(\bq{x}_3,\bq{x}_4), \,\,\,J_3:=R(\bq{x}_3,\bq{x}_4).  
\end{equation}
\begin{equation}\label{Q2}
	I_2:=D(\bq{x}_2,\bq{x}_3,\bq{x}_4)=D(\bq{x}_2,J_3,I_3), \,\,\, J_2:=R\big(D(\bq{x}_2,J_3),I_3\big). 
\end{equation}
\begin{equation*}
	\begin{aligned}
		I_1:&=D(\bq{x}_1,\bq{x}_2,\bq{x}_3,\bq{x}_4)
		=D(\bq{x}_1,\bq{x}_2,J_3,I_3)
		=D\big(\bq{x}_1,R(\bq{x}_2,J_3),D(\bq{x}_2,J_3),I_3\big)\\
	&=D\big(\bq{x}_1,R(\bq{x}_2,J_3),R\big(D(\bq{x}_2,J_3),I_3\big),D\big(D(\bq{x}_2,J_3),I_3\big)\big)=D\big(\bq{x}_1,R(\bq{x}_2,J_3),J_2,I_2\big). 
	\end{aligned}
\end{equation*}
Abbreviate the queue lengths produced by  these mappings  at time $i-1$ as follows: 
\[  
\Qu_3:=Q_{i-1}(\bq{x}_3,\bq{x}_4), \quad 
\Qu_2:=Q_{i-1}\big(D(\bq{x}_2,J_3),I_3\big)
\quad\text{and}\quad 
\Qu_1:=Q_{i-1}(D(\bq{x}_1,R(\bq{x}_2,J_3),J_2),I_2).
\] 
Next we express the event $\TwoRare^{[i,j]}(\bar{\lambda})$ in terms of the queue lengths   $(\Qu_1,\Qu_2,\Qu_3)$ and auxiliary walks that start at time $i$ and are defined for times $m\ge i$ as  follows: 
	\begin{align*}
	S_3(m)&:={x}_4[i,m]-{x}_3[i,m] \\
		S_2(m)&:=I_3[i,m]-D(\bq{x}_2,J_3)[i,m], 
		\\
		S_1(m)&:=I_2[i,m]-D\big(\bq{x}_1,R(\bq{x}_2,J_3),J_2\big)[i,m].
	\end{align*}
Write  $S^*_k(m):=\max_{i\leq l\leq m} S_k(l)$ for  the running maximum of the walk $S_k$.

First we claim that 
\begin{equation}\label{eq13}
	\{D(\bq{x}_3,\bq{x}_4)[i,j]
	<{x}_4[i,j]\}\subseteq \{\Qu_3<S^*_3(j)\}.
\end{equation}
To see this, note that the map  $(\bq{x}_3,\bq{x}_4)\mapsto D(\bq{x}_3,\bq{x}_4)[i,j]$  labels each service time in $\bq{x}_4$ as a departure or an unused service.  On the event on the left side of \eqref{eq13}, there are unused service times in ${x}_4[i,j]$. If $l\in[i,j]$ is such that ${x}_4(l)$ is an unused service, then the queue must have emptied prior to time $l$, and thereby $Q_{i-1}(\bq{x}_3,\bq{x}_4)+{x}_3[i,l]<{x}_4[i,l]$. This implies \eqref{eq13}. The same argument implies
\begin{equation*}
	\begin{aligned}
		&\{D(\bq{x}_2,\bq{x}_3,\bq{x}_4)[i,j]
	<D(\bq{x}_3,\bq{x}_4)[i,j]\}\subseteq \{\Qu_2<S^*_2(j)\}\\[3pt] 
\text{and }\quad 	& \{D(\bq{x}_1,\bq{x}_2,\bq{x}_3,\bq{x}_4)[i,j]
	<D(\bq{x}_2,\bq{x}_3,\bq{x}_4)[i,j]\}\subseteq \{\Qu_1<S^*_1(j)\}.
	\end{aligned}
\end{equation*}

The development up to this point gives us the following bound: 
\begin{equation}\label{eq14}
	\Pp\big(\TwoRare^{[i,j]}(\bar{\lambda})\big)\leq \Pp\big(\ind_{\Qu_1<S^*_1(j)}+\ind_{\Qu_2<S^*_2(j)}+\ind_{\Qu_3<S^*_3(j)}\geq 2\big).
\end{equation}
To take advantage of \eqref{eq14}, we need two more ingredients: (i)  a process that dominates $S^*_1$, $S^*_2$, and $S^*_3$ and that is independent of $\{\Qu_l\}_{l\in\{1,2,3\}}$ and (ii) the independence of $\Qu_1,\Qu_2$ and $\Qu_3$.

 For $\bq{x}\in\Qs_1$, $\rho \in(0,1)$, and integers $i\leq j$, alter the drift of the walk ${x}[i,j]$ by defining
\begin{equation}\label{rw7}
	{x}^\rho[i,j]={x}[i,j]-\rho(j-i+1).
\end{equation} 
For $n\in\N$ and $i\in\Z$, we use the altered queuing map $D_{i,0}$ from \eqref{eq42}. Then we have the bound
\begin{equation}\label{eq20}
	\begin{aligned}
	\sup_{l\in[i,m]}S_2(l)&=\sup_{l\in[i,m]}\Big[D(\bq{x}_3,\bq{x}_4)[i,l]-D(\bq{x}_2,R(\bq{x}_3,\bq{x}_4))[i,l]\Big]\leq \sup_{l\in[i,m]}\Big[{x}_4[i,l]-D_{i,0}(\bq{x}_2,\bq{x}_3))[i,l]\Big]\\
	&\stackrel{\eqref{bq3b}}{\leq}  2\sum_{k=2}^4\sup_{l\in [i,m]}|{x}_{k}^{\rho}[i,l]|.
	\end{aligned}
\end{equation}
The first inequality used part  \ref{bq0} of Lemma \ref{lem:q} and \eqref{bq1}--\eqref{bq4}.  For $S_1$ we have the bound
\begin{equation}\label{eq21}
	\begin{aligned}
	\sup_{l\in[i,m]}S_1(l)&=\sup_{l\in[i,m]}\Big[D(\bq{x}_2,\bq{x}_3,\bq{x}_4)[i,l]-D\big(\bq{x}_1,R(\bq{x}_2,J_3),J_2\big)[i,l]\Big]\\
	&\leq \sup_{l\in[i,m]}\Big[{x}_4[i,l]-D_{i,0}\big(\bq{x}_1,\bq{x}_2,D_{i,0}(\bq{x}_2,\bq{x}_3)\big)[i,l]\Big]\leq  6\sum_{k=1}^4\sup_{l\in [i,m]}|{x}_{k}^{\rho}[i,l]|,
	\end{aligned}
\end{equation}
where we used (via Lemma~\ref{lem:q})
\be\label{t100} \begin{aligned}
D\big(\bq{x}_1,R(\bq{x}_2,J_3),J_2\big)[i,m]&= D\big(\bq{x}_1,R(\bq{x}_2,J_3),R\big(D(\bq{x}_2,J_3),I_3\big)\big)[i,m]\\
&\geq D_{i,0}\big(\bq{x}_1,\bq{x}_2,D(\bq{x}_2,J_3)\big)[i,m]\geq D_{i,0}\big(\bq{x}_1,\bq{x}_2,D_{i,0}(\bq{x}_2,\bq{x}_3)\big)[i,m].
\end{aligned}\ee
Combining the bounds for the walks gives, for $m\in\{1,2,3\}$, 
\begin{equation}\label{eq31}
	S^*_m(j)\leq S^*(j):=6\sum_{k=1}^4\max_{l\in [i,j]}|{x}_{k}^{\rho}[i,l]|,  \qquad j\ge i . 
\end{equation}
The process  $S^*$ is a function of the inputs after time $i-1$ and hence independent of $\{\Qu_l\}_{l\in\{1,2,3\}}$.  \eqref{eq14} implies 
\begin{equation}\label{eq27}
\Pp\big(\TwoRare^{[i,j]}(\bar{\lambda})\big)\leq \Pp\big(\ind_{\Qu_1<S^*(j)}+\ind_{\Qu_2<S^*(j)}+\ind_{\Qu_3<S^*(j)}\geq 2\big). 
\end{equation}

We turn to verify  the independence of $\{\Qu_1,\Qu_2, \Qu_3\}$. From \eqref{eq11} and \eqref{Q3}
\begin{equation*}
	\{I_3(l)\}_{l<i}, \,\,\, 	\{J_3(l)\}_{l<i}, \,\,\,\bq{x}_1 \,\,\text{ and } \,\,\,\bq{x}_2 \,\,\, \text{ are jointly independent of $\Qu_3$.}
\end{equation*}
From \eqref{Q2}, the pair  $(\{I_2(l)\}_{l<i},\{J_2(l)\}_{l<i})$ is a function of $\{I_3(l)\}_{l<i}$, $\{J_3(l)\}_{l<i}$,  and  $\bq{x}_2$. This implies that $\Qu_3$ is independent of
\begin{equation}\label{eq22}
	\big(\bq{x}_1,\bq{x}_2,\{I_2(l)\}_{l<i},\{J_2(l)\}_{l<i},\{I_3(l)\}_{l<i},\{J_3(l)\}_{l<i}\big).
\end{equation}
As $(\Qu_2,\Qu_1)$ is a function of \eqref{eq22}, we conclude that $\Qu_3$ is independent of $(\Qu_2,\Qu_1)$. We are left to show that $\Qu_1$ is independent of $\Qu_2$. First note that the map
\begin{equation}\label{eq26}
	(\bq{x}_2,\bq{x}_3,\bq{x}_4) \mapsto \big(R(\bq{x}_2,J_3),D(\bq{x}_2,J_3),I_3\big)
\end{equation}
is obtained by applying the pair map $(R,D)$
twice, first to $(\bq{x}_3,\bq{x}_4)$ to obtain $(\bq{x}_2,J_3,I_3)$ and then to  $(\bq{x}_2,J_3)$. By \eqref{disId}    each application of $(R,D)$ leaves  three components of the output  vector independent. In particular,
\begin{equation}\label{eq24}
	R(\bq{x}_2,J_3) \,\,\,\text{ is independent of  } \,\,\,\big(D(\bq{x}_2,J_3),I_3\big).
\end{equation}
From \eqref{Q2} and \eqref{eq11}, 
\begin{equation}\label{eq23}
\text{the pair } \ 	(\{J_2(l)\}_{l<i},\{I_2(l)\}_{l<i})\,\,\, \text{ is independent of } \,\,\,\Qu_2.
\end{equation}  
Combining \eqref{eq24} and \eqref{eq23}, 
\begin{equation}\label{eq25}
	\big(\bq{x}_1,R(\bq{x}_2,J_3),\{J_2(l)\}_{l<i}\,,\{I_2(l)\}_{l<i}\big)\,\,\,\text{ is independent of  } \,\,\,\Qu_2.
\end{equation}
As  a function of the collection of random variables in parentheses above, 
$\Qu_1$ is independent of $\Qu_2$.    The independence of $\{\Qu_1,\Qu_2, \Qu_3\}$ has been proved.

By the Burke property (Lemma \ref{lm:Burke}),   $\Qu_i\sim \text{Geom}\Big(\frac{\lambda_{i+1}}{(1-\sum_{l=1}^{i}\lambda_l)(\sum_{l=1}^{i+1}\lambda_l)}\Big)$. As in the statement of the lemma we are proving, let $\lambda^*=\max\{\lambda_2,\lambda_3,\lambda_4\}$ and 
\[ \Delta=\frac{\lambda^*}{(1-\sum_{l=1}^{3}\lambda_l)\lambda_1} \; \ge \;   \frac{\lambda_{i+1}}{(1-\sum_{l=1}^{i}\lambda_l)(\sum_{l=1}^{i+1}\lambda_l)} \quad\text{ for } i\in\{1,2,3\}. 
\]
Let $\hat{\Qu}_1,\hat{\Qu}_2,\hat{\Qu}_3$ be i.i.d.\  random variables with distribution $\text{\rm Geom}(\Delta)
$. Since   the probability of success increased, $(\Qu_1,\Qu_2,\Qu_3)$ stochastically dominates $(\hat{\Qu}_1,\hat{\Qu}_2,\hat{\Qu}_3)$. From this and  a union bound, 
\begin{align*}
\Pp\big(\ind_{\Qu_1<s}+\ind_{\Qu_2<s}+\ind_{\Qu_3<s}
\geq 2\big) &\leq \Pp\big(\ind_{\hat{\Qu}_1<s}+\ind_{\hat{\Qu}_2<s}+\ind_{\hat{\Qu}_3<s}\geq 2\big)\leq 3\Pp\big(\hat{\Qu}_1<s,\hat{\Qu}_2<s\big)\\
&\le 3\Pp\big(\hat{\Qu}_1<s\big)^2.
\end{align*} 
Substitute the last bound into  \eqref{eq27} to get, for $\MTR\geq2$, 
\begin{equation}\label{eq10}
\begin{aligned}
&\Pp\big(\TwoRare_j^i(\bar{\lambda})\big)\leq 3\sum_{s=1}^\infty\Pp\big(\hat{\Qu}_1<s\big)^2\,\Pp\big(S^*(j)=s\big)\\
&=3\sum_{s=0}^\infty\sum_{l=1}^{\MTR}\Pp\big(\hat{\Qu}_1<s\MTR+l\big)^2\,\Pp\big(S^*(j)=s\MTR+l\big)\leq 3\sum_{s=0}^\infty\Pp\big(\hat{\Qu}_1<s\MTR+\MTR\big)^2\sum_{l=1}^{\MTR}\Pp\big(S^*(j)=s\MTR+l\big)\\
&\leq  3\sum_{s=1}^\infty\big(1-(1-\Delta )^{s\MTR}\big)^2\,\Pp\big(S^*(j)> (s-1)\MTR\big).
\end{aligned}
\end{equation}
It remains to control the $S^*$ tail  probability above. 
By the definition  \eqref{eq31} of $S^*$, for $\tau>0$, 
\begin{equation}\label{eq29}
	\Pp\big(S^*(j)>\tau\big)=\Pp\Big(6\sum_{k=1}^4\max_{l\in [i,j]}|{x}^\rho_k[i,l]|>\tau\Big)\leq \sum_{k=1}^4\Pp\big(\max_{l\in [i,j]}|{x}^\rho_k[i,l]|>\tau/24\big).
\end{equation}
Each probability in the last sum above is bounded as follows. Let $t\ge0$.  
\be\label{eq29.5}\begin{aligned}
    &\Pp\Big\{\max_{l\in [i,j]}|{x}^\rho_k[i,l]|>t+\Big|\sum_{m=1}^k\lambda_m-\rho \Big|(j-i+1)\Big\}\\
    &\leq \Pp\Big\{\max_{l\in [i,j]}\big|{x}^\rho_k[i,l]-(l-i+1)\big(\sum_{m=1}^k\lambda_m-\rho\big) \big|>t\Big\}\\
    &=\Pp\Big\{\max_{l\in [i,j]}\big|{x}_k[i,l]-(l-i+1)\sum_{m=1}^k\lambda_m \big|>t\Big\}  
    \le 3\,\max_{l\in[i,j]}\Pp\Big\{\big|{x}_k[i,l]-(l-i+1)\sum_{m=1}^k\lambda_m \big|>t/3\Big\}\\
    &\leq 6\,\max_{l\in[i,j]} \exp\Big(-\frac{2t^2}{9(l-i+1)}\Big)
    = 6 \exp\Big(-\frac{2t^2}{9(j-i+1)}\Big). 
\end{aligned}\ee
The first inequality above is elementary and the first equality cancels the $\rho$-terms.  
Etemadi's inequality \cite[Theorem 22.5]{bill-95-prob-meas} moves the maximum outside the probability.  The last inequality is 
Hoeffding's \cite[Theorem 2.8]{bouc-lugo-mass-13}. 

Apply the last   bound  to the  $k$-term in the last sum of \eqref{eq29} with 
$t=\tfrac \tau{24}-|\sum_{m=1}^k\lambda_m-\rho \tspb|\cdot(j-i+1)$.  If $t\ge0$ then the bound gives the $k$-term after the first inequality below. If $t<0$ then the bound below is automatically valid because it bounds a  probability with $6e^0$.  
\begin{equation}\label{eq30}
		\begin{aligned}
		&\Pp\big(S^*(j)>\tau\big)\leq 6\sum_{k=1}^4\exp\Bigg(-\frac{2\big(\big[\tfrac \tau{24}-\big|\sum_{m=1}^k\lambda_m-\rho \big|\cdot(j-i+1)\big]_+\big)^2}{9(j-i+1)}\Bigg)\\
		&\leq 24\exp\Bigg(-\frac{2\Big(\big[\tfrac \tau{24}-\big|\sum_{m=1}^4\lambda_m-\rho \big|\cdot(j-i+1)-|\lambda_1-\rho |\cdot(j-i+1)\big]_+\Big)^2}{9(j-i+1)}\Bigg).
		\end{aligned}
\end{equation}
Above we used the inequality  $\big|\sum_{m=1}^k\lambda_m-\rho \big|\leq \big|\sum_{m=1}^4\lambda_m-\rho \big|+|\lambda_1-\rho|$, valid  for all $k\in\{1,2,3,4\}$ because the convex function $|x-\rho|$ achieves its maximum at an endpoint of an interval. Substitute  \eqref{eq30}  into  \eqref{eq10} to obtain the desired estimate \eqref{tr}. The proof of the lemma is complete.  
\end{proof}


We introduce notation for discretizing continuous customer classes. 
Let $\mu_0\geq 1$ and $\en\in\N$. Define 
\be\label{eq37}\begin{aligned}
\cE=\cE(\en)&=[-\mu_0,\mu_0]\cap \{i2^{-\en}:i\in\Z\}\,, \qquad  
\Ind(\en)=2^\en\cE=\{-\fl{2^M\mu_0},\dotsc,\fl{2^M\mu_0}\} \,, \\[3pt]
i_{\text{min}}(\en)&=\min\Ind(\en)=-\fl{2^M\mu_0}
\qquad\text{and}\qquad
i_{\text{max}}(\en)=\max\Ind(\en)=\fl{2^M\mu_0}.
\end{aligned}\ee
The interval $[-\mu_0,\mu_0]$ remains fixed in the calculations while $\en$ varies, but 
the dependence on $\en$ will also be typically suppressed from the notation. 
Note the bound on the size of $\cE$: 
\be\label{cEsize}  |\cE| =i_{max}-i_{min}+1 \le 2^{M+1}\mu_0+1. \ee
For $|\cE|$ different customer classes  define the  vector
$\bar{\lambda}^{M,N}=(\lambda_1^{M,N},\dotsc,\lambda_{|\cE|}^{M,N})$ of Bernoulli densities that are small perturbations of density $\tfrac{1+v}2$: 
\begin{equation}\label{eq35}\begin{aligned}
&\lambda_1^{M,N}=\tfrac{1+v}2+ (1-v^2) i_{\text{min}}2^{-M}N^{-1/2}\\[3pt] 
& \lambda_i^{M,N}=(1-v^2)2^{-M} N^{-1/2} \qquad \text{ for } \  i\in\{2,3,\dotsc,|\cE|\}.
\end{aligned}\end{equation}
The densities are centered around $\tfrac{1+v}2=\Pp(U_j\le v)$, corresponding to the centering of the speed process around $v$. 
 Let $\bq{v}^{M,N}\in\Qs_{|\cE|} $ have the invariant  multiclass distribution $\mu^{\bar\lambda^{M,N}}$,  as defined in  \eqref{mu}. 
In particular, for $m\in\lzb1,|\cE|\,\rzb$, customers of classes $\lzb1,m\rzb$ have density 
$\sum_{i=1}^m \lambda_i^{M,N}=\tfrac{1+v}2+ (1-v^2)(i_{\text{min}}+m-1)2^{-M}N^{-1/2}$. 
 Let  $x_0\geq 1$ and  $x_0^N=\tfrac{2x_0}{1-v^2}N$. 
 
 For $l \in\lzb1,|\cE|-3\rzb$, define the event  that among   the  three consecutive classes $\{l+1,l+2,l+3\}$,   at least two appear  in $\bq{v}^{M,N}$ in the time interval $[-x_0^N,x_0^N]$:  
	\begin{align*}
	\mathcal{A}^{M,N}_{l}:=
	\Big\{\ind\{\Cls^{[-x_0^N,x_0^N]}_{l}(\bq{v}^{M,N})<&\Cls^{[-x_0^N,x_0^N]}_{l+1}(\bq{v}^{M,N})\}+\ind\{\Cls^{[-x_0^N,x_0^N]}_{l+1}(\bq{v}^{M,N})<\Cls^{[-x_0^N,x_0^N]}_{l+2}(\bq{v}^{M,N})\}\\
	&\qquad\qquad+\ind\{\Cls^{[-x_0^N,x_0^N]}_{l+2}(\bq{v}^{M,N})<\Cls^{[-x_0^N,x_0^N]}_{l+3}(\bq{v}^{M,N})\}\geq2\Big\}.
	\end{align*}
In our development  class 1 is not rare and hence it is omitted from the options above.  
Let 
	\begin{equation}\label{eq32}
		\mathcal{A}^{M,N}=\bigcup_{l=1}^{|\cE|-3}\mathcal{A}^{M,N}_{l}
	\end{equation}
 be   the event that among some set of   three consecutive classes in $\lzb2,|\cE|\tspb\rzb$,  at least two appear  in the time interval $[-x_0^N,x_0^N]$. 
 

\begin{lemma}\label{lem:mjub}
	For  $\mu_0,x_0\geq 1$ there exists a constant $C=C(v,\mu_0,x_0)$ such that, whenever  $N>(8\mu_0)^2$ $\vee$ $2^{-2M+8}\bigl(\tfrac{1+v}{1-v}\bigr)^2$,
 we have the bound  \tsp   $\Pp\big(\mathcal{A}^{M,N}\big)\leq  C(v,\mu_0,x_0)2^{-M}.$
\end{lemma}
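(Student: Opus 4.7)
The strategy is to combine a union bound over $l \in \lzb 1,|\cE|-3 \rzb$ with Lemma \ref{lem:TR}. Each event $\mathcal{A}^{M,N}_l$ concerns only whether customers of classes in $\{l+1, l+2, l+3\}$ appear in $[-x_0^N, x_0^N]$, so I would first reduce to a four-class setting by coalescing classes $\{1,\dotsc,l\}$ into a single "class $1$" and treating classes in $\{l+4, \dotsc, |\cE|, \infty\}$ as holes. By the projection/merging property of the Ferrari-Martin multiclass invariant distributions (Theorem \ref{thm:FM}), the resulting marginal has distribution $\mu^{\bar\lambda^{(4)}}$ with
\[
\bar\lambda^{(4)} = \bigl(\tfrac12 + (i_{\min}+l-1)2^{-M} N^{-1/3},\; 2^{-M} N^{-1/3},\; 2^{-M} N^{-1/3},\; 2^{-M} N^{-1/3}\bigr),
\]
and on this event $\mathcal{A}^{M,N}_l$ matches $\TwoRare^{[-x_0^N,\,x_0^N]}(\bar\lambda^{(4)})$ exactly.

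Under the hypothesis $N>(4\mu_0)^3$, we have $\mu_0 N^{-1/3} \le 1/4$, so one can verify that $\lambda_1^{(4)} \ge 1/4$, $1-\sum_{i=1}^3\lambda_i^{(4)} \ge 1/4$, and hence $\Delta^{(4)} \le 16\cdot 2^{-M} N^{-1/3}$. The optimal choice of $\rho$ in Lemma \ref{lem:TR} is $\rho=\lambda_1^{(4)}$, which makes $|\lambda_1^{(4)}-\rho|=0$ and $\bigl|\sum_{m=1}^4 \lambda_m^{(4)}-\rho\bigr|=3\cdot 2^{-M}N^{-1/3}$. With $j-i+1 = 4x_0 N^{2/3}+1$, the total drift offset in the exponent is bounded by $C_1 x_0 \cdot 2^{-M} N^{1/3}$. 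I would then pick $K = \lceil c_1 N^{1/3}\rceil$ with $c_1=c_1(x_0)$ large enough that $sK/24 \ge 2 C_1 x_0 \cdot 2^{-M}N^{1/3}$ for every $s\ge 1$ (using $2^{-M}\le 1$); the condition $N > 2^{-3M+12}$ guarantees $K\ge 2$ as required by Lemma \ref{lem:TR}.

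With these choices, the positive part in the Gaussian exponent of Lemma \ref{lem:TR} is at least $sK/48$, giving an exponent $\ge c_2(x_0) s^2$. Using Bernoulli's inequality,
\[
\bigl(1-(1-\Delta^{(4)})^{sK}\bigr)^2 \le (sK\Delta^{(4)})^2 \le c_3(x_0)^2\, s^2 \cdot 2^{-2M},
\]
so that
\[
\Pp(\mathcal{A}^{M,N}_l) \le 72 \, c_3(x_0)^2 \, 2^{-2M} \sum_{s=1}^\infty s^2 e^{-c_2(x_0) s^2} \le C_2(x_0)\, 2^{-2M}.
\]
A union bound over $l\in\lzb 1,|\cE|-3\rzb$, using $|\cE|\le 2^{M+1}\mu_0 + 1$ from \eqref{cEsize}, yields
\[
\Pp(\mathcal{A}^{M,N}) \le (|\cE|-3)\cdot C_2(x_0)\, 2^{-2M} \le C(\mu_0,x_0)\, 2^{-M},
\]
as desired.

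The main obstacle is the tuning of the free parameters $\rho$ and $K$ in Lemma \ref{lem:TR} so that the prefactor $(1-(1-\Delta)^{sK})^2$ produces the needed $2^{-2M}$ savings per $l$ while the exponential term retains nontrivial Gaussian decay in $s$; since the union bound over $l$ costs a factor of order $2^M$, missing the optimal scaling by even a constant power of $2^{-M}$ would be fatal. Choosing $\rho=\lambda_1^{(4)}$ is what eliminates the large drift term $|\lambda_1-\rho|(j-i+1)$ of order $\mu_0 N^{1/3}$ that would otherwise overwhelm the Gaussian decay, and taking $K$ proportional to $N^{1/3}$ is what simultaneously yields $\Delta K = O(2^{-M})$ and exponent of order $s^2$ independent of $N$.
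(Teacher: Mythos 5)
Your proposal follows the paper's structure exactly: relabel to a four-class problem, invoke Lemma~\ref{lem:TR}, and sum/union-bound. The one genuine divergence is in the choice of the free parameter $\rho$: you take $\rho=\lambda_1^{(4)}$, which kills the term $|\lambda_1-\rho|(j-i+1)$ outright and leaves a drift offset of order $x_0\,2^{-M}N^{1/3}$, compensated by enlarging the constant in $K=\lceil c_1N^{1/3}\rceil$. The paper instead takes $\rho=1/2$ and $\MTR=N^{1/3}$; then \emph{both} drift terms are bounded by $\mu_0 N^{-1/3}$, the offset is $O(\mu_0 x_0 N^{1/3})$, and the factor $N^{1/3}$ pulls out of the positive part so that $N^{2/3}$ cancels against $(j-i+1)$ in the denominator, yielding an exponent that is a fixed Gaussian in $s$ depending only on $(\mu_0,x_0)$. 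Both routes are valid; yours trades a larger $K$-constant for a more transparent elimination of one drift term, and the paper's trades a larger residual drift for the cleaner choice $\MTR=N^{1/3}$.

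Two small points to tighten. First, the lemma has $(s-1)\MTR/24$, not $s\MTR/24$, so the claim ``the positive part is at least $sK/48$'' fails at $s=1$ where the bracket is exactly $0$; however the $s=1$ summand is harmless since the exponential is $\le 1$ and the prefactor $\bigl(1-(1-\Delta)^K\bigr)^2 \le (K\Delta)^2 = O(2^{-2M})$ already carries the needed decay, so the conclusion stands — you just need to phrase the Gaussian lower bound on the bracket for $s\ge 2$ only. Second, the relabeling of the $|\cE|$-class configuration into a $4$-class one with law $\mu^{\bar\lambda^{(4)}}$ is Lemma~\ref{lem:Phi} (the compatibility of the merging map $\Phi$ from \eqref{Phi} with the multiclass stationary measures), not Theorem~\ref{thm:FM} alone; the latter does not by itself give the marginalization property you need.
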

\begin{proof}
	
	We apply the estimate from Lemma \ref{lem:TR} to each event in the union \eqref{eq32}.  Let $i'\in \lzb1,|\cE|-3\rzb$ and utilize   the map $\Phi$ from \eqref{Phi} to relabel the classes $\{i', i'+1,i'+2,i'+3\}$ as $\{1,2,3,4\}$:  the sequence  $\bq{v}_4=\Phi[\bq{v}^{M,N},(i',i'+1,i'+2,i'+3)]$ obeys the parameter vector $\bar{\rho}^{M,N}=(\rho^{M,N}_1,\rho^{M,N}_2,\rho^{M,N}_3,\rho^{M,N}_4)$ with  coordinates 
\begin{equation*}
	\begin{aligned}
	\rho_1^{M,N}=\tfrac{1+v}2+ (1-v^2)(i_{\text{min}}+i'-1)2^{-M}N^{-1/2} \quad\text{and}\quad 
	\rho_l^{M,N}=(1-v^2)2^{-M}N^{-1/2} \quad \text{ for } \ l\in\{2,3,4\}.
	\end{aligned}
\end{equation*}
By  Lemma \ref{lem:Phi}, this operation preserves the multiclass distribution, and thereby  
\begin{equation}\label{eq33}
	\Pp(\mathcal{A}^{M,N}_{i'})=\Pp\big(\TwoRare^{[-x_0^N, x_0^N]}(\bar{\rho}^{M,N})\big). 
\end{equation}

 Apply  \eqref{tr} with $\MTR=N^{1/2}$, $\lambda_m=\rho^{M,N}_m$, $[i,j]=[-\fl{x_0^N}, \fl{x_0^N}]$,  and $\rho=\frac{1+v}2$.  Our hypothesis gives $\mu_0 N^{-1/2}<1/4$ which we use repeatedly.   In this setting, the success probability $\Delta$ satisfies 
 \begin{align*}
\Delta=\frac{(1-v^2)2^{-M}N^{-1/2}}{(1-\sum_{l=1}^{3}\rho^{M,N}_l)\rho^{M,N}_1}   
\le \frac{(1-v^2)2^{-M}N^{-1/2}}{(\frac12-\frac{v}2 -(1-v^2) \mu_0N^{-1/2})^2} < 
 2^{-M+4}\tspb \tfrac{1+v}{1-v} \tspb N^{-1/2}
 =2^{-M+4}\tspb b(v) \tspb N^{-1/2}
 \end{align*}
 where we abbreviated $b(v)=\tfrac{1+v}{1-v}$. 
Combine this with the inequality $(1+\frac{x}{n})^n\geq(1+x)$ for $|x|\leq n$. Then for $N\ge2^{-3M+12}b(v)^3$, 
\begin{align*}
\big(1-(1-\Delta)^{sN^{1/2}}\big)^2\le \big(1-(1-2^{-M+4}b(v)N^{-1/2} )^{sN^{1/2}}\big)^2   \le 
2^{-2M+8}b(v)^2 s^2. 
\end{align*}
Note that 
\[ 
	 \Big|\sum_{m=1}^4 \rho^{M,N}_m-\tfrac{1+v}2\Big| \vee \big|\rho^{M,N}_1-\tfrac{1+v}2\big|<   \mu_0 N^{-1/2}.
\] 
With these auxiliary bounds, \eqref{tr} yields this estimate:  
\begin{equation*}\begin{aligned} 
	&\Pp\big(\TwoRare^{[-x_0^N, x_0^N]}(\bar{\rho}^{M,N})\big)\\
	&\qquad 
	\leq 72\sum_{s=1}^\infty
	\big(1-(1-\Delta)^{sN^{1/2}}\big)^2
	\exp\Bigg\{-\frac{2\Big(\Big[\tfrac {(s-1)N^{1/2}}{24} -2\mu_0N^{-1/2}(2x_0^N+1)\Big]_+\Big)^2}{9(2x_0^N+1)}\Bigg\} \\
	&\qquad 
	\leq 72\cdot2^{-2M+8}b(v)^2\sum_{s=1}^\infty s^2  \exp\Bigg\{-\frac{2\Big(\big[\tfrac {s-1}{24} -2\mu_0(4x_0(1-v^2)^{-1}+1)\big]_+\Big)^2}{9(4x_0(1-v^2)^{-1}+1)}\Bigg\}\leq C'(v, \mu_0,x_0)2^{-2M}.
\end{aligned} \end{equation*}
 

A union bound in \eqref{eq32}, bound \eqref{cEsize} on $|\cE|$,  equality \eqref{eq33}, and the bound in the last display yield 
\begin{equation*}
	\Pp\big(\mathcal{A}^{M,N}\big)
	\leq (|\cE|-3)  C'(v,\mu_0,x_0)2^{-2M}
	\leq   C''(v,\mu_0,x_0)2^{-M}.
\end{equation*}
The proof is complete.
\end{proof}


Recall $\wt H^N$ from \eqref{Href}, and the restriction $\wt H^{N,x_0}_\mu=\wt H^N_\mu\vert_{[-x_0, x_0]}$ 
for $x_0\geq 1$.
$\mu\mapsto \wt H^{N,x_0}_\mu$ is a function taking values in $C[-x_0,x_0]$. We say $\mu\in\R$ is a \textit{jump point} of the function $\wt H^{N,x_0}$ if $\wt H^{N,x_0}_{\mu-}\neq \wt H^{N,x_0}_\mu$, i.e.\ there exists $x\in[-x_0,x_0]$ such that 
\begin{equation*}
	\lim_{h		\searrow \,0}\wt H^{N,x_0}_{\mu-h}(x)\neq \wt H^{N,x_0}_\mu(x).
\end{equation*} 
For $\mu_0\geq 1$,  define the random variable that registers the distance between the closest pair of distinct jump points of the process $\wt H^{N,x_0}$ as 
\begin{equation*}
	\ClsJmp_N^{x_0,\mu_0}:=\inf\{|\mu_1-\mu_2|:\text{$\mu_1,\mu_2$ are distinct jump points of $\mu\mapsto \wt H^{N,x_0}_\mu$ in $(-\mu_0,\mu_0)$}\}. 
\end{equation*}
Set $\ClsJmp_N^{x_0,\mu_0}=\infty$ if there is at most one jump in $(-\mu_0,\mu_0)$. 
\begin{lemma}\label{lm:cljp}
	For all  $\mu_0,x_0\geq 1$ and $\delta>0$  there exists a constant $C=C(v,\mu_0,x_0)$ such that, whenever $N> (8\mu_0)^2\vee 2^{8}\delta^2\bigl(\tfrac{1+v}{1-v}\bigr)^2$, 
 we have $\Pp\big(\ClsJmp_N^{x_0,\mu_0}\leq \delta\big)\leq C\delta$. 
\end{lemma}
\begin{proof}
We deduce the bound from the decomposition
\be\label{CJ700} 
\Pp\big(\ClsJmp_N^{x_0,\mu_0}\leq \delta\big) \le   \sum_{M=M_0}^\infty \Pp\big(2^{-M}<\ClsJmp_N^{x_0,\mu_0}\leq 2^{-M+1}\big)
\ee
where $M_0$ satisfies $2^{-M_0}<\delta\le 2^{-M_0+1}$.  
	 On the event $2^{-M}<\ClsJmp_N^{x_0,\mu_0}\leq 2^{-M+1}$,    there exists $i_e\in \Ind$ (recall \eqref{eq37})  such that two jump points $\mu_1,\mu_2$ satisfy one of these two cases:
		\begin{align*}
			\text{\rm Case 1:} \quad &\mu_1\in \big[2^{-M}i_e,2^{-M}(i_e+1)\big)\text{\,\,and\,\,\,}\mu_2\in \big[2^{-M}(i_e+1),2^{-M}(i_e+2)\big),  \\[3pt]  
			\text{\rm Case 2:} \quad &\mu_1\in \big[2^{-M}i_e,2^{-M}(i_e+1)\big)\text{\,\,and\,\,\,}\mu_2\in \big[2^{-M}(i_e+2),2^{-M}(i_e+3)\big).
	\end{align*}

Define the queuing configuration  $\bq{w}^{M,N}\in \Qs_{|\cE|}$ by 
	\begin{equation*}
		\begin{cases}
			w^{M,N}(j)\leq k & \text{ if  \,\,$U_j\leq v+(1-v)^2(i_{min}+k-1)2^{-M}N^{-1/2}$ and $  k\in\{1,\dotsc,|\cE|\}$}\\
			w^{M,N}(j)=\infty & \text{ if  \,\,$U_j>  v+(1-v)^2i_{max}\,2^{-M}N^{-1/2}$ }. 
		\end{cases}
	\end{equation*}
	That is, $\bq{w}^{M,N}$   assigns a customer of class  $k\in\{2,\dotsc,|\cE|\}$ to position  $j$ if and only if \[ v+(1-v)^2(i_{min}+k-2)2^{-M}N^{-1/2}<U_j\leq v+(1-v)^2(i_{min}+k-1)2^{-M}N^{-1/2}. \]
	From Lemma \ref{lem:FU},   $\bq{w}^{M,N}\sim \mu^{\bar{\lambda}^{M+1,N}}$ for $\bar{\lambda}^{M+1,N}$ in \eqref{eq35}. The superscript is $M+1$ instead of $M$ because for $U_j$ probability = interval length$/2$. The two cases above imply two cases for the number of different classes in $\bq{w}^{M,N}$. There exists $j_e\in \{1,\dotsc,|\cE|-3\}$ such that one of the two cases below holds:  
		\begin{align}\label{eq38}
			\Cls^{[-x_0^N,x_0^N]}_{j_e}(\bq{w}^{M,N}) &<\Cls^{[-x_0^N,x_0^N]}_{j_e+1}(\bq{w}^{M,N})<\Cls^{[-x_0^N,x_0^N]}_{j_e+2}(\bq{w}^{M,N}),  \\[3pt]  
		 \label{eq39}
			\Cls^{[-x_0^N,x_0^N]}_{j_e}(\bq{w}^{M,N}) &<\Cls^{[-x_0^N,x_0^N]}_{j_e+1}(\bq{w}^{M,N})<\Cls^{[-x_0^N,x_0^N]}_{j_e+3}(\bq{w}^{M,N})
		\end{align}
	Recall \eqref{eq32} and note that 
	\begin{equation*}
		\{\text{there exists $j_e\in \{1,\dotsc,|\cE|-3\}$ such that  either \eqref{eq38} or \eqref{eq39} holds}\}\subseteq \mathcal{A}^{M+1,N}.
	\end{equation*} 
With $\delta>2^{-M}$, our hypothesis on $N$ satisfies the assumption of  Lemma \ref{lem:mjub}. We get 
	\begin{equation*}
		\Pp\big(2^{-M}<\ClsJmp_N^{x_0,\mu_0}\leq 2^{-M+1}\big)
		\le \Pp(\mathcal{A}^{M+1,N}) \leq  C'(v,\mu_0,x_0)2^{-M}.
	\end{equation*}
Substitute this back into \eqref{CJ700} to complete the proof of the lemma.  
\end{proof}
 
We verify the first piece of process-level weak convergence.  Recall the modulus  $\omega$ in~\eqref{moc}.

\begin{proposition}\label{prop:MC}
	For every $\epsilon>0$ and $\mu_0>0$, \;
 $\ddd\lim_{\delta \rightarrow 0} \limsup_{N\rightarrow \infty} \Pp\big\{\omega(\wt H^N,\mu_0,\delta)>\epsilon\big\}=0$.
\end{proposition}
\begin{proof}

Pick $m\ge\log(2\epsilon^{-1})$ and recall the metric  $d_m$ in \eqref{dn} for restrictions to $[-m,m]$. Define the restricted modulus 
\be\label{wm} \begin{aligned}
		\omega^m(\wt H^N,\mu_0,\delta)=\inf\big\{\max_{1\leq i \leq n} \theta^m_{\wt H^N}[t_{i-1},t_i):\exists n\geq 1, \; &-\mu_0=t_0<t_1< \dotsc<t_n=\mu_0 \\
		&\text{such that $t_i-t_{i-1}>\delta$ for all $i\leq n$}\big\} 
\end{aligned}\ee
where 
\begin{align*}
    \theta^m_{\wt H^N}[a,b)&=\sup_{\mu,\nu\in[a,b)}d_m\big(\wt H^N_\mu, \wt H^N_\nu\big)
= \sup_{\mu,\nu\in[a,b)} \sup_{|x|\le m} \bigl\lvert\wt H^N_\mu(x)- \wt H^N_\nu(x)\bigr\rvert  \\
&\le \bigl(\wt H^N_{b-}(m)- \wt H^N_a(m) \bigr) 
- \bigl( \wt H^N_{b-}(-m)- \wt H^N_a(-m)\bigr).
\end{align*}
The last inequality used monotonicity of $\wt H^N_\mu(x)- \wt H^N_\nu(x)$ in $\mu$, $\nu$ and $x$.  Since $\mu\mapsto\wt H^{N,m}_\mu$ is a jump function, $\theta^m_{\wt H^N}[a,b)$ vanishes precisely when  there is no jump in the open interval $(a,b)$. 

Starting with \eqref{maprox} write 
\be\label{H1009} \begin{aligned}
    	&\Pp\big(\omega(\wt H^N,\mu_0,\delta)>\epsilon\big)
 \leq \Pp\big(\omega^m(\wt H^N,\mu_0,\delta)>{\epsilon}/2\big) \le  
 \Pp\bigl\{\ClsJmp_N^{m,\mu_0}< 2\delta\bigr\} \\[4pt]
 &\quad   + 
     \Pp\bigl\{\sup_{\mu_1,\mu_2\in[\mu_0,\mu_0+2\delta]}d(\wt H^{N,m}_{\mu_1},\wt H^{N,m}_{\mu_2})>\epsilon/2\bigr\}
 + 
     \Pp\bigl\{\sup_{\mu_1,\mu_2\in[\mu_0-2\delta,\mu_0]}d(\wt H^{N,m}_{\mu_1},\wt H^{N,m}_{\mu_2})>\epsilon/2\bigr\}. 
\end{aligned}\ee
The second inequality is justified by the following  observations.  Suppose that 
\[  \sup_{\mu_1,\mu_2\in[\mu_0,\mu_0+2\delta]}d(\wt H^{N,m}_{\mu_1},\wt H^{N,m}_{\mu_2})\le\epsilon/2 
\quad\text{ and }\quad 
\sup_{\mu_1,\mu_2\in[\mu_0-2\delta,\mu_0]}d(\wt H^{N,m}_{\mu_1},\wt H^{N,m}_{\mu_2})\le\epsilon/2, 
\]
and if there is more than one jump in  $(-\mu_0, \mu_0)$, the jumps are separated by at least $2\delta$ from each other. 
Then let the interior partition points $t_1,\dotsc, t_{n-1}$ in \eqref{wm} be exactly the jump locations in  $(-\mu_0, \mu_0)$. (The event $\omega^m(\wt H^N,\mu_0,\delta)>0$ forces at least one jump to occur so $n\ge2$.)  This is an acceptable partition if $t_1>-\mu_0+\delta$ and  $t_{n-1}<\mu_0-\delta$. If the latter condition fails, redefine $t_{n-1}=\frac12(t_{n-2}+\mu_0)\wedge(\mu_0-\frac32\delta)$ to have $\mu_0-t_{n-1}\in(\delta, 2\delta)$ and $t_{n-1}-t_{n-2}>\delta$. Redefine $t_1$ analogously if it is too close to $-\mu_0$. This is all feasible if $\delta$ is small enough relative to $\mu_0$. 

Now $\theta^m_{\wt H^N}[t_{i-1},t_i)=0$ for $1<i<n$, $\theta^m_{\wt H^N}[t_0,t_1)\le \epsilon/2$, and $\theta^m_{\wt H^N}[t_{n-1},t_n)\le \epsilon/2$. Together these imply 
$\omega^m(\wt H^N,\mu_0,\delta)\le{\epsilon}/2$.

 The claim of the proposition 
 follows from \eqref{H1009}  because  by the jump estimate in Lemma \ref{lm:cljp} and the stochastic continuity in Proposition \ref{prop:SC},  $\Pp(\omega(\wt H^N,\mu_0,\delta)>\epsilon)\leq C(v,\mu_0,\epsilon)\delta$ for large enough $N$. 
\end{proof}

\subsection{Stochastic continuity}
Fix $\bar{\lambda}=(\lambda_1,\lambda_2)$, and let  $\bq{v}_2\sim\mu^{\bar{\lambda}}$. For integers $i\leq j$ we define the event
\begin{equation*}
	\OneRare^{[i,j]}(\bar{\lambda})=\big\{\Cls^{[i,j]}_1(\bq{v}_2)<\Cls^{[i,j]}_2(\bq{v}_2)\big\}.
\end{equation*}
$\OneRare^{[i,j]}(\bar{\lambda})$ is the event where $\bq{v}_2$ has 
 at least one second class customer in the time interval $[i,j]$. The following bound is analogous to Lemma \ref{lem:TR}.
\begin{lemma}\label{lem:ORb}
	Fix $\bar{\lambda}=(\lambda_1,\lambda_2)$ and integers $i\leq j$,  and set $\Delta=\frac{\lambda_2}{(1-\lambda_1)(\lambda_1+\lambda_2)}$. For any $R\geq 2$ and $\rho\in(0,1)$.
	\begin{equation}\label{tr2}\begin{aligned} 
		&\Pp\big(\OneRare^{[i,j]}(\bar{\lambda})\big)\leq 2\sum_{s=1}^\infty \big(1-(1-\Delta )^{sR}\big)\\
		&\qquad\times 
		\exp\Bigg\{-\frac{\Big(\Big[\tfrac {(s-1)R}{2}-|\lambda_1+\lambda_2-\rho|\cdot (j-i+1)-|\lambda_1-\rho|\cdot(j-i+1)\Big]_+\Big)^2}{9(j-i+1)}\Bigg\}.
\end{aligned} 	\end{equation}
\end{lemma}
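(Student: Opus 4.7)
The plan is to follow the proof of Lemma \ref{lem:TR} in the simpler two-class setting, where a single queue and a single difference-walk replace the tower of three queues and three dominating walks, and where the intricate independence argument for $(\Qu_1,\Qu_2,\Qu_3)$ collapses to a triviality. First I will apply Lemma \ref{lem:Cl} with $n=2$ to obtain the distributional identity
\[
\bigl(\Cls_1^{[i,j]}(\bq{v}_2),\, \Cls_2^{[i,j]}(\bq{v}_2)\bigr) \sim \bigl(D(\bq{x}_1,\bq{x}_2)[i,j],\, \bq{x}_2[i,j]\bigr),
\]
where $\bq{x}_1, \bq{x}_2$ are independent Bernoulli sequences of densities $\lambda_1$ and $\lambda_1+\lambda_2$ respectively. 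The event $\OneRare^{[i,j]}(\bar\lambda)$ is therefore equidistributed with $\{D(\bq{x}_1,\bq{x}_2)[i,j] < \bq{x}_2[i,j]\}$, namely the event that some service in $[i,j]$ goes unused.

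Next, repeating the one-line observation leading to \eqref{eq13}, an unused service at some $l \in [i,j]$ forces the queue to have emptied before time $l$. This yields
\[
\bigl\{D(\bq{x}_1,\bq{x}_2)[i,j] < \bq{x}_2[i,j]\bigr\} \;\subseteq\; \bigl\{Q_{i-1}(\bq{x}_1,\bq{x}_2) < S^*(j)\bigr\}, \qquad S^*(j) := \max_{i \le l \le j}\bigl(\bq{x}_2[i,l]-\bq{x}_1[i,l]\bigr).
\]
Since $S^*(j)$ is measurable with respect to $\bq{x}_1,\bq{x}_2$ restricted to times $\ge i$ while $Q_{i-1}$ is measurable with respect to their restrictions to times $<i$, the two are independent by the i.i.d. structure of $\bar{\bq{x}}$ (no queue-independence argument of the kind needed for Lemma \ref{lem:TR} is required). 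By the Burke property (Lemma \ref{lm:Burke}), $Q_{i-1}$ has a geometric distribution with success parameter $\Delta = \lambda_2/((1-\lambda_1)(\lambda_1+\lambda_2))$.

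Conditioning on $S^*(j)$, discretizing its range at scale $R$, and summing the geometric tail in exact parallel with the passage to \eqref{eq10} gives
\[
\Pp\bigl(\OneRare^{[i,j]}(\bar\lambda)\bigr) \;\le\; \sum_{s=1}^\infty \bigl(1-(1-\Delta)^{sR}\bigr)\,\Pp\bigl(S^*(j) > (s-1)R\bigr).
\]
The final step is to tail-bound $S^*(j)$. The common drift cancels in $\bq{x}_2[i,l] - \bq{x}_1[i,l] = \bq{x}_2^\rho[i,l] - \bq{x}_2^\rho[i,l]$, so I bound $S^*(j) \le \max_l |\bq{x}_1^\rho[i,l]| + \max_l |\bq{x}_2^\rho[i,l]|$ and apply the same Etemadi--Hoeffding combination as in \eqref{eq29.5} to each walk separately. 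The two resulting Gaussian tails are then consolidated via the triangle inequality and the elementary convexity observation that $|\sum_{m=1}^k \lambda_m - \rho|$ for $k\in\{1,2\}$ is at most $|\lambda_1-\rho| + |\lambda_1+\lambda_2-\rho|$ (the values at the endpoints). This yields a single exponential of the form stated in \eqref{tr2}.

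The main obstacle is purely bookkeeping of constants: matching the advertised prefactor $2$ in front of the sum and the specific threshold $(s-1)R/2$ inside the exponent. These come out of the precise splitting of $S^*(j)$ into the two walks and the scalings chosen in the Etemadi--Hoeffding step; since the two-class case has only half as many walks as Lemma \ref{lem:TR}, the analogous constants shrink accordingly. Conceptually no new ingredient beyond a direct one-class simplification of Lemma \ref{lem:TR}'s machinery is needed.
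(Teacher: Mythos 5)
Your proposal is correct and takes essentially the same route as the paper: apply Lemma \ref{lem:Cl} with $n=2$, include $\OneRare^{[i,j]}$ in $\{\Qu_{i-1} < S^*(j)\}$, use the trivially available independence of $\Qu_{i-1}$ (geometric with parameter $\Delta$ by Burke) and $S^*(j)$, discretize $S^*(j)$ at scale $R$, and tail-bound $S^*(j)$ by splitting into the two centered walks with Etemadi--Hoeffding. One minor note: you write $\bq{x}_2[i,l]-\bq{x}_1[i,l]=\bq{x}_2^\rho[i,l]-\bq{x}_2^\rho[i,l]$, which should read $\bq{x}_2^\rho[i,l]-\bq{x}_1^\rho[i,l]$, and the ``convexity'' consolidation of the two drifts is simply the trivial bound $|\lambda_1-\rho| \vee |\lambda_1+\lambda_2-\rho| \le |\lambda_1-\rho| + |\lambda_1+\lambda_2-\rho|$ here, but neither affects the argument.
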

\begin{proof}
	The proof is very similar to the one of Lemma \ref{lem:TR}, so we only point out how to adapt it here. Let $\bar{\bq{x}}=(\bq{x}_1,\bq{x}_2) \sim \nu^{\bar{\lambda}}$ and  $\bq{v}_2\sim \Vmap_2(\bar{\bq{x}})$ so that $\big(\Cls^{[i,j]}_1(\bq{v}_2),\Cls^{[i,j]}_2(\bq{v}_2)\big)\sim \big( D(\bq{x}_1,\bq{x}_2)[i,j]
	,{x}_2[i,j]\big)$. Similar to \eqref{eq13}, we have
	\begin{equation*}
		\OneRare^{[i,j]}(\bar{\lambda})\subseteq  \{\Qu<S^*(j)\}
	\end{equation*}
	where $S^*$ is the running maximum of the random walk
	$ 
		S(m):={x}_2[i,m]-{x}_1[i,m ]
	$
	and $\Qu\sim \text{Geom}\big(\frac{\lambda_2}{(1-\lambda_1)(\lambda_1+\lambda_2)}\big)$ is   independent of $S^*$. Similar to \eqref{eq10}, for $R\geq 2$ we have
	\begin{equation}\label{eq41}
		\Pp\big(\OneRare^{[i,j]}(\bar{\lambda})\big)\leq  \sum_{s=1}^\infty\Pp\big(\Qu\leq sR\big)\Pp\big(S^*(j)> (s-1)R\big).
	\end{equation}
	Using the bound
	\begin{equation*}
		\Pp\big(S^*(j)>t\big)\leq \sum_{k=1}^2\Pp\big(\sup_{l\in[i,j]}|{x}^\rho_k[i,l]|>t/2\big),
	\end{equation*}
	and bounds similar to \eqref{eq29.5}--\eqref{eq30} in \eqref{eq41} we obtain \eqref{tr2}.
\end{proof}
\begin{proposition}\label{prop:SC}
	For every $\mu\in\R$ and $0<\epsilon<1$, there exists  constants $C(v,\mu, \epsilon)$ and $N_0(v,\mu, \epsilon)$ such that for any $0<\delta<1$ and  $N\ge N_0$,   
	\begin{equation*}
		\Pp\Big(\sup_{\mu_1,\mu_2\in(\mu-\delta,\mu+\delta]}d(\wt H^N_{\mu_1},\wt H^N_{\mu_2})>\epsilon\Big)\le C\delta.
	\end{equation*}
\end{proposition}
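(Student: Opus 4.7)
The plan is a short monotonicity-plus-Markov argument; Lemma~\ref{lem:ORb} is not required.

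\textbf{Step 1 (monotonicity in $\mu$).} From~\eqref{H138}, \eqref{Href}, and formula~\eqref{eqn:TASEPh} for $\mathcal P$, a direct computation gives, for $\mu_1 < \mu_2$ and $x = i/(2N^{2/3})$ with $i \in \Z$,
\[
\wt H^N_{\mu_2}(x) - \wt H^N_{\mu_1}(x) = \sigma(i)\,\frac{2}{N^{1/3}}\,\#\bigl\{j \in J_i : U_j \in (\mu_1 N^{-1/3}, \mu_2 N^{-1/3}]\bigr\},
\]
with $\sigma(i) = +1$, $J_i = \lzb -i, -1\rzb$ for $i > 0$; $\sigma(i) = -1$, $J_i = \lzb 0, -i - 1\rzb$ for $i < 0$; and $J_0 = \emptyset$. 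Between consecutive scaled integer points $\wt H^N_\mu(\cdot)$ is linearly interpolated and $\sigma$ is locally constant, so on each sub-interval the difference is a convex combination of same-sign values. Thus $\mu \mapsto \wt H^N_\mu(x)$ is monotone for every $x \in \R$ (its direction depending only on the sign of $x$). Hence, for any $\mu_1, \mu_2 \in (\mu-\delta, \mu+\delta]$ and any $x$,
\[
|\wt H^N_{\mu_1}(x) - \wt H^N_{\mu_2}(x)| \le |\wt H^N_{\mu-\delta}(x) - \wt H^N_{\mu+\delta}(x)|.
\]

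\textbf{Step 2 (cut-off and reduction to a count).} Fix $m := \lceil \log_2(4/\epsilon) \rceil$, so $2^{-m} \le \epsilon/4$. Combining Step 1 with the universal bound~\eqref{maprox} gives
\[
\Pp\Bigl(\sup_{\mu_1,\mu_2 \in (\mu-\delta,\mu+\delta]} d(\wt H^N_{\mu_1}, \wt H^N_{\mu_2}) > \epsilon\Bigr) \le \Pp\bigl(d_m(\wt H^N_{\mu-\delta}, \wt H^N_{\mu+\delta}) > \tfrac{\epsilon}{2}\bigr).
\]
The formula in Step 1 shows that $x \mapsto |\wt H^N_{\mu+\delta}(x) - \wt H^N_{\mu-\delta}(x)|$ is non-decreasing in $|x|$ on each half-line, so its supremum over $x \in [-m, m]$ is at most $2N^{-1/3} K$, where
\[
K := \#\bigl\{j \in \lzb -\lceil 2mN^{2/3}\rceil, \lceil 2mN^{2/3}\rceil \rzb : U_j \in ((\mu-\delta)N^{-1/3}, (\mu+\delta)N^{-1/3}]\bigr\}.
\]

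\textbf{Step 3 (first-moment bound).} Each $U_j$ is uniform on $[-1, 1]$ (a consequence of Lemma~\ref{lem:FU} applied to a one-class projection $F(u) = 1\cdot\ind_{u \le a} + \infty\cdot\ind_{u > a}$). Hence, for $N \ge N_0(\mu) := (|\mu| + 1)^3$ the target interval is contained in $[-1,1]$ and $\Pp(U_j \in ((\mu-\delta)N^{-1/3}, (\mu+\delta)N^{-1/3}]) = \delta N^{-1/3}$. Therefore $\E[K] \le (4mN^{2/3} + 3)\, \delta N^{-1/3} \le 5m\delta N^{1/3}$ for $N$ large. Markov's inequality yields
\[
\Pp\bigl(2N^{-1/3} K > \tfrac{\epsilon}{2}\bigr) = \Pp\bigl(K > \tfrac{\epsilon}{4}\,N^{1/3}\bigr) \le \frac{20 m \delta}{\epsilon} =: C(\mu, \epsilon)\,\delta,
\]
which completes the proof. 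The only nontrivial observation is in Step 1: the monotonicity of $\wt H^N_\mu$ in $\mu$ collapses the supremum over the open interval $(\mu-\delta, \mu+\delta]$ into a single two-point difference, whose pointwise magnitude is an indicator count amenable to an elementary first-moment estimate; there is no serious technical obstacle beyond carefully tracking the signs and index ranges.
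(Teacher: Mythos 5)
Your proof is correct, and it takes a genuinely different and more elementary route than the paper's. The paper proves Proposition~\ref{prop:SC} by the same template as the modulus-of-continuity bound (Lemma~\ref{lem:TR}): after reducing, via~\eqref{maprox} and monotonicity, to the event that \emph{at least one} speed $U_j$ falls in the $2\delta N^{-1/3}$-wide window over the $O(N^{2/3})$ relevant sites, it invokes Lemma~\ref{lem:ORb}, whose proof runs through the Ferrari--Martin queueing representation, the Burke property, geometric queue lengths, and Hoeffding tails. That crude ``at least one jump'' event is not amenable to a first-moment argument, since $\E[\text{count}]\sim\delta N^{1/3}$ diverges in $N$. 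Your key observation is to \emph{retain} the magnitude in the reduction: $d_m(\wt H^N_{\mu-\delta},\wt H^N_{\mu+\delta})>\epsilon/2$ forces the count $K$ to be at least $\tfrac{\epsilon}{4}N^{1/3}$, that is, order-$N^{1/3}$ many jump sites, which matches the scale of $\E[K]$. Then linearity of expectation plus the marginal uniformity of $U_j$ on $[-1,1]$ (a consequence of Lemma~\ref{lem:FU}, no independence among the $U_j$ needed) yields $\E[K]\le C m\delta N^{1/3}$, and Markov closes the argument, bypassing all of the queueing machinery. What the paper's route buys is uniformity with the rest of Section~\ref{sec:tightness}: the $\OneRare$ estimate is the two-class sibling of the $\TwoRare$ estimate (Lemma~\ref{lem:TR}) which genuinely \emph{is} needed for the two-jump control in Proposition~\ref{prop:MC}, where a first-moment bound cannot deliver the required rate and one must exploit the independence of the geometric queue lengths that the Burke property provides. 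For stochastic continuity alone, your Markov argument is leaner and equally effective; minor bookkeeping remarks: your $N_0$ implicitly also depends on $\epsilon$ through $m$ (to absorb the $+3$ in $\E[K]$), which is permitted by the statement, and the monotonicity and interpolation checks in Step~1 are sound.
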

\begin{proof}
	Let $n=\lceil\log(\tfrac{\epsilon}{2})\rceil$. Then
	\begin{equation}\label{T439} 
		\begin{aligned}
			&\Pp\Big(\sup_{\mu_1,\mu_2\in(\mu-\delta,\mu+\delta]}d\big(\wt H^N_{\mu_1},\wt H^N_{\mu_2}\big)>\epsilon\Big)\leq \Pp\Big(\sup_{\mu_1,\mu_2\in(\mu-\delta,\mu+\delta]}d_{n-1}\big(\wt H^N_{\mu_1},\wt H^N_{\mu_2}\big)>\tfrac{\epsilon}{2}\Big)\\
		& \leq \Pp\Big(\big[\wt H^N_{\mu+\delta}(n-1)-\wt H^N_{\mu+\delta}(-n+1)\big]-\big[\wt H^N_{\mu-\delta}(n-1)-\wt H^N_{\mu-\delta}(-n+1)\big]>0\Big)\\
		&\le \Pp\Big\{ \,\exists i\in\bigl[-\tfrac{2n}{1-v^2}N,\tfrac{2n}{1-v^2}N\bigr]:  U_i\in\bigl(v+\tfrac{1-v^2}{N^{1/2}}(\mu-\delta), v+\tfrac{1-v^2}{N^{1/2}}(\mu+\delta)\bigr] \, \Big\}. 
		\end{aligned}
	\end{equation}
The second inequality used  monotonicity.  To turn this into a probability of a two-class queuing configuration,  discretize  the classes as follows: 
\begin{align*}
U_i\in \bigl(-1\,, v+(1-v^2)(\mu-\delta)N^{-1/2}\bigr] &\ \longrightarrow\  \text{ class }  1\\
    U_i\in \bigl(v+(1-v^2)(\mu-\delta)N^{-1/2}\,, v+(1-v^2)(\mu+\delta)N^{-1/2}\bigr] &\ \longrightarrow \ \text{ class }  2\\
U_i\in \bigl(  v+(1-v^2)(\mu+\delta)N^{-1/2}\,, 1\bigr) &\ \longrightarrow\  \text{ class }   \infty.    
\end{align*}
The probabilities of the classes are recorded in the parameter vector  \[ \bar{\lambda}=(\lambda_1, \lambda_2)=\big(\tfrac{1+v}2+\tfrac12(1-v^2)(\mu-\delta)N^{-1/2}\,,(1-v^2)\delta N^{-1/2}\big).\]    
The last event in \eqref{T439} is the existence of a second class customer in time interval $\bigl[-\tfrac{2n}{1-v^2}N,\tfrac{2n}{1-v^2}N\bigr]$. Thus we have 
\begin{equation*}
	\begin{aligned}
		&\Pp\Big(\sup_{\mu_1,\mu_2\in(\mu-\delta,\mu+\delta]}d\big(\wt H^N_{\mu_1},\wt H^N_{\mu_2}\big)>\epsilon\Big)\leq \Pp\Big\{\OneRare^{\bigl[-\tfrac{2n}{1-v^2}N,\tfrac{2n}{1-v^2}N\bigr]}(\bar{\lambda})\Big\}\\
		&\qquad \qquad 
		\leq C\delta \sum_{s=1}^\infty s \exp\Bigg(-\frac{\big[\tfrac {(s-1)}{2}-C(4n(1-v^2)^{-1}+1) \big]_+^2}{9(2n(1-v^2)^{-1}+1)}\Bigg)\leq C(v,\mu,\epsilon) \delta.
	\end{aligned}
\end{equation*}
The  penultimate inequality applied \eqref{tr2} with 
$[i,j]=\bigl[-\tfrac{2n}{1-v^2}N,\tfrac{2n}{1-v^2}N\bigr]$, 
$\rho=\tfrac{1+v}2$, 
$\Delta=\frac{\lambda_2}{(1-\lambda_1)(\lambda_1+\lambda_2)}$ $\leq$ $C(v,\mu)\delta N^{-1/2}$,  and $R=N^{1/2}$.
\end{proof}

\begin{proof}[Proof of Theorem \ref{thm:conv}]
	To show that $H^N$ converges to some element $H\in D(\R,C(\R))$, it is enough to show that the three items of Lemma \ref{lem:conv} hold for $H^N$. Item \eqref{ss} follows from Proposition \ref{prop:SC}. Item \eqref{cfd} follows from Proposition \ref{prop:fdd}. Item \eqref{fdc} follows from Proposition \ref{prop:MC}. From Proposition~\ref{prop:fdd}, the limiting object $H$ has the same finite-dimensional  distributions as the SH, which implies that $H=G$. 
\end{proof}


\section{Coupled multiclass measures  for general exclusion processes}

\label{sec:Asp}
Presently a speed process has been associated to three particle systems:   TASEP  \cite{Amir_Angel_Valko11},    ASEP  \cite{aggarwal2022asep}, and for the totally asymmetric zero range process (TAZRP) a result in this spirit was obtained in \cite{amir2021tazrp}.  The speed process   records the asymptotic speeds of  particles of ordered classes and it couples all the translation-invariant multitype stationary measures. 
To set the stage for extensions of our main results beyond nearest-neighbor exclusion processes, in this section we construct a coupling of multiclass  stationary distributions for a  general translation-invariant one-dimensional exclusion process and then prove Theorem \ref{exun}.  At the end of the section we connect this object to a speed process, \textit{assuming} that the latter exists and is stationary. 


Fix a probability kernel $p:\Z\times \Z\rightarrow [0,1]$
that satisfies the assumptions stated above Theorem \ref{exun}, namely, translation invariance  $p(x,y)=p(0,y-x)$ and that 
for each pair   $x,y\in\Z$ there exists $m\in\Z_+$
such that $p^{(m)}(x,y)+p^{(m)}(y,x)>0$. 

The generator $L^{ep}$ of the exclusion process on the particle configuration space $\{0,1\}^\Z$ is 
\begin{equation}\label{L780} 
	L^{ep}f(\eta)=\sum_{x,y\in\Z}p(x,y)\eta(x)(1-\eta(y))[f(\eta^{x,y})-f(\eta)]
\end{equation}
where
\begin{equation}\nonumber
	\eta^{x,y}(z)=
	\begin{cases}
		\eta(z) & z\notin\{x,y\}\\
		\eta(y) & z=x\\
		\eta(x) & z=y.
	\end{cases}
\end{equation}
We construct this process by attaching to each directed edge $(x,y)$ a Poisson clock of rate $p(x,y)$ whose rings trigger jump attempts. A jump from $x$ to $y$  is completed if there is a particle at $x$ and none at $y$. 
This dynamics generalizes naturally to a multiclass version: the contents of sites  $x$ and  $y$  are exchanged  iff the particle at $x$ has a lower label (higher priority)  than the particle at $y$.

We let $L^{cep}$  denote  the generator of the corresponding  continuum exclusion process  with state space $[0,1]^\Z$ and the same kernel $p$:  \begin{equation}\label{L800}
	L^{cep}f(\xi)=\sum_{x,y\in\Z}p(x,y)[f(\wt{\xi}^{\,x,y})-f(\xi)], \qquad \xi\in[0,1]^\Z, 
\end{equation}
where
\begin{equation}
	\wt{\xi}^{\,x,y}(z)=
	\begin{cases}
		\xi(z) & z\notin\{x,y\}\\
		\max\{\xi(x),\xi(y)\} & z=x\\
		\min\{\xi(x),\xi(y)\} & z=y.
	\end{cases}
\end{equation}
This process follows the same Poisson clocks on the directed edges. 
  When the clock of edge $(x,y)$ rings, 
the values $\xi(x)$ and $\xi(y)$ are exchanged if $\xi(x)<\xi(y)$, otherwise kept unchanged.

The existence and  uniqueness properties of translation-invariant  stationary distributions of  multicomponent 
exclusion processes continue to hold under this more general transition kernel $p$, by the same proofs based on  Section VIII.3 of Liggett \cite{liggett1985interacting}.  
Given 
 an increasing $k$-vector $\bar{\rho}=(\rho_1,\ldots,\rho_k)\in [0,1]^k$ of densities, 
 there exists a unique measure $\mu^{\bar{\rho}}$ on $(\{0,1\}^\Z)^k$ with Bernoulli marginals  $\nu^{\rho_1},\ldots,\nu^{\rho_k}$ such that $\mu^{\bar{\rho}}$ is translation-invariant and stationary under the joint evolution of $k$  exclusion processes with generator $L^{ep}$, coupled through common Poisson clocks (basic coupling). Moreover, if $(\eta_1,\ldots,\eta_k)\sim \mu^{\bar{\rho}}$ then 
\begin{equation}\nonumber
	\mu^{\bar{\rho}}\big(\eta_1\leq \eta_2\leq \cdots\leq \eta_k\big)=1.
\end{equation}

Define the vector $\bar{\rho}^N=(\rho^N_1,\dotsc,\rho^N_{2^N})\in[0,1]^{2^N}$ by 
\begin{equation}\nonumber
	\rho^N_i=i2^{-N}  \ \text{ for } \  i\in\{1,\dotsc,2^N\}.
\end{equation}
Let $\bar{\eta}^N\in(\{0,1\}^\Z)^{2^N}$ denote  a $2^N$-component random particle configuration with the $\bar{\rho}^N$-stationary distribution: 
\begin{equation}\label{g2}
	\bar{\eta}^N=(\eta^N_1,\ldots,\eta^N_{2^N})\sim \mu^{\bar{\rho}^N}.
\end{equation}
Note that density one implies that  $\eta^N_{2^N}(x)=1$ $\forall x\in\Z$. 
Map $\bar{\eta}^N$ bijectively into a multitype configuration  $W^N=\{W^N(x)\}_{x\in\Z}$ with values in $\{i2^{-N}: i\in\lzb1,2^N\rzb\}$ by  
\begin{equation}\label{g1}
	W^N(x):=\min\{j2^{-N}:  j\in\lzb1,2^N\rzb, \, {\eta}_j^N(x)=1\}.
\end{equation}
In words, $W^N(x)$ is the smallest density $\rho^N_{j_0}$ in the vector $\bar{\rho}^N$ such that the profile $\eta^N_{j_0}\sim\nu^{\rho^N_{j_0}}$ has a particle at site $x$. 
Marginally $W^N(x)$ is uniform on the set $\{j2^{-N}:  j\in\lzb1,2^N\rzb\}$.  

The next result states that for each $N$, $W^N$ is stationary under the exclusion dynamics of $L^{cep}$.   Denote by $\mathcal{W}^N$ the map $\bar{\eta}^N\mapsto W^N$ defined  in \eqref{g1}. 

\begin{lemma} \label{lm:W1} 
Let $\bar{\eta}^N_t\sim \mu^{\bar{\rho}^N}$ be a stationary process  of $N$ components  evolving in basic coupling. Then 
	$W^N_t=\mathcal{W}^N(\bar{\eta}^N_t)$ is a stationary process evolving under the dynamics specified by  generator  $L^{cep}$ in \eqref{L800}.  The distribution of $W^N$ is the unique stationary one in the following sense: if  $V$ is translation-ergodic on the sequence space  $\{j2^{-N}:  j\in\lzb1,2^N\rzb\}^\Z$ with uniform marginals  and stationary under the generator  $L^{cep}$, then $V\sim W^N$. 
\end{lemma}
\begin{proof}  The stationarity follows because the map $\mathcal{W}^N$ commutes with the pathwise evolution under the Poisson clocks. This is readily verified through a picture, see  Figure \ref{Fig:Com}.  The point  is that when a jump from $x$ to $y$ is attempted,  $W^N(x)$ and $W^N(y)$ are exchanged iff $W^N(x)<W^N(y)$, while particles in the configuration $\bar{\eta}^N$ move from $x$ to $y$ iff there are more particles at $x$ than at $y$. As a consequence, the relation $W^N=\mathcal{W}^N(\bar{\eta}^N)$ is preserved by each jump.  

The inverse of the map in \eqref{g1}, that is,  $\eta_j(x)=\ind\{j2^{-N}\ge V(x)\}$,  turns the distribution of  $V$ into a multicomponent stationary distribution $\wt\mu$ on the space $(\{0,1\}^\Z)^{2^N}$.  Its marginals are translation-ergodic stationary distributions for the exclusion process \eqref{L780}, hence i.i.d.\ Bernoulli distributions by Theorem 3.9(a) of \cite{liggett1985interacting}.  By the uniqueness of multicomponent stationary measures discussed above, $\wt\mu$ must equal $\mu^{\bar\rho^N}$ and hence $V\sim W^N$. 
\end{proof}

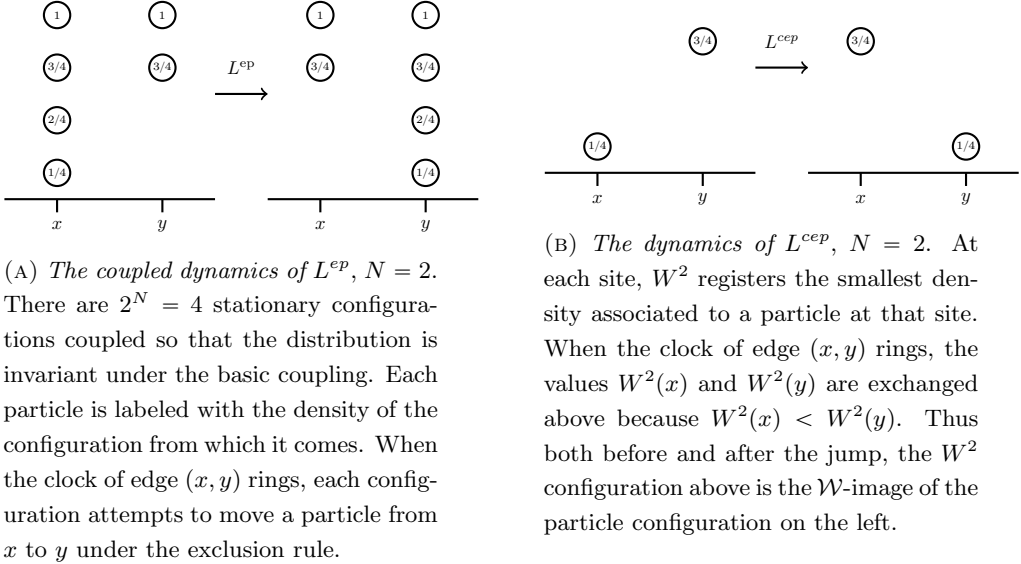
\begin{figure}
			\centering
			 \begin{subfigure}{0.35\textwidth}
			\begin{tikzpicture}[thick,scale=0.7, every node/.style={scale=0.7}]
				\draw (-2,0) -- (2,0);
				\draw (-1,0) -- (-1,-0.25);
				\node[]  at (-1,-.5){$x$};
				\node[]  at (1,-.5){$y$};
				\draw (1,0) -- (1,-0.25);
				\node[circle,draw,inner sep=0mm,minimum size=0.5cm] (c) at (-1,.5){\tiny$1/4$};
				\node[circle,draw,inner sep=0mm,minimum size=0.5cm,minimum size=0.5cm] (c) at (-1,1.5){\tiny$2/4$};
				\node[circle,draw,inner sep=0mm,minimum size=0.5cm] (c) at (-1,2.5){\tiny$3/4$};
				\node[circle,draw,inner sep=0mm,minimum size=0.5cm] (c) at (-1,3.5){\tiny$1$};

				\node[circle,draw,inner sep=0mm,minimum size=0.5cm] (c) at (1,2.5){\tiny$3/4$};
				\node[circle,draw,inner sep=0mm,minimum size=0.5cm] (c) at (1,3.5){\tiny$1$};
				\draw[->] (2,2) -- (3,2);
				\node[] (c) at (2.5,2.5){$L^{\text{ep}}$};
				\begin{scope}[shift={(5,0)}]
						\draw (-2,0) -- (2,0);
					\draw (-1,0) -- (-1,-0.25);
					\node[]  at (-1,-.5){$x$};
					\node[]  at (1,-.5){$y$};
					\draw (1,0) -- (1,-0.25);
					\node[circle,draw,inner sep=0mm,minimum size=0.5cm] (c) at (1,.5){\tiny$1/4$};
					\node[circle,draw,inner sep=0mm,minimum size=0.5cm] (c) at (1,1.5){\tiny$2/4$};
					\node[circle,draw,inner sep=0mm,minimum size=0.5cm] (c) at (-1,2.5){\tiny$3/4$};
					\node[circle,draw,inner sep=0mm,minimum size=0.5cm] (c) at (-1,3.5){\tiny$1$};
					
					\node[circle,draw,inner sep=0mm,minimum size=0.5cm] (c) at (1,2.5){\tiny$3/4$};
					\node[circle,draw,inner sep=0mm,minimum size=0.5cm] (c) at (1,3.5){\tiny$1$};
					\end{scope}
					\end{tikzpicture}
				\caption{\textit{The coupled dynamics of $L^{ep}$}, $N=2$. There are $2^N=4$ stationary configurations coupled so that the distribution is invariant under the basic coupling. Each particle is labeled with the density of the configuration from which it comes. When the clock of edge $(x,y)$ rings, each configuration attempts to move  a particle  from $x$ to $y$ under the exclusion rule.}
				 \end{subfigure}\hspace{4em}%
			 \begin{subfigure}{0.35\textwidth}
			 		\begin{tikzpicture}[thick,scale=0.7, every node/.style={scale=0.7}]
				\begin{scope}[shift={(0,-6)}]
					\draw (-2,0) -- (2,0);
					\draw (-1,0) -- (-1,-0.25);
					\node[]  at (-1,-.5){$x$};
					\node[]  at (1,-.5){$y$};
					\draw (1,0) -- (1,-0.25);
					\node[circle,draw,inner sep=0mm,minimum size=0.5cm] (c) at (-1,.5){\tiny$1/4$};
					\node[circle,draw,inner sep=0mm,minimum size=0.5cm] (c) at (1,2.5){\tiny$3/4$};
				\draw[->] (2,2) -- (3,2);
				\node[] (c) at (2.5,2.5){$L^{cep}$};
					\begin{scope}[shift={(5,0)}]
						\draw (-2,0) -- (2,0);
						\draw (-1,0) -- (-1,-0.25);
						\node[]  at (-1,-.5){$x$};
						\node[]  at (1,-.5){$y$};
						\draw (1,0) -- (1,-0.25);
						\node[circle,draw,inner sep=0mm,minimum size=0.5cm] (c) at (1,.5){\tiny$1/4$};
						
						\node[circle,draw,inner sep=0mm,minimum size=0.5cm] (c) at (-1,2.5){\tiny$3/4$};

					\end{scope}
				\end{scope}
			\end{tikzpicture}
		\caption{\textit{The dynamics of $L^{cep}$}, $N=2$.  At each site, $W^2$ registers the smallest density associated to a particle at that site. When the clock of edge $(x,y)$ rings,  the values $W^2(x)$ and $W^2(y)$ are exchanged above because $W^2(x)<W^2(y)$. Thus both before and after the jump, the $W^2$ configuration above is the $\mathcal W$-image of the particle configuration on the left. }
		 \end{subfigure}
			\caption{\small The effect of a Poisson clock ring on edge $(x,y)$, to illustrate the commutation of $\mathcal{W}^N$ with the evolution of $\bar\eta^N$ (on the left in diagram (A)) and $W^N$ (on the right in diagram (B)).  
   }.
   \label{Fig:Com}
		\end{figure}

\begin{lemma}\label{lm:W2}
	There exists a random configuration  $W\in[0,1]^{\Z}$ such that $W^N\Rightarrow W$ as  $N\to\infty$. $W$ is translation-invariant and has uniform marginals. 
\end{lemma}
\begin{proof}
 Let $\bar\eta^{N+m}\sim\mu^{\bar\rho^{N+m}}$ as in \eqref{g2}. Since projection commutes with the evolution and preserves Bernoulli marginals, by the uniqueness discussed above   we can define a version of $\bar\eta^{N}\sim\mu^{\bar\rho^{N}}$ by the projection 
\be\label{W56} \nonumber \bar{\eta}^N=\big({\eta}^{N+m}_{1\cdot 2^m},{\eta}^{N+m}_{2\cdot 2^{m}},{\eta}^{N+m}_{3\cdot 2^{m}},\ldots,{\eta}^{N+m}_{2^{N}\cdot 2^{m}}\big). 
\ee
Thus we have  a coupling $\mu_{N,N+m}$ of $\bar{\eta}^N$ and $\bar{\eta}^{N+m}$ such that 
\begin{equation}\label{eq1'}
\mu_{N,N+m}\Big\{
\big({\eta}^{N+m}_{1\cdot 2^m},{\eta}^{N+m}_{2\cdot 2^{m}},{\eta}^{N+m}_{3\cdot 2^{m}},\ldots,{\eta}^{N+m}_{2^{N}\cdot 2^{m}}\big)
= \bar{\eta}^N\Big\}=1.
\end{equation}
As   $i\mapsto \bar{\eta}^{N+m}_i(x)$ is nondecreasing and $\{0,1\}$-valued,  it follows from \eqref{eq1'} that 
\begin{equation}\nonumber
	\mu_{N,N+m}\Big( \sup_{x\in\Z}|W^N(x)-W^{N+m}(x)|\leq 2^{-N}\Big)=1.
\end{equation}

Thus $d_{\text{Prok}}(W^{N},W^{N+m})\leq 2^{-N}$ where $d_{\text{Prok}}$ is the Prokhorov metric on the space  of probability measures  on $[0,1]^\Z$ and we equip the space $[0,1]^\Z$ with the product metric $d(\zeta,\eta)=\sum_{x\in\Z} 2^{-|x|-2} |\zeta(x)-\eta(x)|$. 
Thus, $\{W^N\}_{N\in\N}$ is Cauchy under $d_{\text{Prok}}$ and by completeness there is a random variable $W$ such that $d_{\text{Prok}}(W^{N},W)\to0$.   $W$ inherits translation-invariance and uniform marginals from the  $W^N$s. 
\end{proof}


	\begin{proof}[Proof of Theorem \ref{exun}]
Translation-invariance and uniform marginals of $W$ are in Lemma \ref{lm:W2}. 
  If $f$ is a continuous local function on $[0,1]^\Z$, then 
  $L^{cep}f$ is a bounded continuous function.  Hence by the invariance of Lemma \ref{lm:W1},  
  $\E[L^{cep}f(W)]=\lim_{N\to\infty} \E[L^{cep}f(W^N)]=0$, and the invariance of the distribution of $W$ has been proved.

   

It remains to establish the uniqueness of $W$. For $N\in \N$, define the function
\begin{equation}
	F^N(v)=\sum_{i=1}^{2^N} i2^{-N}\cdot\ind_{\big((i-1)2^{-N},\, i2^{-N}\big]}(v), \qquad v\in [0,1].
\end{equation}
Define $W^N$ and $W^{N+m}$  as images \eqref{g1}  of $\bar\eta^{N}$ and $\bar\eta^{N+m}$ in the coupling \eqref{eq1'}.  Then $F^N(W^{N+m})=W^{N}$. Since the coordinates of $W$ are uniform, $W$ avoids the discontinuity set of $F^N$ almost surely. By sending $m\to\infty$ we get the distributional equality
$F^N(W)\sim W^N$.

Suppose $V$ is translation-ergodic and distributed according to a   stationary measure for  $L^{cep}$ with marginals uniform on $[0,1]$. We must show that $V\sim W$. Define $V^N=F^N(V)$ and note that $V^N$ is translation-ergodic and  $V^N\rightarrow V$ a.s. (and therefore in distribution) as $N\rightarrow \infty$.   It is therefore enough to show that $V^N\sim W^N$ for every $N\in \N$.
This follows from the uniqueness part of Lemma \ref{lm:W1}. 
	\end{proof}
 
The distribution of $W$ in Theorem \ref{exun} was constructed as a limit of its discretizations without using multiclass particles. We remark here that some of the ideas used in the  proof of Theorem \ref{exun} have appeared in the literature before. Specifically, a discretization and a limiting scheme similar to the one used in the proof of Theorem \ref{exun} was used in \cite{mart-20} for the ASEP on the torus.  Next we relate  $W$   to the speed process $U^{\text{spd}}$.
Since a speed process has not been constructed in the generality of this section, we proceed by assuming its existence and stationarity under the dynamics.  Then we show that  $U^{\text{spd}}\sim\phi(W)$ for a determistic map $\phi$. 
Starting from the profile $\eta(i)=i$ for $ i\in \Z$, apply the multiclass exclusion dynamics with kernel $p(\abullet,\abullet)$ and stipulate that particle $i$ has priority over all particles $j>i$. 
  Let $X_i(t)$ be the position of particle $i$ at time $t$. 
\begin{assumption}\label{assu1}
	With probability one and for some $M>0$, the following limit exists
	\begin{equation}
		U^{\text{spd}}_i:=\lim_{t\rightarrow \infty}\frac{X_i(t)}{t}\in[-M,M]. 
	\end{equation}

\end{assumption}  
\begin{assumption}\label{assu2}
	The distribution of the speed process $U^{\text{spd}}:=\{U^{\text{spd}}_i\}_{i\in\Z}$ is stationary under the multitype exclusion dynamics.
\end{assumption}
Assumptions \ref{assu1}--\ref{assu2}  are natural and they hold for the ASEP 
\cite{Amir_Angel_Valko11,aggarwal2022asep}.
 
\begin{definition}
	Let $F_{\text{spd}}$ denote the CDF of $U^{\text{spd}}_0$.  The process $U^W\in [-M,M]^\Z$ of the exclusion process with dynamics $L^{ep}$
 is defined by 
	\begin{equation}
		U^W:= F_{\text{spd}}^{-1}(W):=\{F_{\text{spd}}^{-1}(W_i)\}_{i\in\Z}
	\end{equation}
where
 $F_{\text{spd}}^{-1}$ is the generalized inverse function.
\end{definition}
\begin{corollary}
	$U^W$ is translation-invariant and stationary under $L^{cep}$.  
\end{corollary}


The stationarity follows because the pathwise dynamics commutes with any coordinatewise applied nondecreasing function. 
 Our final result connects  $W$ with $U^{\text{spd}}$.
\begin{proposition}\label{NL}
	Suppose Assumptions \ref{assu1}--\ref{assu2} hold. Then $U^{\text{spd}}\sim U^W$.
\end{proposition}
\begin{proof}
     Assumption \ref{assu1} implies that $U^{\text{spd}}$ is translation-ergodic   (the idea is in  \cite[Proposition 5.1]{Amir_Angel_Valko11}).  $V:=F_{\text{spd}}\big(U^{\text{spd}}\big)$ is translation-ergodic, stationary under  $L^{cep}$, and has uniform marginals on $[0,1]$. By Theorem \ref{exun} $V\sim W$. This implies the result. 
\end{proof}

\appendix

\section{Random walk}
We first state a random walk lemma that comes from p.~519--520 in \cite{resnick}.
See also Chapter VIII, Section 6 in~\cite{Asmussen-1987}. In~\cite{resnick} and~\cite{Asmussen-1987} the result is stated for $\mu_k < 0$ and $\sigma_N \to 1$ and supremum is taken over positive time. Our formulation    follows by Brownian scaling and by replacing $x$ with $-x$.



 
\begin{lemma} \label{lem:genres}
Let $\mu_N$ be a sequence of strictly positive numbers with $\mu_N \to 0$. Let $\sigma_N$ be a sequence satisfying $\sigma_N \to \sigma > 0$. Let $\varphi(N)$ be a sequence satisfying $\mu_N/\varphi(N) \to m > 0$. For each $N$, let $\{X_{N,i}:i \in \Z\}$ be a collection of i.i.d. random variables with mean $\mu_N$ and variance $\sigma_N^2$. Further, suppose that the sequence $\{X_{N,0}^2:N \ge 1\}$ is uniformly integrable. Let $S^N(m)$ be defined as
\be \label{SNm}
S^N(m) = \begin{cases}
 -\sum_{i = m}^{-1} X_{N,i} &m \le 0 \\
  \sum_{i = 0}^{m - 1} X_{N,i} &m \ge 0
\end{cases}
\ee
with   $S^N(0) = 0$. Let $B$ be a Brownian motion with diffusion coefficient $1$ and zero drift. Then, the following convergence in distribution holds:
\be \label{convBMgen}
\sup_{-\infty < x \le 0} \varphi(N) S^N(\ce{x}) \underset{N \to \infty}{\Longrightarrow} \sup_{-\infty < x \le 0} \{\sigma B(x) + m x\}
\ee
\end{lemma}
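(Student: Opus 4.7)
The plan is to reduce to the classical heavy-traffic supremum limit (Resnick pp.~519--520; Asmussen Ch.~VIII.6), which is stated for random walks with small negative drift, variance tending to $1$, and supremum taken over non-negative indices. The statement here differs in three cosmetic ways: the drift is positive, the supremum is over $m\le 0$, and the limiting variance is $\sigma^2$ rather than $1$. These will be handled by a time reversal, a sign flip, and a Brownian rescaling, exactly as the remark preceding the lemma suggests.

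First I would reverse time by setting $Y_{N,j}:=-X_{N,-j-1}$ for $j\geq 0$ and $T^N(n):=\sum_{j=0}^{n-1}Y_{N,j}$. A short calculation using~\eqref{SNm} with the substitution $j=-i-1$ gives $S^N(-k)=T^N(k)$ for all $k\geq 0$, so
\[
\sup_{-\infty<x\le 0}\varphi(N)\,S^N(\ce{x}) \;=\; \sup_{n\geq 0}\varphi(N)\,T^N(n).
\]
The reversed sequence $\{Y_{N,j}\}$ is i.i.d.\ with mean $-\mu_N\to 0^-$ and variance $\sigma_N^2\to\sigma^2$, inherits the uniform integrability of $\{X_{N,0}^2\}$, and satisfies $(-\mu_N)/\varphi(N)\to -m$. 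Applying the classical result to $Z_{N,j}:=Y_{N,j}/\sigma$ (so that the variance converges to $1$ and the scaling ratio becomes $(\mu_N/\sigma)/\varphi(N)\to m/\sigma$) and then multiplying through by $\sigma$ yields
\[
\varphi(N)\sup_{n\geq 0} T^N(n) \;\Longrightarrow\; \sup_{x\geq 0}\{\sigma B(x)-m x\}.
\]

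Finally I would reflect: since $(B(-y))_{y\geq 0}$ is itself a standard Brownian motion, the substitution $x\mapsto -x$ gives the distributional identity
\[
\sup_{x\geq 0}\{\sigma B(x)-m x\} \;\stackrel{d}{=}\; \sup_{-\infty<x\le 0}\{\sigma B(x)+m x\},
\]
delivering the claim. I do not anticipate any real obstacle beyond bookkeeping; the passage from the textbook $\sigma^2=1$ formulation to our $\sigma^2\neq 1$ setting is absorbed by the rescaling $Z_{N,j}=Y_{N,j}/\sigma$, with Slutsky's theorem and $\sigma_N\to\sigma$ handling the fact that $Z_{N,j}$ has variance $\sigma_N^2/\sigma^2\to 1$ rather than exactly $1$.
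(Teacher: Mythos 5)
Your proof is correct and takes essentially the same route as the paper: the paper does not give a detailed argument, instead citing Resnick (pp.~519--520) and Asmussen (Ch.~VIII.6) and remarking that the statement ``follows by Brownian scaling and by replacing $x$ with $-x$.'' Your time reversal $Y_{N,j}=-X_{N,-j-1}$, rescaling $Z_{N,j}=Y_{N,j}/\sigma$, and the final reflection $\sup_{x\ge 0}\{\sigma B(x)-mx\}\overset{d}{=}\sup_{x\le 0}\{\sigma B(x)+mx\}$ simply spell out the bookkeeping the paper leaves implicit.
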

\begin{remark}
It is immediate that on the left-hand side of~\eqref{convBMgen}, one can replace $x$ with $\lceil \xi(N) x\rceil$ for any strictly positive sequence $\xi(N)$.  
\end{remark}

Let $[x]$ denote the integer closest to $x$ with $|[x]| \le |x|$. 

\begin{lemma}\label{lem:convprob}
Consider the setting of Lemma~\ref{lem:genres}. Let $\xi(N)$ be a sequence satisfying $\varphi(N)^2 \xi(N) \to R > 0$. Then, for each $S < T \in \R$,
\be \label{rwconv}
\begin{aligned}
&\,\lim_{N \to \infty} \Pp\Bigl[\,\sup_{-\infty < x \le [S \xi(N)]} \varphi(N) S^N(x) > \sup_{[S \xi(N)] \le x \le [T \xi(N)]} \varphi(N) S^N(x)\Bigr] \\
&\qquad\qquad 
= \Pp\bigl[\sup_{-\infty < x \le S}\{\sigma B(R x) + mRx\} > \sup_{S \le x \le T}\{\sigma B(Rx) + mRx\}\bigr].
\end{aligned}
\ee

\end{lemma}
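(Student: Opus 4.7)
My plan is to establish the joint weak convergence of the truncated supremum pair
\[
(M_N^1, M_N^2) := \Bigl(\sup_{x \le [S\xi(N)]} \varphi(N) S^N(x),\, \sup_{[S\xi(N)] \le x \le [T\xi(N)]} \varphi(N) S^N(x)\Bigr) \Longrightarrow (M^1, M^2),
\]
with $M^1 := \sup_{x \le S}\{\sigma B(Rx) + mRx\}$ and $M^2 := \sup_{S \le x \le T}\{\sigma B(Rx) + mRx\}$, and then to conclude \eqref{rwconv} via the Portmanteau theorem applied to the continuity set $\{M^1 > M^2\}$.

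For the joint convergence on compact sets, I will work with the rescaled, linearly interpolated process $Y_N(x) := \varphi(N) S^N([x\xi(N)])$. Under the assumed scalings $\mu_N/\varphi(N) \to m$, $\sigma_N \to \sigma$, $\varphi(N)^2\xi(N) \to R$, together with the uniform integrability of $\{X_{N,0}^2\}$, Donsker's functional CLT gives, for every $L > 0$ with $-L < S$, the weak convergence $Y_N \Rightarrow Y$ on $C[-L,T]$ where $Y(x) = \sigma B(Rx) + mRx$; the drift and diffusion parameters match because $\E Y_N(x) \approx \mu_N\varphi(N)\xi(N)x \to mRx$ and $\Var Y_N(x) \approx \sigma_N^2\varphi(N)^2\xi(N)x \to \sigma^2 R x$. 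Continuous mapping applied to the (continuous) sup functional then yields the joint convergence of the truncated sups on $[-L,S] \times [S,T]$ to $(\sup_{-L \le x \le S} Y,\; \sup_{S \le x \le T} Y)$.

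The main obstacle is the tail truncation, namely verifying that for every $K > 0$,
\[
\lim_{L\to\infty}\,\limsup_{N\to\infty}\,\Pp\Bigl(\sup_{x \le [-L\xi(N)]} \varphi(N) S^N(x) > -K\Bigr) = 0.
\]
Exploiting the i.i.d.\ structure of $\{X_{N,i}\}$ and the definition of $S^N$, I decompose
\[
\sup_{x \le [-L\xi(N)]} \varphi(N) S^N(x) = \varphi(N) S^N([-L\xi(N)]) + \sup_{y \le 0} \varphi(N)\widetilde S^N(y),
\]
where $\widetilde S^N(y) := S^N(y + [-L\xi(N)]) - S^N([-L\xi(N)])$ is independent of the first summand and equidistributed with $S^N$ on the nonpositive integers. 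The second summand is tight by Lemma~\ref{lem:genres}. For the first, a direct moment computation gives mean $\to -LmR$ and variance $\to L\sigma^2 R$; since the mean grows linearly in $L$ while the standard deviation grows only like $\sqrt L$, Chebyshev's inequality forces this term to $-\infty$ in probability as $L \to \infty$, uniformly in large $N$. Combined, the tail sup tends to $-\infty$ in probability as $L \to \infty$, uniformly in $N$.

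A standard three-$\varepsilon$ argument then promotes the truncated joint convergence to $(M_N^1, M_N^2) \Rightarrow (M^1, M^2)$. To finish, I will check that $\{M^1 > M^2\}$ is a continuity set: conditional on $Y|_{(-\infty, S]}$, the increment process $x \mapsto Y(x) - Y(S)$ on $[S,T]$ is an independent drifted Brownian motion starting at zero, so $M^2 - Y(S)$ has a continuous distribution independent of $M^1 - Y(S)$, which gives $\Pp(M^1 = M^2) = 0$. Portmanteau then yields~\eqref{rwconv}.
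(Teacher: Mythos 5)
Your proposal is correct, but it takes a noticeably different route to the joint convergence than the paper does. The paper recenters both suprema at $S^N([S\xi(N)])$ (writing $S^N([S\xi(N)],x) = S^N(x) - S^N([S\xi(N)])$) and then observes that, already at finite $N$, the two recentered suprema are \emph{independent}: one depends only on increments to the left of $[S\xi(N)]$ and the other only on increments to the right. The marginal convergences then follow from Donsker's theorem (for the compact piece) and from shift invariance together with Lemma~\ref{lem:genres} (for the half-infinite piece), and joint convergence is immediate by independence. You instead do not recenter and do not use this exact independence; you establish joint convergence by truncating the half-infinite supremum at $[-L\xi(N)]$, applying Donsker on compacts, and controlling the discarded tail via the decomposition $\sup_{x\le [-L\xi(N)]}\varphi(N)S^N(x)=\varphi(N)S^N([-L\xi(N)])+\sup_{y\le 0}\varphi(N)\widetilde S^N(y)$, showing the first piece escapes to $-\infty$ (mean $\sim -LmR$, standard deviation $\sim\sqrt{L}\,\sigma\sqrt R$) while the second is tight by Lemma~\ref{lem:genres}; a three-$\varepsilon$ argument then closes the gap. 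Both arguments are sound, and your continuity-set verification (independence of $M^1-Y(S)$ and $M^2-Y(S)$ by the Markov property of Brownian motion, with $M^2-Y(S)$ continuous) matches the spirit of the paper's remark that the limiting pair consists of independent random variables with continuous distribution. The paper's recentering trick gets you independence for free and is cleaner; your truncation argument is heavier but more robust (it would survive even if exact independence at finite $N$ were unavailable), and it has the merit that you actually prove the key tightness estimate rather than leaning entirely on structure.
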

\begin{proof}
For a function $f:\R \to \R$ let $f(x,y) = f(y) - f(x)$. Observe that 
\begin{align*}
&\Pp\Bigl[\,\sup_{-\infty < x \le [S \xi(N)]} \varphi(N) S^N(x) > \sup_{[S \xi(N)] \le x \le [T \xi(N)]} \varphi(N) S^N(x)\Bigr] \\
&= \Pp\Bigl[\,\sup_{-\infty < x \le [S \xi(N)]} \varphi(N) S^N([S\xi(N)],x) > \sup_{[S \xi(N)] \le x \le [T \xi(N)]} \varphi(N) S^N([S\xi(N)],x)\Bigr].
\end{align*}
Now, note that $\sup_{-\infty < x \le [S \xi(N)]} \varphi(N) S^N([S\xi(N)],x)$ and $\sup_{[S \xi(N)] \le x \le [T \xi(N)]} \varphi(N) S^N([S\xi(N)],x)$ are independent. By convergence of random walk to Brownian motion with drift (with respect to  the topology of uniform convergence on compact sets) , we get that 
\[
\sup_{[S \xi(N)] \le x \le [T \xi(N)]} \varphi(N) S^N(RS,[S\xi(N)],x) \Longrightarrow \sup_{S \le x \le T} \{\sigma B(RS, Rx) + R(m-S)x\}.
\]
By shift invariance of random walk and Lemma~\ref{lem:genres},
\begin{align*}
&\,\sup_{-\infty < x \le [S \xi(N)]} \varphi(N) S^N([S\xi(N)],x) \deq \sup_{-\infty < x \le 0} \varphi(N) S^N(x)  \\
&\Longrightarrow \sup_{-\infty < x \le 0}\{\sigma B(x) + mx\} = \sup_{-\infty < x \le 0} \{\sigma B(Rx) + mRx\} \\
&\deq \sup_{-\infty < x \le S} \{\sigma B(RS,Rx) + m(R-S)x\}. 
\end{align*}
By independence, we have shown the following joint convergence:
\be \label{jc}
\begin{aligned}
&\, \Bigl(\sup_{-\infty < x \le [S \xi(N)]} \varphi(N) S^N([S \xi(N)],x), \sup_{[S \xi(N)] \le x \le [T \xi(N)]} \varphi(N) S^N([S \xi(N)],x)\Bigr)  \\
&\Longrightarrow \Bigl(\sup_{-\infty < x \le S}\{\sigma B(RS,R x) + m(R-S)x\}, \sup_{S \le x \le T}\{\sigma B(RS,Rx) + m(R-S)x\}\Bigr).
\end{aligned}
\ee

The right-hand side of~\eqref{jc} consists of two independent random variables with continuous distribution. Therefore, 
\begin{align*}
&\quad \lim_{N \to \infty} \Pp\Bigl[\,\sup_{-\infty < x \le [S \xi(N)]} \varphi(N) S^N(x) > \sup_{[S \xi(N)] \le x \le [T \xi(N)]} \varphi(N) S^N(x)\Bigr] \\
&= \lim_{N \to \infty}\Pp\Bigl[\,\sup_{-\infty < x \le [S \xi(N)]} \varphi(N) S^N([S \xi(N)],x) > \sup_{[S \xi(N)] \le x \le [T \xi(N)]} \varphi(N) S^N([S \xi(N)] ,x)\Bigr] \\
& =  \Pp\bigl[\sup_{-\infty < x \le S}\{\sigma B(RS,R x) + m(R-S)x\} > \sup_{S \le x \le T}\{\sigma B(Rx) + m(R-S)x\}\bigr]
\\ &
= \Pp\bigl[\sup_{-\infty < x \le S}\{\sigma B(R x) + mRx\} > \sup_{S \le x \le T}\{\sigma B(Rx) + mRx\}\bigr],
\end{align*}
with the second equality holding because the event on the right-hand side is a continuity set for the joint vector on the right in~\eqref{jc}. 
\end{proof}

\section{Discrete-time M/M/1 queues}
\label{app:MM1}

Notational comment: the input and output sequences in our queuing setting are elements $\bq{x}=\{x(i)\}_{i\in\Z}$  of the space $\Qs_1=\{1,\infty\}^\Z$, where the value $x(i)=\infty$ signifies that site (time point) $i$ is empty.  For the purpose of counting particles  it is convenient to replace $\infty$ with zero. We use  bracket  notation $x[i]$ to denote the  corresponding  $\{0,1\}$-valued configuration and to count the number of particles in the interval $[i,j]$ as follows: 
\be\label{notat8}   x[i]= \ind_{x(i)=1} =\begin{cases} 0, &x(i)=\infty \\1, &x(i)=1 \end{cases} 
\qquad\text{and}\qquad 
x[i,j]=\sum_{k=i}^j x[k].   \ee
Obviously then also $x[i,i]=x[i]$. 
Define the usual coordinatewise partial order $\preceq$ on $\Qs_1$ by
\begin{equation*}
	\bq{x}_1\preceq \bq{x}_2 \quad \iff \quad \bigl[\, \forall i\in\Z:  \  x_1(i)=1 \implies x_2(i)=1\,\bigr] \quad \iff \quad  \bigl[\, \forall i\in\Z:  \  x_1[i]\le x_2[i]\,\bigr].
\end{equation*}
Introduce also notation for truncating sequences by setting them empty to the left of time $n$: 
\begin{equation}\label{notat67} 
	x_{n,0}(i)=
	\begin{cases}
		x(i), & i\geq n\\
		\infty, & i\leq n-1
	\end{cases}
	\qquad\iff\qquad x_{n,0}[i]=x[i]\cdot\ind_{i\ge n}. 
\end{equation}


	\begin{lemma}[Burke property] \label{lm:Burke}
		Let $0<\alpha<\alpha+\beta<1$ and  $(\arrv,\srvv)\sim \nu^{(\alpha, \beta)}$. Let  $\depv=D(\arrv,\srvv)$ and $\bq{r}=R(\arrv,\srvv)$. Then  for any $i_0\in\Z$, the random variables 
		\begin{equation}\label{eq11}
			\{d(j)\}_{j\leq{i_0}}, \,\,\{r(j)\}_{j\leq{i_0}} \,\, \text{ and } \,\, Q_{i_0}(\arrv,\srvv)
		\end{equation} 
		are mutually independent with marginal distributions $d(j)\sim{\rm Ber}(\alpha)$, $r(j)\sim {\rm Ber}(\alpha+\beta)$, and $Q_{i_0}\sim{\rm Geom}(\gamma)$ with $\gamma=\frac{\beta}{(1-\alpha)(\alpha+\beta)}$.   
		Furthermore,   
		\be\label{disId}(\depv,\bq{r})\sim(\arrv,\srvv)\sim \nu^{(\alpha, \beta)}.\ee 
	\end{lemma}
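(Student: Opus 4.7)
My plan is to prove a classical ``output theorem'' for the discrete-time M/M/1 queue, based on a one-step measure-preservation identity that I will then iterate. Two preparatory observations simplify everything. First, separating the $j=i$ term in the supremum in \eqref{Q} and using that the sup over $j\le i-1$ equals $Q_{i-1}$ gives the recursion $Q_i=(Q_{i-1}+a[i]-s[i])^+$. Hence $\{Q_i\}$ is a birth--death Markov chain on $\Z_+$ with birth probability $p=\alpha(1-\alpha-\beta)$ and (from states $k\ge 1$) death probability $q=(1-\alpha)(\alpha+\beta)$; since $\alpha<\alpha+\beta$ we have $p<q$, and the detailed-balance equation $\pi_{k+1}q=\pi_k p$ identifies the unique stationary distribution as $\pi_k=\gamma(1-\gamma)^k$ with $\gamma=(q-p)/q=\beta/((1-\alpha)(\alpha+\beta))$, giving the stated marginal $Q_{i_0}\sim\mathrm{Geom}(\gamma)$ in the stationary regime. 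Second, formulas \eqref{eq6}--\eqref{defR} define a deterministic map $T:(Q_{i-1},a(i),s(i))\mapsto(Q_i,d(i),r(i))$ on $\Z_+\times\{0,1\}^2$, and a short case inspection shows that $T$ is a bijection.

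The core computation is the one-step identity: if $(Q_{i-1},a(i),s(i))\sim\pi\otimes\mathrm{Ber}(\alpha)\otimes\mathrm{Ber}(\alpha+\beta)$, then $(Q_i,d(i),r(i))$ also has this product distribution. Since $T$ is bijective on the discrete space, this reduces to checking equality of pointwise masses across the eight pairs $(Q_{i-1}\in\{0,\ge 1\})\times(a(i),s(i))\in\{0,1\}^2$. The outputs $(d,r)=(0,0)$ and $(1,1)$ leave the queue length unchanged and come from inputs $(a,s)=(0,0)$ and $(1,1)$, so equality of masses is immediate. For $(d,r)=(1,0)$ at state $Q_i=k\ge 0$, the unique preimage is $(Q_{i-1},a,s)=(k+1,0,1)$ with input mass $\pi_{k+1}q$, which matches the target mass $\pi_k p=\pi_k\alpha(1-\alpha-\beta)$ by detailed balance; the case $(d,r)=(0,1)$, split into the unused-service subcase $(Q_{i-1},a,s)=(0,0,1)\mapsto(Q_i,d,r)=(0,0,1)$ and the subcase $(k-1,1,0)\mapsto(k,0,1)$ for $k\ge 1$, is handled the same way.

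To obtain full independence, I proceed by forward induction and a limit. Fix $n_0\le i_0$ and prove by induction on $i\in\{n_0,\dotsc,i_0\}$ that $\Xi_i:=(Q_i,\{d(j),r(j)\}_{n_0\le j\le i})$ has the product law $\pi\otimes(\mathrm{Ber}(\alpha)\otimes\mathrm{Ber}(\alpha+\beta))^{i-n_0+1}$ and is measurable with respect to $\{(a(k),s(k))\}_{k\le i}$. In the inductive step, the hypothesis yields $Q_{i-1}\perp\{d(j),r(j)\}_{n_0\le j\le i-1}$; the fresh innovation $(a(i),s(i))$ is independent of the past noise and hence of $\Xi_{i-1}$, so $\{d(j),r(j)\}_{n_0\le j\le i-1}$ is independent of $(Q_{i-1},a(i),s(i))$; applying the deterministic map $T$ preserves this independence, and the one-step identity endows $(Q_i,d(i),r(i))$ with the claimed product law. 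Letting $n_0\to-\infty$ upgrades independence over every finite window to mutual independence of the infinite collections $\{d(j)\}_{j\le i_0}$, $\{r(j)\}_{j\le i_0}$, and $Q_{i_0}$ with the stated marginals; the joint-law identity \eqref{disId} then follows because all finite-dimensional marginals of $(\depv,\bq{r})$ match those of $\nu^{(\alpha,\beta)}$. The only genuine work will be the case analysis in the one-step identity; the rest is routine Markov-chain and independence bookkeeping.
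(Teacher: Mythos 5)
Your proposal is correct and follows essentially the same route as the paper's sketch: identify $(Q_j,d(j),r(j))$ as a Markov chain, verify one-step preservation of the product law $\mathrm{Geom}(\gamma)\otimes\mathrm{Ber}(\alpha)\otimes\mathrm{Ber}(\alpha+\beta)$, propagate independence by induction over a finite window, and then take $n_0\to-\infty$ (and $i_0\to\infty$ for \eqref{disId}). You simply fill in the details the paper leaves as an ``observe''---the Lindley recursion, the bijectivity of $T$, and the detailed-balance bookkeeping---which is exactly what is needed.
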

\begin{proof}   Here is a sketch  of a simple proof. The structure of the queuing mappings together with the independent Bernoulli product  inputs $(\arrv, \srvv)$  imply that $\{(Q_j, d(j), r(j))\}_{j\in\Z}$  is a stationary, irreducible,  recurrent   Markov chain. Observe that the joint product distribution ${\rm Geom}(\gamma)\otimes{\rm Ber}(\alpha)\otimes{\rm Ber}(\alpha+\beta)$ is preserved by  the mapping $(Q_{j-1}, a(j), s(j)) \mapsto (Q_j, d(j), r(j))$, and it is the stationary distribution of the Markov chain  $\{(Q_j, d(j), r(j))\}_{j\in\Z}$.  For any fixed $j_0$, the joint independence of 
$\{d(j)\}_{j\in\lzb j_0,i_0\rzb}$, $\{r(j)\}_{j\in\lzb j_0,i_0\rzb}$  and $ Q_{i_0}(\arrv,\srvv)$ can now be checked by induction on $i_0$. The base case $i_0=j_0$ comes from the stationarity of   the Markov chain.  Letting $j_0\searrow-\infty$ gives the full distributional claim for \eqref{eq11}.

The fixed-point property \eqref{disId} 
comes by letting $i_0\nearrow\infty$. A different proof is given in Theorem 4.1 of \cite{koni-ocon-roch-02}.
\end{proof}

We go through two auxiliary lemmas on the way to Proposition \ref{pr:q259}. 
 
\begin{lemma} \label{lm:qq99}  {\rm\cite[Lemmas 8.1 and 8.2]{martin2010fixed}} Consider two queues in tandem with arrivals $\arrv$, service sequences $\srvv_1$ and $\srvv_2$, and  departures  $\depv=D(\arrv, \srvv_1, \srvv_2)$.  

{\rm(i)}   Recall the notation  \eqref{notat67}  of  the truncated arrival sequence $\arrv_{n,0}$. 
 For $n\in\Z$  denote the departures by $\depv^{(n)}=D(\arrv_{n,0}, \srvv_1, \srvv_2)$.   Then for each $i\in\Z$ there exists $n_0(i)\in\Z$ such that  $d^{(n)}[i]=d[i]$ for all $n\le n_0(i)$. 

\smallskip 

{\rm(ii)}  Let $k\in\Z$.  Suppose $a[i]=0$ for all $i\le k-1$.  Then for all $t\ge k+1$, 
\be\label{qq95}  \sum_{i=k}^{t-1} d[i] = \min_{\ell,v: \, k\le \ell\le v\le t} \Bigl\{ \; \sum_{i=k}^{\ell-1} a[i] + \sum_{i=\ell}^{v-1} s_1[i] +   \sum_{j=v}^{t-1} s_2[j]  \Bigr\}. 
\ee

\end{lemma}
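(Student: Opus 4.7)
The plan is to handle the two claims separately, since (i) is a stabilization statement while (ii) is a min-plus path identity.

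For (i), my plan is to reduce the dependence of $d[i]$ on arrivals to a finite window of past times via the standing finiteness of queue lengths. For the first queue alone, $Q_i(\arrv,\srvv_1)<\infty$ implies that the supremum defining it is attained at a finite index $j_1^*(i)\in\Z$. A brief computation shows that truncating arrivals at any $n\le j_1^*(i)$ leaves $Q_i$ unchanged: for $j<n$ the truncated contribution equals $a[n,i]-s_1[j,i]$, which is dominated by $a[n,i]-s_1[n,i]$, the $j=n$ term, already present in both suprema. Hence $D(\arrv_{n,0},\srvv_1)[\ell]=D(\arrv,\srvv_1)[\ell]$ for every $\ell$ provided $n\le j_1^*(\ell-1)$. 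In the tandem situation, let $j_2^*(i)$ be a finite maximizer for the supremum defining $Q_{i-1}(D(\arrv,\srvv_1),\srvv_2)$; then $d[i]$ is determined by the first-queue output only on the finite window $[j_2^*(i),i]$. The choice
$$
n_0(i)=j_2^*(i)\wedge\min_{\ell\in[j_2^*(i),i]}j_1^*(\ell-1),
$$
a minimum of finitely many finite integers, gives $d^{(n)}[i]=d[i]$ for all $n\le n_0(i)$.

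For (ii), my plan is to establish the single-queue analogue first and then compose. Under $a[i]=0$ for $i\le k-1$, the first queue is empty at time $k-1$, so $Q_{k-1}^{(1)}=0$ and~\eqref{gen:Qa} gives $d^{(1)}[k,t-1]=a[k,t-1]-Q_{t-1}^{(1)}$. Indices $j\le k-1$ drop out of the supremum defining $Q_{t-1}^{(1)}$: for such $j$ one has $a[j,t-1]=a[k,t-1]$ while $s_1[j,t-1]\ge s_1[k,t-1]$, so the $j=k$ term already dominates. Expanding $a[j,t-1]=a[k,t-1]-a[k,j-1]$ for $k\le j\le t$ and rearranging yields the single-queue identity
$$
d^{(1)}[k,t-1]=\min_{\ell\,:\,k\le\ell\le t}\{a[k,\ell-1]+s_1[\ell,t-1]\}.
$$
The same hypothesis forces $d^{(1)}[i]=0$ for $i\le k-1$, so the identical argument applied to the second queue (arrivals $d^{(1)}$, services $\srvv_2$) gives
$$
d[k,t-1]=\min_{v\,:\,k\le v\le t}\{d^{(1)}[k,v-1]+s_2[v,t-1]\}.
$$
Substituting the first expression into the second and combining the two minima into a single minimum over pairs $(\ell,v)$ with $k\le\ell\le v\le t$ produces exactly~\eqref{qq95}.

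The main delicate point is in (i): one must justify that each maximizer above is a finite integer so that $n_0(i)$ is itself finite. This is precisely what the standing finiteness-of-queues convention guarantees, since under the usual stability condition (service rate strictly exceeding arrival rate) the argument of each supremum drifts to $-\infty$ as $j\to-\infty$, forcing the sup to be attained.
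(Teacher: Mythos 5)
The paper states this lemma with only a citation to Martin [2010] and offers no proof of its own, so your argument must stand alone---and it does. For part (ii), you correctly reduce to the single-queue min-plus identity via $d^{(1)}[k,t-1]=a[k,t-1]-Q^{(1)}_{t-1}$ (valid since $Q^{(1)}_{k-1}=0$ when arrivals are silent before $k$), correctly observe that indices $j\le k-1$ drop out of the supremum defining $Q^{(1)}_{t-1}$ because $a[j,t-1]=a[k,t-1]$ while $s_1[j,t-1]\ge s_1[k,t-1]$, and correctly compose the two queues: the nested minima over $k\le\ell\le v$ and $k\le v\le t$ produce exactly the range $k\le\ell\le v\le t$ in~\eqref{qq95}, including the degenerate boundary terms handled by the empty-interval convention. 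For part (i), your reduction to a finite window is sound: the paper's standing finiteness-of-queues assumption, together with the fact that the quantities $a[j,i]-s[j,i]$ inside each supremum are integers bounded above by $Q_i<\infty$, guarantees the supremum is attained at a finite index. Tracking the dependency of $d[i]$ on the first-queue output through $Q_{i-1}(D(\arrv,\srvv_1),\srvv_2)$ then yields the finite window $[j_2^*(i),i]$, and for each $\ell$ in that window the first-queue output stabilizes once $n\le j_1^*(\ell-1)$. One detail worth making explicit: besides $Q_{\ell-1}(\arrv_{n,0},\srvv_1)=Q_{\ell-1}(\arrv,\srvv_1)$ you also need the arrival at $\ell$ itself to agree, $a_{n,0}(\ell)=a(\ell)$, i.e.\ $n\le\ell$; this is automatic because $j_1^*(\ell-1)\le\ell$, so your choice of $n_0(i)$ handles it. The proof is correct.
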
 	

The proof of the next lemma relies on the  ideas from p.~16--17 of 	\cite{martin2010fixed}. 
			
\begin{lemma}   Let $\arrv$ and  $\srvv$ be arrival and  service sequences,   
$\depv=D(\arrv,\srvv)$ and $\bq{r}=R(\arrv,\srvv)$.
Then for all $s<t$ in $\Z$, 
\be\label{qq100}\begin{aligned}  
\min_{\ell\in\lzb s,t\rzb}\biggl\{  \; \sum_{i=s}^{\ell-1} a[i] + \sum_{j=\ell}^{t-1} s[j]\biggr\}
=
\min_{\ell\in\lzb s,t\rzb}\biggl\{  \; \sum_{i=s}^{\ell-1} r[i] + \sum_{r=\ell}^{t-1}  d[j]\biggr\}
\end{aligned} \ee

\begin{proof}  In the first step, we prove 
\be\label{qq108}\begin{aligned}  
\min_{\ell\in\lzb s,t\rzb}\biggl\{  \; \sum_{i=s}^{\ell-1} a[i] + \sum_{j=\ell}^{t-1} s[j]\biggr\}
=
\min_{\ell\in\lzb s,t\rzb}\biggl\{  \; \sum_{i=s}^{\ell-1} a[i] + \sum_{r=\ell}^{t-1}  d[j]\biggr\}
\end{aligned} \ee
If there are no unused services in $\lzb s, t-1\rzb$, then $\srvv=\depv$ throughout the interval and \eqref{qq108} holds.  In general, since $\srvv\ge \depv$,  we have $\ge$ in \eqref{qq108}. 

It remains to consider the case where  there are unused services.  Let $n$ be the time of the last unused service in $\lzb s, t-1\rzb$.  Then for each $k\in\lzb s,n\rzb$, 
\[   \sum_{i=k}^{n} a[i] \le  \sum_{j=k}^{n} d[j] ,  \quad\text{equivalently, }\quad   0 \ge  \sum_{i=k}^{n} a[i]  -  \sum_{j=k}^{n} d[j]   \]
because otherwise one of the arrivals would still be in the system after time $n$ and  there could not have been an unused service at time $n$.  Then for  $\ell\le n$, the  above inequality gives 
\begin{align*}    \sum_{i=s}^{\ell-1} a[i] + \sum_{r=\ell}^{t-1}  d[j]  
\ge   \sum_{i=s}^{\ell-1} a[i] + \sum_{r=\ell}^{t-1}  d[j]   
+ \biggl\{  \; \sum_{i=\ell}^{n} a[i]  -  \sum_{j=\ell}^{n} d[j]\biggr\}  
=  \sum_{i=s}^{n} a[i] + \sum_{r=n+1}^{t-1}  d[j]  . 
\end{align*} 
This implies that there is a  minimizer $\ell^*$ of the right-hand side of \eqref{qq108} that satisfies $\ell^*\in\lzb n+1,t\rzb$. Since $\srvv=\depv$ throughout $\lzb n+1,t-1\rzb$,  at $\ell=\ell^*$ the two expressions in braces in \eqref{qq108} agree. Hence   we have $\le$ in \eqref{qq108}, and \eqref{qq108} has been verified.  

Next from \eqref{qq108} we derive 
\be\label{qq118}\begin{aligned}  
\min_{\ell\in\lzb s,t\rzb}\biggl\{  \; \sum_{i=s}^{\ell-1} a[i] + \sum_{j=\ell}^{t-1} d[j]\biggr\}
=
\min_{\ell\in\lzb s,t\rzb}\biggl\{  \; \sum_{i=s}^{\ell-1} (a[i]+u[i])  + \sum_{r=\ell}^{t-1}  d[j]\biggr\}
\end{aligned} \ee
which completes the proof of the lemma.  Again if $\bq{u}=0$ throughout the interval then \eqref{qq118} holds, and in general we have $\le$ in \eqref{qq118}. 

Suppose now that $m$ is the time of the first unused service in $\lzb s,t-1\rzb$.  This implies that the queue is empty after the service at time $m$, and afterwards the departures cannot outnumber the arrivals:  for each $n\ge m+1$, 
$   \sum_{i=m+1}^{n} a[i] \ge  \sum_{j=m+1}^{n} d[j]. $
Furthermore, the unused service forces $d[m]=0$, and hence for all $n\ge m$, 
\[   \sum_{i=m}^{n} a[i] \ge  \sum_{j=m}^{n} d[j]. \]  
Now consider $\ell\in\lzb m,t-1\rzb$ on the left-hand side of \eqref{qq118}: 
\begin{align*} 
\sum_{i=s}^{\ell-1} a[i] + \sum_{j=\ell}^{t-1} d[j]  = \sum_{i=s}^{m-1} a[i] + \sum_{j=m}^{t-1} d[j] + \biggl\{ \; \sum_{i=m}^{\ell-1} a[i] -  \sum_{j=m}^{\ell-1} d[j] \biggr\}    \ge \sum_{i=s}^{m-1} a[i] + \sum_{j=m}^{t-1} d[j] . 
\end{align*} 
Thus there is a  minimizer $\ell'$ of  the left-hand side of \eqref{qq118} that satisfies $\ell'\in\lzb s,m-1\rzb$. On this range $\bq{u}=0$. We conclude that $\ge$ holds in \eqref{qq118}.
\end{proof} 

\end{lemma}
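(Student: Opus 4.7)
My plan is to prove \eqref{qq100} by interpolating through the intermediate quantity
\[
M(s,t) \;=\; \min_{\ell\in\lzb s,t\rzb}\Bigl\{\sum_{i=s}^{\ell-1} a[i] + \sum_{j=\ell}^{t-1} d[j]\Bigr\},
\]
and to argue in two symmetric steps that the left-hand side of \eqref{qq100} equals $M(s,t)$ and that $M(s,t)$ equals the right-hand side. The philosophy is that $\srvv = \depv + \bq{u}$ and $\bq{r} = \arrv + \bq{u}$, so moving from one side of \eqref{qq100} to the other amounts to shifting the unused-service mass $\bq{u}$ from the ``service'' term to the ``arrival'' term; the two endpoints of this shift are separated by the intermediate quantity $M(s,t)$ which carries $\arrv$ on the left and $\depv$ on the right.

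For the first equality (left-hand side $=M(s,t)$), the bound $s[j]\ge d[j]$ makes $\ge$ trivial. For $\le$, consider the last time $n$ of an unused service in $\lzb s,t-1\rzb$ (and handle the trivial case $\bq{u}=0$ separately). Unused-ness at $n$ forces the queue to be empty just after $n$, which in turn forces $\sum_{i=k}^n a[i]\le \sum_{j=k}^n d[j]$ for every $k\in\lzb s,n\rzb$: otherwise some unserved arrival would still be in the queue at time $n$ and would be served rather than letting $s(n)=1$ go unused. This inequality shows that any candidate $\ell\le n$ in $M(s,t)$ is dominated by $\ell=n+1$, so $M(s,t)$ admits a minimizer $\ell^\ast\ge n+1$. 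On $\lzb\ell^\ast,t-1\rzb$ there are no unused services, so $\srvv=\depv$ there and the left-hand side of \eqref{qq100} evaluated at $\ell^\ast$ equals $M(s,t)$, giving $\le$.

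For the second equality ($M(s,t)=$ right-hand side), the bound $r[i]\ge a[i]$ makes $\ge$ trivial. For $\le$, consider the first time $m$ of an unused service in $\lzb s,t-1\rzb$ (again handling $\bq{u}=0$ separately). Unused-ness at $m$ forces the queue to be empty immediately after $m$, hence arrivals never catch up with departures afterwards: $\sum_{i=m}^n a[i]\ge \sum_{j=m}^n d[j]$ for all $n\ge m$, where one uses also $d[m]=0$. This pushes any candidate $\ell\ge m$ in $M(s,t)$ down to $\ell=m-1$ or earlier, so $M(s,t)$ admits a minimizer $\ell'\le m-1$. On $\lzb s,\ell'-1\rzb$ we have $\bq{u}=0$, hence $\bq{r}=\arrv$ there, so the right-hand side of \eqref{qq100} evaluated at $\ell'$ equals $M(s,t)$.

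The proof is really two mirror-image applications of a single queue-theoretic lemma (empty queue $\Leftrightarrow$ no arrival backlog), with the last unused service used in one step and the first unused service in the other. I expect the most delicate point to be choosing correctly which extremum ``pushes'' the minimizer to which side and carefully bookkeeping the inclusive/exclusive endpoints and the $d[m]=0$ identity; beyond that the two steps are routine monotonicity arguments.
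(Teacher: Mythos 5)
Your proposal is correct and is essentially the paper's own proof: your intermediate quantity $M(s,t)$ is exactly the common middle term in the paper's two-step argument (\eqref{qq108} and \eqref{qq118}), with the same trivial directions from $\srvv\ge\depv$ and $\bq{r}\ge\arrv$, and the same last/first unused-service arguments (queue emptiness forcing $\sum a \le \sum d$ up to $n$, respectively $\sum a \ge \sum d$ from $m$ with $d[m]=0$) pushing the minimizer into the region where $\srvv=\depv$, respectively $\bq{u}=0$. The only blemishes are cosmetic: the $\le$/$\ge$ labels in your second step are swapped in the prose (the inequalities you actually prove are the right ones), and, as in the paper, the minimizer in that step need only be taken in $\lzb s,m\rzb$ rather than $\lzb s,m-1\rzb$, which suffices since only $\bq{u}=0$ on $\lzb s,\ell'-1\rzb$ is used.
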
 			
			
%
%
%

\begin{proposition} \label{pr:q259}  
The tandem queuing maps have these properties.  

{\rm (i)}   For all $n\ge 3$ and  $k\in\lzb1,n-1\rzb$, 
\be\label{q258}
D^n(\bq{x}_1,\bq{x}_2,\dotsc,\bq{x}_n)=D^{k+1}\big(D^{n-k}(\bq{x}_1,\dotsc,\bq{x}_{n-k}), \bq{x}_{n-k+1},\dotsc, \bq{x}_n\big). 
\ee

{\rm (ii)}  For all $n\ge 3$ and  $k\in\lzb2,n-1\rzb$, 
\be\label{q259}  D^n(\bq{x}_1,\bq{x}_2,\dotsc,\bq{x}_n) = D^n\bigl(\bq{x}_1,\dotsc, \bq{x}_{k-1}, \,  R(\bq{x}_k,\bq{x}_{k+1}), \,  D(\bq{x}_k,\bq{x}_{k+1}), \, \bq{x}_{k+2},\dotsc, \bq{x}_n \bigr). 
\ee
Note that the initial segment $\bq{x}_1,\dotsc, \bq{x}_{k-1}$ is not allowed to be empty, but the final segment $\bq{x}_{k+2},\dotsc, \bq{x}_n$ does disappear in the case $k=n-1$. 
\end{proposition}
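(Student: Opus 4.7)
The plan is to reduce (ii) to a single three-sequence interchangeability identity by repeated use of (i), and then establish that identity by combining the min-path formula \eqref{qq95} with the swap identity \eqref{qq100}. Part (i) itself is essentially associativity.

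For (i), I will unfold the recursive definition \eqref{eq18}. By definition, $D^n(\bq{x}_1,\dotsc,\bq{x}_n) = D(D^{n-1}(\bq{x}_1,\dotsc,\bq{x}_{n-1}),\bq{x}_n)$, which iterates to the nested composition $D(D(\cdots D(\bq{x}_1,\bq{x}_2),\bq{x}_3)\cdots,\bq{x}_n)$; the right-hand side of \eqref{q258} unfolds to the same expression after expanding $D^{n-k}(\bq{x}_1,\dotsc,\bq{x}_{n-k})$ inside $D^{k+1}(\,\cdot\,,\bq{x}_{n-k+1},\dotsc,\bq{x}_n)$. A one-line induction on $k$ (at fixed $n$) makes this precise.

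For (ii), I will first reduce to the three-sequence identity $D(D(\bq{a},\bq{s}_1),\bq{s}_2) = D\bigl(D(\bq{a},R(\bq{s}_1,\bq{s}_2)),\,D(\bq{s}_1,\bq{s}_2)\bigr)$, valid for any arrival sequence $\bq{a}$ and service sequences $\bq{s}_1,\bq{s}_2$. Setting $\bq{a}:=D^{k-1}(\bq{x}_1,\dotsc,\bq{x}_{k-1})$ and applying (i) twice, first grouping $\bq{x}_1,\dotsc,\bq{x}_{k-1}$ into $\bq{a}$ and then extracting the block $(\bq{a},\bq{x}_k,\bq{x}_{k+1})$, rewrites
\[
D^n(\bq{x}_1,\dotsc,\bq{x}_n)=D^{n-k}\bigl(D^3(\bq{a},\bq{x}_k,\bq{x}_{k+1}),\bq{x}_{k+2},\dotsc,\bq{x}_n\bigr).
\]
The same grouping applied to the right-hand side of \eqref{q259} produces the identical expression with $D^3(\bq{a},\bq{x}_k,\bq{x}_{k+1})$ replaced by $D^3\bigl(\bq{a},R(\bq{x}_k,\bq{x}_{k+1}),D(\bq{x}_k,\bq{x}_{k+1})\bigr)$, so equality of these two three-sequence outputs propagates through the outer $D^{n-k}$ and yields \eqref{q259}. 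The case $k=n-1$ has empty tail and is itself the three-sequence identity.

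To prove the three-sequence identity, fix $i_0\in\Z$ and let $\bq{d},\bq{d}'$ denote the two sides. By Lemma \ref{lm:qq99}(i) applied to both tandems, I can choose $n\le i_0$ small enough that replacing $\bq{a}$ with its truncation $\bq{a}_{n,0}$ leaves both $d[i_0]$ and $d'[i_0]$ unchanged. Under the truncated arrivals, Lemma \ref{lm:qq99}(ii) delivers the explicit representation
\[
\sum_{i=n}^{t-1} d[i]\;=\;\min_{\ell,v:\,n\le\ell\le v\le t}\Bigl\{\sum_{i=n}^{\ell-1}a[i]+\sum_{i=\ell}^{v-1}s_1[i]+\sum_{j=v}^{t-1}s_2[j]\Bigr\}
\]
for every $t\ge n+1$, and the analogous formula for $d'$ with $s_1,s_2$ replaced by $R(\bq{s}_1,\bq{s}_2)$ and $D(\bq{s}_1,\bq{s}_2)$. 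For each fixed $\ell$, the inner minimum over $v\in\lzb\ell,t\rzb$ coincides on the two sides by identity \eqref{qq100} applied with the roles of $\arrv$ and $\srvv$ there played by $\bq{s}_1$ and $\bq{s}_2$ on $[\ell,t]$. Hence the double minima agree, and differencing in $t$ gives $d[i_0]=d'[i_0]$. The substantive content is already carried by Lemma \ref{lm:qq99} and identity \eqref{qq100}, so the main obstacle is purely bookkeeping: keeping straight the iterated associativity in the reduction step and correctly matching the swap \eqref{qq100} inside the min formula.
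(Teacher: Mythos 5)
Your proposal is correct and follows essentially the same route as the paper: part (i) is the left-fold associativity of $D^n$, and part (ii) reduces via (i) to the three-sequence identity $D^3(\bq{a},\bq{s}_1,\bq{s}_2)=D^3(\bq{a},R(\bq{s}_1,\bq{s}_2),D(\bq{s}_1,\bq{s}_2))$, which both you and the paper establish by truncating arrivals via Lemma~\ref{lm:qq99}(i), writing the cumulative departures as a double minimum via \eqref{qq95}, and swapping the inner minimum via \eqref{qq100}. The only cosmetic difference is that the paper factors through an intermediate step (the case $k=n-1$, its \eqref{q260}) before treating general $k$, whereas you group $\bq{x}_1,\dotsc,\bq{x}_{k-1}$ into a single arrival stream and jump directly to the $n=3$ case.
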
 

\begin{proof}   Part (i).  The case $k=1$ is the definition of $D^n$  and the case $k=n-1$ is a tautology. Hence the case $n=3$ holds.   Let $n\ge4$ and assume that part (i) holds for $n-1$.  Let $k\in\lzb2,n-2\rzb$.
\begin{align*}
D^n(\bq{x}_1,\bq{x}_2,\dotsc,\bq{x}_n) 
&\overset{\eqref{eq18}}=  D\big(D^{n-1}(\bq{x}_1,\bq{x}_2,\dotsc,\bq{x}_{n-1}),\bq{x}_n\big)   \\
&=  D\big( D^k\bigl[   D^{n-k}(\bq{x}_1,\dotsc,\bq{x}_{n-k}),  \bq{x}_{n-k+1},\dotsc, \bq{x}_{n-1})\bigr] ,  \bq{x}_n\big) \\
&\overset{\eqref{eq18}}= D^{k+1}\big(D^{n-k}(\bq{x}_1,\dotsc,\bq{x}_{n-k}), \bq{x}_{n-k+1},\dotsc, \bq{x}_n\big). 
\end{align*} 

\smallskip 

Part (ii).  \textit{Step 1.}  We prove  the case $n=3$ of \eqref{q259}.    The task is to show 
\begin{equation}\label{q250} 
 D(\bq{x}_1,\bq{x}_2,\bq{x}_3)=D\big(\bq{x}_1,\, R(\bq{x}_2,\bq{x}_3),\,D(\bq{x}_2,\bq{x}_3)\big). 	
			  \end{equation}

By part (i) of Lemma \ref{lm:qq99} it suffices to treat the case where there exists $k\in\Z$ such that  $x_1[i]=0$ for $i\le k-1$ and then let $k\searrow-\infty$. 	Then by \eqref{qq95}, for $t\ge k+1$, 
\begin{align*}
\sum_{i=k}^{t-1} D_i(\bq{x}_1,\bq{x}_2,\bq{x}_3) 
&=  \min_{\ell: \, k\le \ell\le  t} \Bigl\{ \; \sum_{i=k}^{\ell-1} x_1[i] + \min_{v: \,\ell\le v\le t}  \Bigl[ \;  \sum_{i=\ell}^{v-1} x_2[i] +   \sum_{j=v}^{t-1} x_3[j] \Bigr]  \Bigr\} \\
&\overset{\eqref{qq100}}=  \min_{\ell: \, k\le \ell\le  t} \Bigl\{ \; \sum_{i=k}^{\ell-1} x_1[i] + \min_{v: \,\ell\le v\le t}  \Bigl[ \;  \sum_{i=\ell}^{v-1} R_i(\bq{x}_2,\bq{x}_3) +   \sum_{j=v}^{t-1} D_j(\bq{x}_2,\bq{x}_3) \Bigr]  \Bigr\} \\
&=\sum_{i=k}^{t-1} D_i\big(\bq{x}_1,\, R(\bq{x}_2,\bq{x}_3),\,D(\bq{x}_2,\bq{x}_3)\big). 
\end{align*}

\textit{Step 2.}   We prove  the case $k=n-1$   for all $n\ge 3$: 
\be\label{q260}  D^n(\bq{x}_1,\bq{x}_2,\dotsc,\bq{x}_n) = D^n\bigl(\bq{x}_1,\dotsc, \bq{x}_{n-2}, \,  R(\bq{x}_{n-1},\bq{x}_{n}), \,  D(\bq{x}_{n-1},\bq{x}_{n}) \bigr). 
\ee
 The case $n=3$ is in \eqref{q250}.  By \eqref{q258}, the case $n=3$ of \eqref{q260}, and again by \eqref{q258}, 
 \begin{align*}
 D^n(\bq{x}_1,\bq{x}_2,\dotsc,\bq{x}_n) 
 &=D^3\bigl(   D^{n-2}(\bq{x}_1,\bq{x}_2,\dotsc,\bq{x}_{n-2}) , \bq{x}_{n-1}, \bq{x}_{n} \bigr)  \\
 &=D^3\bigl(   D^{n-2}(\bq{x}_1,\bq{x}_2,\dotsc,\bq{x}_{n-2}) , \,  R(\bq{x}_{n-1},\bq{x}_{n}), \,  D(\bq{x}_{n-1},\bq{x}_{n}) \bigr)  \\
 &= D^n\bigl(\bq{x}_1,\dotsc, \bq{x}_{n-2}, \,  R(\bq{x}_{n-1},\bq{x}_{n}), \,  D(\bq{x}_{n-1},\bq{x}_{n}) \bigr). 
 \end{align*} 
 
 \textit{Step 3.}   We complete the proof of \eqref{q259} by taking $n\ge4$ and   $k\in\lzb2,n-2\rzb$: 
 \begin{align*}
 D^n(\bq{x}_1,\bq{x}_2,\dotsc,\bq{x}_n) 
 &\overset{\eqref{q258}}=D^{n-k}\bigl(   D^{k+1}[\bq{x}_1,\dotsc ,\bq{x}_{k},\bq{x}_{k+1}] , \bq{x}_{k+2}, \dotsc, \bq{x}_{n} \bigr)  \\
 &\overset{\eqref{q260}}=D^{n-k}\bigl(   D^{k+1}[\bq{x}_1,\dotsc , \bq{x}_{k-1}, R(\bq{x}_{k},\bq{x}_{k+1}), D(\bq{x}_{k},\bq{x}_{k+1}) ]  , \bq{x}_{k+2}, \dotsc, \bq{x}_{n} \bigr)  \\
 &\overset{\eqref{q258}}=D^n\bigl(\bq{x}_1,\dotsc, \bq{x}_{k-1}, \,  R(\bq{x}_k,\bq{x}_{k+1}), \,  D(\bq{x}_k,\bq{x}_{k+1}), \, \bq{x}_{k+2},\dotsc, \bq{x}_n \bigr). 
 \end{align*} 
\end{proof}

For $\bq{x}\in\Qs_1$, $\rho \in(0,1)$ and integers $i\leq j$ we denote the sequence centered at $\rho$ by 
\begin{equation}\label{rw}
	{x}^\rho[i,j]={x}[i,j]-\rho(j-i+1).
\end{equation} 
Recall the truncation notation \eqref{notat67}. For 
$\arrv,\srvv\in\Qs_1$ define the  queuing map that ignores arrivals and services before time $i$:  
\begin{equation*}
	D_{i,0}(\arrv,\srvv)=D(\arrv_{i,0},\srvv_{i,0}). 
\end{equation*}
More generally, for $n\in\N$ and $\bq{x}^1,\dotsc,\bq{x}^n\in\Qs_1$, 
\begin{equation}\label{eq42}
	D^n_{i,0}(\bq{x}^1,\dotsc,\bq{x}^n)=D_{i,0}\big(D^{n-1}_{i,0}(\bq{x}^1,\dotsc,\bq{x}^{n-1}),\bq{x}^n\big)=D^n(\bq{x}_{i,0}^1,\dotsc,\bq{x}_{i,0}^n).
\end{equation}
\begin{lemma}\label{lem:q}
	For $\arrv,\srvv\in \Qs_1$, let $\depv=D(\arrv,\srvv)$ and $\bq{r}=R(\arrv,\srvv)$. 
	\begin{enumerate}  [label={\rm(\roman*)}, ref=({\rm\roman*})]   \itemsep=1pt  
		\item  We have the inequalities 
		\begin{equation}\label{bq1}
			\depv\preceq \srvv 
   \qquad\text{and}\qquad  \arrv\preceq \bq{r}.
		\end{equation}
		\item \label{bq0}Fix $i\in\Z$. For any $n\in\N$ and $l\in [i,\infty)$, the map $D^n_{i,0}(\aabullet,\dotsc,\aabullet)[i,l]$ from \eqref{eq42} is non-decreasing in all its variables.
		\item Fix  $i\in\Z$. Let $n\geq 2$ and $\bq{x}_1,\bq{x}_2,\dotsc,\bq{x}_n\in\Qs_1$ be such that the departure processes  $\depv_n=D(\bq{x}_1,\bq{x}_2,\dotsc,\bq{x}_n)$ and $\bq{f}_{n}=D_{i,0}(\bq{x}_1,\bq{x}_2,\dotsc,\bq{x}_n)$ are well-defined. Then
		\begin{equation}\label{bq4}
				\bq{f}_n\preceq \bq{d}_n.
		\end{equation}
	Moreover, the following bounds hold 
		\begin{align}
			\max_{l\in [i,j]}\bigl\{ -{f}_n^{\rho}[i,l] \bigr\} &\leq  2\sum_{k=1}^n\max_{l\in [i,j]}|{x}_{k}^{\rho}[i,l]|\label{bq3b},\\
			\max_{l\in [i,j]}|{d}_n^{\rho}[i,l]|&\leq  2\sum_{k=1}^n\max_{l\in [i,j]}|{x}_{k}^{\rho}[i,l]|\label{bq3a}.
		\end{align}
	\end{enumerate}
\end{lemma}
\begin{proof}
 The inequalities in \eqref{bq1} 
 follow directly from the definitions. The proof of  Item \ref{bq0} is by induction, 
 note that
 \begin{equation}\label{eq43}
 	\begin{aligned}
 		{f}_2(\arrv,\srvv)[i,j]&= a[i,j]-Q_j(\arrv_{i,0},\srvv_{i,0})= a[i,j]-\max_{l\in [i,j]}\big[ a[l,j]- s[l,j]\big]^+\\
 		&= a[i,j]\wedge\min_{l\in[i,j]}\big[ a[i,l-1]+ s[l,j]\big].
 	\end{aligned}
 \end{equation}
 It is not hard to see from the display above that ${f}_2(\cdot,\cdot)[i,j]$ is indeed non-decreasing in both its variables, proving the base case $n=2$. For the induction step, assume Item \ref{bq0} holds for $n-1$, then 
  \begin{equation*}
 		{f}_n(\bq{x}_1,\dotsc,\bq{x}_n)[i,j]={f}_2\big(\bq{f}_{n-1}(\bq{x}_1,\dotsc,\bq{x}_{n-1}),\bq{x}_n\big)[i,j]
 		={f}_{n-1}[i,j]\wedge\min_{l\in[i,j]}\big[{f}_{n-1}[i,l-1]+
 		{x}_n[l,j]\big],
 \end{equation*}
which implies that $\bq{f}_n$ is indeed non-decreasing in all its variables.
    We continue to prove \eqref{bq4} by induction. Clearly, as $f_2(l)=\infty$ for $l\leq i-1$, we only need to verify that for $l\in [i,\infty]$, if $f_2(l)=1$ then $d_2(l)=1$.  Observe that the only difference between the output in $[i,\infty)$ of the queue $D_{i,0}(\arrv,\srvv)$ and $D(\arrv,\srvv)$ is that the first $Q_{i-1}(\arrv,\srvv)$ unused services after $i-1$ of the former, are replaced with departure times for the latter. This proves the base case $n=2$. For the induction step, assume \eqref{bq4} holds for $n-1$. Then
 \begin{equation*}
 \begin{aligned}
 		D(\bq{x}_1,\dotsc,\bq{x}_n)&=D\big(D(\bq{x}_1,\dotsc,\bq{x}_{n-1}),\bq{x}_n\big) \succeq D\big(D_{i,0}(\bq{x}_1,\dotsc,\bq{x}_{n-1}),\bq{x}_n\big)\\
 		&\succeq D_{i,0}\big(D_{i,0}(\bq{x}_1,\dotsc,\bq{x}_{n-1}),\bq{x}_n\big)=D_{i,0}(\bq{x}_1,\dotsc,\bq{x}_n),
 \end{aligned}
 \end{equation*}
where in the first inequality we used that $D(\cdot,\cdot)$ is increasing in the first variable. This proves \eqref{bq4}.
  Next we show \eqref{bq3b}. From \eqref{eq43}
	\begin{equation}\label{eq15}
		\begin{aligned}
			& \rho(j-i+1)-{f}_2[i,j]= \rho(j-i+1)+\bigl\{- a[i,j]\vee\max_{l\in [i,j]}\big[- a[i,l-1]- s[l,j]\big]\bigr\}\\
			&=\rho(j-i+1)+\bigl\{- a[i,j]\vee\max_{l\in [i,j]}\big[- a[i,l-1]-\big( s[i,j]- s[i,l-1]\big)\big]\bigr\}\\
			&\leq \max_{l\in [i,j]}\bigl\{-a^{\rho}[i,l]\bigr\}+2\max_{l\in [i,j]}|s^{\rho}[i,l]|
		\end{aligned}
	\end{equation}
	Applying \eqref{eq15} repeatedly gives 
	\[
				\max_{k\in [i,j]}-{f}_2^{\rho}[i,k]\leq \max_{l\in [i,j]}\bigl\{-a^{\rho}[i,l]\bigr\}+2\max_{l\in [i,j]}|s^{\rho}[i,l]|. 
	\]
	The base case $n=2$ has been verified. 
	Next suppose \eqref{bq3b} holds for $n-1$. Then by the base case, 
	\[ 
			\max_{l\in [i,j]}-{f}_n^{\rho}[i,l]\leq  \max_{l\in [i,j]}-{f}_{n-1}^{\rho}[i,l]+2\max_{l\in [i,j]}|{x}_n^{\rho}[i,l]|,
		\]
	which implies the induction step, and proves \eqref{bq3b}. To show \eqref{bq3a}, it is enough to show
	\begin{align}
		\max_{l\in [i,j]}{d}_n^{\rho}[i,l]&\leq  \max_{l\in [i,j]}\bq{x}_{n}^{\rho}[i,l]\label{eq17}\\	\text{and} \quad \max_{l\in [i,j]}-{d}_n^{\rho}[i,l]&\leq \label{eq16} 2\sum_{k=1}^n\max_{l\in [i,j]}|{x}_{k}^{\rho}[i,l]|.
	\end{align}
		Inequality  \eqref{eq17}  follows from  \eqref{bq1} and 
	$
		D(\bq{x}_1,\bq{x}_2,\dotsc,\bq{x}_n)=D\big(D(\bq{x}_1,\bq{x}_2,\dotsc,\bq{x}_{n-1}),\bq{x}_n\big)\preceq \bq{x}_n
		$. Inequality  \eqref{eq16} follows from \eqref{bq4} and \eqref{bq3b}.
\end{proof}

For $n\in\N$, $\bq{x}\in\Qs_n$, $k\in\lzb1,n\rzb$ and a strictly increasing vector  $\bar{i}=(i_1,i_2,\dotsc,i_k)$ of integers  in $\lzb1,n\rzb$, let $\Phi[\bq{x};\bar{i}]\in\Qs_k$ be defined through
\begin{equation}\label{Phi}
\Phi[\bq{x};\bar{i}](j)\leq 
l  \text{ if and only if }\,\,\bq{x}(j)\leq i_l \qquad \forall l\in\lzb1,k\rzb.
\end{equation}
In other words, the process $\Phi[\bq{x};\bar{i}]$ relabels the classes as follows: $\lzb1, i_1\rzb\longrightarrow$ class 1, $\lzb i_1+1, i_2\rzb\longrightarrow$ class 2, and so on, up to new class $k$. 

\begin{lemma}\label{lem:Phi}
	Let $\bar{\lambda}=(\lambda_1,\dotsc,\lambda_n)$, $\bar{\bq{x}}=(\bq{x}_1,\dotsc,\bq{x}_n)\sim \nu^{\bar{\lambda}}$ and  $\bq{v}_n=\Vmap_n(\bar{\bq{x}})$. Let $0=i_0<i_1<i_2<\dotsm<i_k\le n$ and $\bar{i}=(i_1,i_2,\dotsc,i_k)$. Then
	\begin{equation*}
		\Phi[\bq{v}_n;\bar{i}]\sim\Vmap_k(\bq{x}_{i_1},\bq{x}_{i_2},\dotsc,\bq{x}_{i_k}).
	\end{equation*}
\end{lemma}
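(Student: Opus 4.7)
The plan is to invoke the uniqueness in Theorem~\ref{thm:TASEPprod} of the multi-class stationary measure, applied in its left-jump form via Theorem~\ref{thm:FM}. First identify the target parameters: writing $\mu_l := \sum_{j=i_{l-1}+1}^{i_l}\lambda_j$ for $l\in\lzb1,k\rzb$, the input $\bq{x}_{i_l}$ of $\Vmap_k(\bq{x}_{i_1},\dotsc,\bq{x}_{i_k})$ is i.i.d.\ Bernoulli with intensity $\sum_{j=1}^{i_l}\lambda_j = \sum_{j=1}^{l}\mu_j$, which matches what $\Vmap_k$ expects of its $l$th input in the definition of $\mu^{\bar\mu}$. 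Thus by \eqref{mu} and Theorem~\ref{thm:FM}, $\Vmap_k(\bq{x}_{i_1},\dotsc,\bq{x}_{i_k})\sim \mu^{\bar{\mu}}$, the unique translation-invariant stationary measure for $k$-type TASEP with left jumps having class-$l$ density $\mu_l$ and i.i.d.\ Bernoulli projections $\ind[\,\cdot\,\le l]$ of intensity $\sum_{j=1}^{l}\mu_j$.

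It therefore suffices to verify that $\Phi[\bq{v}_n;\bar{i}]$ enjoys the same four characterizing properties. Translation invariance is immediate from translation invariance of $\bq{v}_n\sim\mu^{\bar\lambda}$, since $\Phi[\,\cdot\,;\bar{i}]$ commutes with lattice shifts. For the Bernoulli projection, the definition~\eqref{Phi} yields the pointwise identity $\ind[\Phi[\bq{v}_n;\bar{i}]\le l]=\ind[\bq{v}_n\le i_l]$; the right-hand side is i.i.d.\ Bernoulli of intensity $\sum_{j=1}^{i_l}\lambda_j=\sum_{j=1}^{l}\mu_j$ by Theorem~\ref{thm:FM} applied to $\bq{v}_n$, and the class-$l$ marginal $\mu_l$ follows by subtraction.

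The remaining, and main, task is the stationarity of $\Phi[\bq{v}_n;\bar{i}]$ under $k$-type TASEP with left jumps. I would establish this by a pathwise commutation argument: for any initial $\eta_0\in\Qs_n$, if $\{\eta_t\}_{t\ge0}$ is the $n$-type TASEP driven by the bond Poisson clocks, then $\{\Phi[\eta_t;\bar{i}]\}_{t\ge0}$ is the $k$-type TASEP driven by the same clocks from $\Phi[\eta_0;\bar{i}]$. Indeed, at a left-jump ring on the bond $(x-1,x)$, the $n$-TASEP swaps the contents of $x$ and $x-1$ iff $\eta_t(x)<\eta_t(x-1)$; splitting into cases depending on whether $\eta_t(x)$ and $\eta_t(x-1)$ fall in the same $\Phi$-group shows that the swap either leaves $\Phi[\eta_t;\bar{i}]$ unchanged (same group) or projects exactly to the $k$-TASEP swap triggered by $\Phi[\eta_t;\bar{i}](x)<\Phi[\eta_t;\bar{i}](x-1)$. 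Combined with the stationarity of $\bq{v}_n$ under $n$-type TASEP from Theorem~\ref{thm:FM}, this yields the stationarity of $\Phi[\bq{v}_n;\bar{i}]$, and invoking uniqueness completes the proof. The commutation step is the principal technical point; it is a standard manifestation of the fact that monotone coarsenings of class labels in multi-type exclusion remain multi-type exclusions, but deserves explicit case-by-case verification.
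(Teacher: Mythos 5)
Your proof is correct and takes essentially the same route as the paper's: both argue via the uniqueness of the multi-type stationary measure and the fact that the coarsening map $\Phi[\,\cdot\,;\bar{i}]$ commutes with the exclusion dynamics. The paper's version is terser, merely asserting shift-ergodicity preservation and commutation (with a pointer to Lemma~\ref{lem:FU}) and then invoking uniqueness, while you spell out the verification of the conditions in Theorem~\ref{thm:TASEPprod} (translation invariance, densities, Bernoulli projections) and the pathwise bond-by-bond commutation argument explicitly; that is a reasonable expansion of the same idea rather than a different approach.
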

\begin{proof}
By Theorem \ref{thm:FM},  the distribution of $\bq{v}_n$ is $\mu^{\bar\lambda}$, the unique spatially ergodic invariant distribution of $n$-type TASEP. 
	The map $\Phi$ preserves shift-ergodicity and  commutes with the TASEP dynamics (Lemma~\ref{lem:FU}). Thus $\Phi[\bq{v}_n;\bar{i}]$ has  the unique shift-ergodic stationary distribution of $k$-type TASEP, with density   $\sum_{l=i_{m-1}+1}^{i_{{m}}}\lambda_{l}$ of particles of class $m\in\{1,\dotsc,k\}$. This distribution must be that of   $\Vmap_k(\bq{x}_{i_1},\bq{x}_{i_2},\dotsc,\bq{x}_{i_k})$.
\end{proof}

\section{$D$ space}

We review first general facts about the space $D(\R,S)$ of cadlag functions from $\R$ into a complete, separable metric space $(S,d)$.  In the next section we specialize to the path space relevant for this paper where 
  $S=C(\R)$ with its Polish topology of uniform convergence on compact subsets, metrized by  the metric $d$ in \eqref{d}. 

\subsection{$D(\R,S)$}

First    we recall  the complete separable metric for the 
Polish Skorokhod topology on the space $D(\R,S)$ and then state a criterion for distributional convergence on this space.   Let $\Lambda$ be the set of  continuous bijections $\lambda:\R\to\R$ such that  
\begin{align*}
	\gamma(\lambda)=\sup_{s<t}\Big|\log\frac{\lambda(s)-\lambda(t)}{s-t}\Big|<\infty.
\end{align*}
For $x,y\in D(\R,S)$, $\lambda\in\Lambda$ and $u>0$  define
\begin{align*}
	r(x,y,\lambda,u)=\sup_{ t\ge0}d\big(x(t\wedge u),y(\lambda(t)\wedge u)\big)
 \vee \sup_{t\le 0}d\big(x(t\vee (-u)),y(\lambda(t)\vee (-u))\big). 
\end{align*}
Then a complete separable metric on $D(\R,S)$ is given by 
\[
	r(x,y)=\inf_{\lambda\in\Lambda}\big[\gamma(\lambda)\vee \int_0^\infty e^{-u}r(x,y,\lambda,u)\,du \big]. 
\]

We state the weak convergence criterion that we utilize   for processes with paths in $D(\R,S)$. The ingredients are standard and spelled out in Lemma A.17 in \cite{Busani-2021}.   For $X\in D(\R,S)$, define
\begin{align}\label{theta}
	\theta_X[a,b)=\sup_{s,t\in[a,b)}d\big(X(t),X(s)\big),
\end{align}
and then  the modulus of continuity $\omega:D(\R,S)\times \R_+ \times \R_+\rightarrow \R_+$  as  
\be\label{moc}\begin{aligned}
	\omega(X,t,\delta)=\inf\bigl\{\max_{1\leq i \leq n} \theta_X[t_{i-1},t_i): \; &\exists n\geq 1,\; -t=t_0<t_1< \dots<t_n=t \\
	&\text{such that $t_i-t_{i-1}>\delta$ for all $i\leq n$}\bigr\}.
\end{aligned}\ee

\begin{lemma}\label{lem:conv}
	Let $\{X^N\}_{N\in\N}$ be a random sequence in $D(\R,S)$.  Let $\mathbf T\subseteq \R$ be dense. Assume conditions \eqref{ss}--\eqref{fdc} below. 
	\begin{enumerate} [label={\rm(\roman*)}, ref={\rm\roman*}]   \itemsep=3pt  
		\item \label{ss} For each $t\in \mathbf T$ and $0<\delta, \epsilon<1$, there exist finite  $C(t,\epsilon)$   and $N_1(t,\epsilon,\delta)$  such that  
		\begin{align*}
			\Pp\Big(\sup_{u,v\in(t-\delta,t+\delta]}d(X^N_u,X^N_v)>\epsilon\Big)<C\delta \qquad\text{ for } N>N_1.
		\end{align*}
		\item \label{cfd} For each $k\in\N$ and $k$-tuple  $(t_1, \dotsc,t_k)\in \mathbf T^k$,  there exists a  probability distribution $p_{t_1, \dotsc,t_k}$ on $S^k$ such that $(X^N_{t_1}, \dotsc,X^N_{t_k})\Rightarrow p_{t_1, \dotsc,t_k}$.
		
		\item \label{fdc}For every $\epsilon>0$ and $T>0$,\; $\ddd\lim_{\delta \rightarrow 0} \limsup_{N\rightarrow \infty} \Pp\big(\omega(X^N,T,\delta)>\epsilon\big)=0.$
	\end{enumerate}
	Then there exists a unique process $X\in D(\R,S)$ with finite-dimensional distributions $\{p_{t_1, \dotsc,t_k}\}$  such that $X^N\Rightarrow X.$
	 
\end{lemma}

Next we collect various basic facts related to jumps of cadlag paths.  

\begin{lemma}\label{lm:D567}     Let $\eta\in D(\R, S)$.   
Suppose $t_n\to t$, $t_n\ne t$ for all $n$. Then $d(\eta(t_n-), \eta(t_n))\to 0$. 
\end{lemma} 
\begin{proof}
If $t_n<t$ along a subsequence,  then $\eta(t_n\pm)\to \eta(t-)$ along this subsequence. The other possibility is that $t_n>t$ along a subsequence.   Then $\eta(t_n\pm)\to \eta(t)$ along this subsequence.
\end{proof} 

The next two lemmas concern converging sequences 
 $\eta_n\to\eta$ in $D(\R, S)$.  This convergence is equivalent to the existence of a sequence of  strictly increasing bijections $\lambda_n:\R\to\R$ such that $\forall T<\infty$, 
\[ \lim_{n\to\infty} \sup_{t\in[-T,T]}  |\lambda_n(t)-t|=0
\quad\text{and}\quad 
\lim_{n\to\infty} \sup_{t\in[-T,T]}  d\bigl(\eta_n(t), \eta(\lambda_n(t))\bigr)=0. 
\]

\begin{lemma}\label{lm:D557}   Let $\eta_n\to\eta$ in $D(\R,S)$ and $a>0$. Then  for each $T<\infty$ there exists $\delta>0$ such that in each $\eta_n$, jumps of size $\ge a$ in $[-T,T]$  are separated from each other by at least $\delta$.
\end{lemma} 

\begin{proof}  In any given $\eta\in D(\R,S)$,  jumps of size $\ge a$  in $[-T,T]$  are finite in number and  separated from each other. 
 If  the lemma fails, then along some subsequence (still denoted by $n$) there exist $u_n<v_n$ in $[-T,T]$ such that $v_n-u_n\to0$,  $d(\eta_n(u_n-), \eta_n(u_n))\ge a$,  and $d(\eta_n(v_n-), \eta_n(v_n))\ge a$. Pass to a further subsequence so that $u_n\to r$ and $v_n\to r$.  Let $u'_n=\lambda_n(u_n)$ and $v'_n=\lambda_n(v_n)$.   Since the $\lambda_n$   are strictly increasing bijections of $\R$ that converge to the identity uniformly on $[-T,T]$, we also have  $u'_n<v'_n$ and $u'_n\to r$ and $v'_n\to r$.

The local uniformity given by  $D$-convergence gives 
\[     d( \eta_n(u_n\pm),  \eta(u'_n\pm) ) \to 0  
\quad\text{and}\quad 
d( \eta_n(v_n\pm),  \eta(v'_n\pm) ) \to 0. \] 
Hence for large enough $n$, 
\[  d(\eta(u'_n-), \eta(u'_n))\ge a/2  \quad\text{and}\quad   d(\eta(v'_n-), \eta(v'_n))\ge a/2. \]  
By Lemma \ref{lm:D567},  this is possible only if $u'_n=v'_n=r$ for large enough $n$, contradicting $u'_n<v'_n$. 
\end{proof}

For $R\in\R$ and $a>0$,  define the nondecreasing sequence  $\{\tau^R_{k,a}(\eta)\}_{k\in\Z_+}\subset[R,\infty]$ by  
\be\label{tRka} \begin{aligned}
 \tau^R_{k,a}(\eta) =   \inf\bigl\{  s\in[R,\infty):  & \;  d(\eta(s-),\eta(s))\ge a, \   \exists  s_0<s_1<\dotsm<s_{k-1}\in[R,s)  \\[3pt]
    &\quad  \text{ such that } d(\eta(s_j-),\eta(s_j))\ge a \; \forall j\in\lzb0,k-1\rzb \bigr\}  . 
\end{aligned}\ee 
The finite values in $\{\tau^R_{k,a}(\eta)\}_{k\in\Z_+}$ are exactly the locations in $[R,\infty)$ of 
the  jumps of $\eta$ of size $\ge a$, including a possible jump at $R$.  

\begin{lemma} \label{lm:D573}  Each   $\tau^R_{k,a}:D(\R,S)\to [R,\infty]$ is a lower semicontinuous function      and hence in particular  Borel measurable. 
\end{lemma} 

\begin{proof}   Fix $R\in\R$ and  $a>0$ and abbreviate $\tau_k=\tau^R_{k,a}$.  
Suppose $\eta_n\to\eta$ in $D(\R,S)$.  
Begin by checking  this claim  for a compact interval $[u,v]\subset\R$: 
\be\label{D534} \begin{aligned} 
&\text{Suppose $s_n\in[u,v]$ satisfy  $d(\eta_n(s_n-), \eta_n(s_n))\ge a$ for all $n\in\N$. }\\
 &\text{Then every subsequence of $\{s_n\}$ has a further subsequence with a limit } \\
 &\text{$s_n\to  s\in[u,v]$ such that $d(\eta(s-), \eta(s))  \ge a$.}
\end{aligned}\ee
Pass to a subsequence such that $s_n\to s$ and thereby also $\lambda_n(s_n)\to s$. 
 By the local uniformity implied by $D$-convergence, along a subsequence, 
 \begin{align*}
\lim_{n\to\infty} d(\eta(\lambda_n(s_n)-), \eta(\lambda_n(s_n))) 
= \lim_{n\to\infty}  d(\eta_n(s_n-), \eta_n(s_n))  \ge a. 
 \end{align*} 
 By Lemma \ref{lm:D567}, for all large enough $n$, we must have $\lambda_n(s_n)=s$.   Claim \eqref{D534} has been verified.  

\medskip 

For $k\ge0$ we have to show  
\be\label{D530} \tau_k(\eta)\le\varliminf_{n\to\infty}\tau_k(\eta_n).  \ee
  We can assume $\varliminf_{n\to\infty}\tau_k(\eta_n)<T<\infty$.   Restrict to a subsequence $n_j$ along which the liminf is realized for each $p\in\lzb0,k\rzb$:  
\[  s^p   =   
 \lim_{j\to\infty}  \tau_p(\eta_{n_j}) 
=  \varliminf_{n\to\infty}\tau_p(\eta_n) \; \in \; [R,T]. 
\] 
By Lemma \ref{lm:D557}, $\exists\delta>0$ such that in the limit $s^{p-1} \le s^{p}-\delta$ for $p\in\lzb1,k\rzb$.   By claim \eqref{D534},  $s^0<\dotsm<s^k$ are  locations  in $[R,T]$ of jumps of $\eta$ of size $\ge a$. 
Thus $\tau_p(\eta)\le s^p$ for each $p\in\lzb0,k\rzb$.   Lower semicontinuity has been verified.   
\end{proof}

Note that for $T<\infty$,  $\{\eta\in D(\R, S): \eta \text{ has a jump of size } b \text{ in } [-T,T] \}$ is a closed  subset of $D(\R, S)$, by adapting the proof of statement \eqref{D534}.   Hence 
\begin{align*} 
&\{\eta\in D(\R, S): \eta \text{ has a jump of size } b\} \\
&\qquad  \qquad  = \bigcup_{T=1}^\infty 
\{\eta\in D(\R, S): \eta \text{ has a jump of size } b \text{ in } [-T,T] \}  
\end{align*}  
   is an $F_\sigma$ set and thereby a Borel subset of $D(\R, S)$.   


\medskip 

\subsection{The path space $D(\R, C(\R))$ of SH} 
\label{sec:DSH} 

We specialize now to the space $D(\R, C(\R))$ relevant for the present study. A generic element of $D(\R, C(\R))$ is denoted by $\R\ni\mu\mapsto\psi_\mu(\bbullet)\in C(\R)$ and $\psi_\mu(x)\in\R$ denotes the value of the function $\psi_\mu(\bbullet)\in C(\R)$ at $x\in\R$.    
The convergence $\psi^n\to\psi$ in $D(\R, C(\R))$ means that there exist strictly increasing bijections $\lambda_n:\R\to\R$ such that the following locally uniform limits hold for all $\mu_0,M\in\R_+$: 
\be\label{psi347} \begin{aligned} 
\lim_{n\to\infty} \sup_{\mu\in[-\mu_0, \mu_0]}  |\lambda_n(\mu)-\mu|=0\,, 
\quad  
&\lim_{n\to\infty} \sup_{\mu\in[-\mu_0, \mu_0]}   \sup_{x\in[-M,M]} 
\bigl\vert \psi^n_{\mu\pm}(x) - \psi_{\lambda_n(\mu)\pm}(x)\bigr\vert =0\,, 
\\[3pt] 
\text{and}\qquad  &\lim_{n\to\infty} \sup_{\mu\in[-\mu_0, \mu_0]}   \sup_{x\in[-M,M]} 
\bigl\vert \psi^n_{\lambda_n^{-1}(\mu)\pm}(x) - \psi_{\mu\pm}(x)\bigr\vert =0. 
\end{aligned}\ee 
The notation $\lambda_n$ appears below always in this same meaning, in reference to a particular instance of $\psi^n\to\psi$.

For $\psi\in D(\R, C(\R))$ define the jump set 
\[  \Xi(\psi)=\{\mu\in\R:   \psi_\mu\ne \psi_{\mu-} \} \] 
and the difference function 
\[  J_{\mu, \psi}(x) =\psi_\mu(x)-\psi_{\mu-}(x), \qquad \mu, x\in\R. \]
The composition below shows that  $(\mu,\psi)\mapsto J_{\mu, \psi}$ is a Borel mapping of $\R\times D(\R, C(\R))$ into $C(\R)$:
  \begin{align*}
 (\mu,\psi)  \overset{\rm(a)} \mapsto (\mu,(\psi_{\mu-}, \psi_{\mu}))\overset{\rm(b)} \mapsto (\mu, \psi_{\mu}-\psi_{\mu-})  \mapsto   \psi_{\mu}-\psi_{\mu-} . 
 \end{align*}
 Step (a) takes $\R\times D(\R, C(\R))$ into $C(\R)\times C(\R)$ and is measurable  because projections are measurable on $D$-space.   Step (b) is subtraction from $C(\R)\times C(\R)$ into $C(\R)$.

For real $a>0$   set 
\be\label{smua57}   \sigma_{\mu,a}(\psi) = \inf\bigl\{ r\ge0:   \sup_{x\in[-r,r]} |J_{\mu, \psi}(x)|\ge a\bigr\}. 
\ee 
Then $ \sigma_{\mu,a}(\psi)<\infty$ implies   $\mu\in\Xi(\psi)$, while   $\mu\in\Xi(\psi)$ implies that $ \sigma_{\mu,a}(\psi)<\infty$ at least for small enough $a>0$.


\begin{lemma} \label{lm:psi3.87}  Measurability of  $\sigma_{\mu,a}(\psi)$:

{\rm (a)}   For fixed $a>0$,  the  
$\R\times D(\R, C(\R))\to[0,\infty]$  function $(\mu, \psi)\mapsto \sigma_{\mu,a}(\psi)$   is lower semicontinuous and hence   jointly Borel measurable in $(\mu, \psi)$. 

{\rm (b)}    For fixed $a>0$ and $\mu\in\R$,  the function  
$\sigma_{\mu,a}:  D(\R, C(\R))\to[0,\infty]$ is lower semicontinuous and hence  Borel measurable.
\end{lemma} 

\begin{proof}    We show that for $M\in(0,\infty)$,  $\{(\mu, \psi):  \sigma_{\mu,a}(\psi)\le M\}$ is a closed subset of $\R\times D(\R, C(\R))$.   This proves part (a). Part (b) follows by fixing $\mu$. 

  Let $\mu_n\to\mu$ in $\R$, $\psi^n\to\psi$ in $D(\R, C(\R))$, and $\sigma_{a, \mu_n}(\psi^n)\le M$.   Then $\exists x_n\in[-M,M]$ such that $|\psi^n_{\mu_n}(x_n)- \psi^n_{\mu_n-}(x_n)|\ge a$. 
 From \eqref{psi347} with $\alpha_0>|\mu|$, 
\[ \lim_{n\to\infty}  \sup_{\nu\in[-\alpha_0, \alpha_0]}  
 \sup_{x\in[-M,M]} 
\bigl\vert \psi^n_{\nu\pm}(x) - \psi_{\lambda_n(\nu)\pm}(x)\bigr\vert =0 
\ \implies \  
\lim_{n\to\infty}  | \psi_{\lambda_n(\mu_n)}(x_n)- \psi_{\lambda_n(\mu_n)-}(x_n) | \ge a.  
 \]
  We have   $\lambda_n(\mu_n)\to\mu$ but the size of the jump of $\psi_\bbullet$ at $\lambda_n(\mu_n)$ does not decay to zero. 
By Lemma \ref{lm:D567},  this is possible only if $\lambda_n(\mu_n)=\mu$ for all large enough $n$. This  turns the above into  $\lim_{n\to\infty}  |\psi_{\mu}(x_n)- \psi_{\mu-}(x_n)| \ge a$.    Since  $x_n\in[-M,M]$  we have $\sigma_{\mu,a}(\psi)\le M$. 
\end{proof}

The object of interest is the point  measure  $\Lambda_a$ on 
$\R\times\R_+\times C(\R)$ defined for $\psi\in D(\R, C(\R))$ and  $a>0$: 
\be\label{La87}  \Lambda_a(\psi) = \sum_{\mu\tsp\in\tsp\Xi(\psi)} \delta_{(\mu,  \,\sigma_{\mu,a}(\psi),\, J_{\mu, \psi})} . \ee 
We argue that $\Lambda_a(\psi)$ is finite on bounded sets. 
Let $M\in(0,\infty)$ and consider the projection $D(\R, C(\R))\to D(\R, C[-M,M])$ by restriction: for $\mu\in\R$,  $C(\R)\ni\psi_\mu\mapsto \psi_{\mu,M}= \psi_\mu\vert_{[-M,M]}\in C[-M,M]$.    As in \eqref{dn}, let $d_{M}$ denote the uniform metric on $C[-M,M]$. 
 Then the set 
\be\label{psi267} 
\{ \mu\in\R:      \sigma_{\mu,a}(\psi)\le M\} =  \{ \mu\in\R:    d_{M}(\psi_{\mu-,M}, \psi_{\mu,M})\ge a  \}
\ee
is discrete because  $\mu\mapsto\psi_{\mu,M}$ is a $C[-M,M]$-valued cadlag path and large jumps cannot accumulate in a cadlag path.  
Thus $\Lambda_a$ in \eqref{La87} is 
an element of the space $\M(\R\times\R_+\times C(\R))$ of locally finite Borel measures, which is a Polish  with its vague topology.   

  

\begin{lemma}
$\Lambda_a: D(\R, C(\R))\to \M(\R\times\R_+\times C(\R))$ is a Borel mapping. 
\end{lemma} 

\begin{proof}    The Borel $\sigma$-algebra on the measure space $\M(\R\times\R_+\times C(\R))$ is generated by evaluation of the measures on bounded Borel sets.  
  Let $R\in\R$ and $M\in(0,\infty)$.   By virtue of \eqref{psi267}, we can utilize definition \eqref{tRka} to enumerate the locations in $[R, \infty)$ of jumps of size $\ge a$  and express the restriction of $\Lambda_a(\psi)$ to $[R,\infty)\times[-M,M]\times C(\R)$ as 
\[    \Lambda_a(\psi, B)  = \sum_{k=0}^\infty \ind_B\bigl(\tau^R_{k,a}(\psi),   \sigma_{a, \tau^R_{k,a}(\psi)}(\psi), J_{\tau^R_{k,a}(\psi), \psi}\bigr) 
\quad \text{for Borel } B\subset [R, \infty)\times[-M,M]\times C(\R). 
\]
All three components 
of the point measure are Borel functions of $\psi$:   $\tau^R_{k,a}(\psi)$ by Lemma \ref{lm:D573},  $\sigma_{a, \tau^R_{k,a}(\psi)}(\psi)$ by Lemma \ref{lm:psi3.87},  and   $J_{\tau^R_{k,a}(\psi), \psi}$ because $(\mu,\psi)\mapsto J_{\mu, \psi}$ is jointly measurable in $(\mu, \psi)$. 
\end{proof}

For the final piece of the argument, we restrict to 
  the following closed subspace  $D_{SH}$ of  $D(\R, C(\R))$:
\be\label{DSH7}\begin{aligned} 
D_{SH}=\bigl\{ & \psi\in D(\R, C(\R)) :   \psi_\mu(0)=0 \text{ for all $\mu\in\R$, and } \;   \\
 &\qquad\qquad   
 \text{for each pair $\mu<\nu$ in $\R$, }  x\mapsto \psi_\nu(x)- \psi_\mu(x) \text{  is nondecreasing\tsp}
 \bigr\} .
\end{aligned}\ee
This is the path space of the stationary horizon and the processes  $H^{v,N}$ in \eqref{H138}. 
For  $\psi\in D_{SH}$  we can write   the definition \eqref{smua57} of   $\sigma_{\mu,a}$ without  absolute values: 
\be\label{smua7}   \sigma_{\mu,a}(\psi)= \inf\bigl\{ y\ge0:  [\psi_\mu(y)- \psi_{\mu-}(y)] \vee [\psi_{\mu-}(-y) -\psi_\mu(-y) ] \, \ge \, a\bigr\}  \qquad\text{for } \psi\in D_{SH} .\ee   
Furthermore, $J_{\mu, \psi}$  is a nondecreasing function on $\R$.

\begin{lemma}\label{lm:psi87}   Fix $a>0$ and $\psi\in D_{SH}$. 
Suppose there exists a symmetric,  dense subset $\mathcal Z$  of $\R$ such that 
$|\psi_\mu(z)- \psi_{\mu-}(z)|\ne a$ 
for all $\mu\in\R$ and $z\in\mathcal Z$.  
Then whenever  $\psi^n\to\psi$ in 
 $D_{SH}$, also  $\Lambda_a(\psi^n)\to\Lambda_a(\psi)$ in the space $\M(\R\times\R_+\times C(\R))$.  
\end{lemma}

\begin{proof}

Let  $\mu_0>0$ be such  that $\mu\mapsto\psi_\mu$ is continuous at $\pm\mu_0$.   Let $M>0$ satisfy   $\pm M\in\mathcal Z$. 
Let $(\mu_1, \sigma_1, J_1)$, $\dotsc$, $(\mu_k, \sigma_k, J_k)$ with $\sigma_i=\sigma_{a, \mu_i}(\psi)$ and $J_i=\psi_{\mu_i}-\psi_{\mu_i-}$  be an enumeration of the finite set 
\be\label{psi337} \{ (\mu, \sigma_{\mu,a}(\psi), J_{\mu, \psi}):    \mu\in\Xi(\psi)\cap[-\mu_0,\mu_0], \,   \sigma_{\mu,a}(\psi)\le M\}  .   \ee
   We claim that for large enough $n$, 
\be\label{psi377} \begin{aligned}
&\{ (\mu, \sigma_{\mu,a}(\psi^n), J_{\mu, \psi^n}):    \mu\in\Xi(\psi^n)\cap[-\mu_0,\mu_0], \,   \sigma_{\mu,a}(\psi^n)\le M\}    \\[4pt]  &\qquad \qquad 
=\{ (\mu^n_1, \sigma^n_1, J^n_1),\dotsc,(\mu^n_k, \sigma^n_k, J^n_k) \} 
\end{aligned}  \ee 
such that as $n\to\infty$,   $(\mu^n_i, \sigma^n_i, J^n_i) \to (\mu_i, \sigma_i, J_i)$ for $i\in\lzb1,k\rzb$.  Since $\mu_0$ and $M$ can be taken arbitrarily large, this implies the vague convergence $\Lambda_a(\psi^n)\to\Lambda_a(\psi)$ of simple point measures.   The rest of this proof verifies the claim. 



\smallskip 

Since $\sigma_i\le M$  while $\psi_{\mu_i}(M)- \psi_{\mu_i-}(M)  \ne a$ and $\psi_{\mu_i-}(-M)- \psi_{\mu_i}(-M)  \ne a$, we must have 
\be\label{psi897} 
\sigma_i< M \quad \text{and}\quad  
[\psi_{\mu_i}(M)- \psi_{\mu_i-}(M)] \vee [\psi_{\mu_i-}(-M)- \psi_{\mu_i}(-M)] > a. 
\ee
 
 Set 
\be\label{psi927} \begin{aligned} 
&\mu^n_i=\lambda_n^{-1}(\mu_i) \quad\text{for } \ i\in\lzb1,k\rzb,  \quad \text{which determines }  \\[2pt]
& \quad  \sigma^n_i=\sigma_{a,\mu^n_i}(\psi^n)=\sigma_{a, \lambda_n^{-1}(\mu_i)}(\psi^n) \ \text{ and }\ 
J^n_i =  J_{\mu^n_i, \psi^n}=J_{\lambda_n^{-1}(\mu_i), \,\psi^n}.
\end{aligned} \ee 
  The first limit in \eqref{psi347}  gives $\mu^n_i\to\mu_i$.  The third limit in \eqref{psi347} together with \eqref{psi897} ensures that,  for large enough $n$, $\mu^n_i\in\Xi(\psi^n)$  and  
 \[  [\psi^n_{\mu^n_i}(M)- \psi^n_{\mu^n_i-}(M)] \vee [\psi^n_{\mu^n_i-}(-M)- \psi^n_{\mu^n_i}(-M)]  > a \] 
  and thereby also   $\sigma^n_i<M$.  The third limit in \eqref{psi347} gives also the locally uniform convergence 
 $ 
   J^n_i=\psi^n_{\lambda_n^{-1}(\mu_i)}-\psi^n_{\lambda_n^{-1}(\mu_i)-} \to \psi_{\mu_i}-\psi_{\mu_i-}=J_i$. 
    An  application of  Lemma \ref{lm:f677} (to be proved below)    to the functions $f_n=J^n_i$ and $f=J_i$ 
    gives  the limits $\sigma^n_i \to \sigma_i$.  
  
To summarize,   \eqref{psi927}  defines the set on the right of \eqref{psi377} which converges element by element to the set in \eqref{psi337} and which is a subset of the set on the left of \eqref{psi377}.  It remains to verify that for large enough $n$ the set on the left of \eqref{psi377} has no elements besides $(\mu^n_1, \sigma^n_1, J^n_1),\dotsc,(\mu^n_k, \sigma^n_k, J^n_k)$.  

  Suppose on the contrary that along some subsequence (denoted again by $n$)  there exists $\nu^n\in\Xi(\psi^n)\cap[-\mu_0,\mu_0]$ such that 
$\nu^n\notin\{\mu^n_1, \dotsc,\mu^n_k\}$ and 
 $\rho^n= \sigma_{a, \nu^n}(\psi^n)\le M$.   The latter condition forces 
 $[\psi^n_{\nu^n}(M)-\psi^n_{\nu^n-}(M)]\vee[\psi^n_{\nu^n-}(-M)-\psi^n_{\nu^n}(-M)]\ge a$.
 The limit in \eqref{psi347}  then implies that 
 \be\label{psi987} \varliminf_{n\to\infty} [\psi_{\lambda_n(\nu^n)}(M)-\psi_{\lambda_n(\nu^n)-}(M)]\vee[\psi_{\lambda_n(\nu^n)-}(-M)-\psi_{\lambda_n(\nu^n)}(-M)] \, \ge\, a. \ee
Pass to a further subsequence (still denoted by $n$) such that $\lambda_n(\nu^n)\to\bar\nu\in[-\mu_0,\mu_0]$.   Then  by Lemma \ref{lm:D567}, it must be that $\lambda_n(\nu^n)=\bar\nu$ for all large enough $n$ in the subsequence.
   We have established the existence of  
\[   \bar\nu\in[-\mu_0,\mu_0] \ \ \text{ such that } \ \    [\psi_{\bar\nu}(M)-\psi_{\bar\nu-}(M)]\vee[\psi_{\bar\nu-}(-M)-\psi_{\bar\nu}(-M)] \, \ge \, a  . 
\] 
 Thus $(\bar\nu, \sigma_{a, \bar\nu}(\psi), J_{\bar\nu,\psi})$ is an element of the set \eqref{psi337} and hence must equal $(\mu_j, \sigma_j, J_j)$ for some $j\in\lzb1,k\rzb$. 
 Now $\nu^n$ and $\mu^n_j$ are different  locations in $\psi^n$ of jumps of size $\ge a$ but both converge to $\bar\nu=\mu_j$.   This contradicts Lemma \ref{lm:D557}.   
\end{proof}



It remains to provide the technical lemma appealed to above in the proof of Lemma \ref{lm:psi87}:

\begin{lemma} \label{lm:f677}    Let $f_n\in C(\R)$ be nondecreasing functions such that  $f_n(0)=0$ and $f_n\to f$ locally uniformly. Let $a>0$, $\sigma_n=\inf\{ x\ge0: f_n(x)\vee[-f_n(-x)] \ge a\}$ and $\sigma=\inf\{ x\ge0: f(x)\vee[-f(-x)] \ge a\}$.   Then $\sigma\le \varliminf\sigma_n$.   

Assume further that $|f(z)|\ne a$  for $z$ in some symmetric dense subset $\mathcal Z$  of $\R$. Then $\sigma= \lim\sigma_n$.   
\end{lemma} 

\begin{proof}
Suppose   $\sigma_{n_j}\to y<\infty$.  Then by the local uniform convergence,   
$a= f_{n_j}(\sigma_{n_j}) \vee [-f_{n_j}(-\sigma_{n_j})]  \to  f(y)\vee[-f(-y)]$, which implies $\sigma\le y$.  Thus we have 
$\sigma\le \varliminf\sigma_n$. 

To prove the remaining part we can assume $\sigma<\infty$.   Pick $z\in\mathcal Z$ such that $z>\sigma$.  Then $f(z)\vee[-f(-z)] >a$, and the limit forces $f_n(z)\vee[-f_n(-z)] >a$ for large enough $n$, implying $\sigma_n<z$. Thus we have $\varlimsup\sigma_n\le\sigma$. 
\end{proof}

\section{Stationary horizon} \label{sec:stat_horiz}
Consider the following map from ~\cite{Seppalainen-Sorensen-21b} (an equivalent yet somewhat cumbersome version of this map was used in \cite{Busani-2021})  
defined for functions that satisfy $f(0) = g(0) = 0$:  
\be \label{Phialt}
\Phi(f,g)(y) = f(y) + \sup_{-\infty <x \le y }\{g(x) - f(x)\} - \sup_{-\infty < x \le 0}\{g(x) - f(x)\}
\ee
We note that the map $\Phi$ is well-defined only on the appropriate space of functions where the suprema are all finite.
This map extends to maps $\Phi^k:C(\R)^k \to C(\R)$ as follows. \begin{enumerate}
    \item $\Phi^1(f_1)(x) = f_1(x)$. 
    \item $\Phi^2(f_1,f_2)(x) = \Phi(f_1,f_2)$,\qquad\text{and for }$k \ge 3,$ 
    \item $\Phi^k(f_1,\ldots,f_k) = \Phi(f_1,\Phi^{k - 1}(f_2,\ldots,f_k))$.
\end{enumerate}
We may drop the superscript and simplify to  $\Phi(f_1,\ldots,f_k) = \Phi^k(f_1,\ldots,f_k)$. 
As throughout the paper, $C(\R)$ has the Polish topology of uniform convergence on compact sets.

\begin{definition} \label{def:SH}
The stationary horizon $\{G_\mu\}_{\mu \in \R}$ is a process with state space $C(\R)$ and with paths in the Skorokhod space $D(\R,C(\R))$ of right-continuous functions $\R \to C(\R)$ with left limits.  The law of the stationary horizon is characterized as follows: For real numbers $\mu_1 < \cdots < \mu_k$, the $k$-tuple  $(G_{\mu_1},\ldots,G_{\mu_k})$ of continuous functions   has the same law as $(f_1,\Phi^2(f_1,f_2),\ldots,\Phi^k(f_1,\ldots,f_k))$, where $f_1,\ldots,f_k$ are independent two-sided Brownian motions with drifts $2\mu_1,\ldots,2\mu_k$, and each with diffusion coefficient $\sqrt 2$. 
\end{definition}


The following theorem  collects  facts about the stationary horizon from \cite{Busani-2021,Seppalainen-Sorensen-21b,Busa-Sepp-Sore-22arXiv}. For notation, let $G_{\mu+} = G_\mu$, and let $G_{\mu -}$ be the limit of $G_{\alpha}$ as $\alpha \nearrow \mu$. 
\begin{theorem}[\cite{Busani-2021}, Theorem 1.2; \cite{Seppalainen-Sorensen-21b}, Theorems 3.9, 3.11, 3.15, 7.20 and Lemma 3.6] \label{thm:SH10}  The following hold for the stationary horizon.
\begin{enumerate} [label=\rm(\roman{*}), ref=\rm(\roman{*})]  \itemsep=3pt
    \item\label{itm:SHpm} For each $\mu \in \R$, $G_{\mu -} = G_{\mu +}$ with probability one,  and $G_\mu$ is a two-sided Brownian motion with diffusion coefficient $\sqrt 2$ and drift $2\mu$
    \item \label{itm:SH_sc} For $c > 0$ and $\nu \in \R$, \ 
    $ 
    \{cG_{c (\mu + \nu)}(c^{-2}x) - 2\nu x  : x\in \R\}_{\mu \in \R} \,\deq\, \{G_\mu(x): x \in \R\}_{\mu \in \R}.
    $ 
    \item\label{itm:SH_sc2} Spatial stationarity holds in the sense that,  for $y \in \R$, 
    \[\{G_{\mu}(x):x \in \R\}_{\mu \in \R} \deq \{G_{\mu}(y,x + y): x \in \R\}_{\mu \in \R}.\]
    
\item \label{SHrelfect}Reflection property:  
        $
        \{G_{(-\mu)-}(-\tspb\aabullet)\}_{\mu \in \R} \deq \{G_\mu(\aabullet)\}_{\mu \in \R}.
        $
    
    \item \label{itm:SHdist} Fix $x > 0$ , $\mu_0 \in \R$,  $\mu > 0$, and $z \ge 0$. Then, 
    \begin{align*}
    &\Pp\bigl(\sup_{a,b \in [-x,x]}|G_{\mu_0 + \mu}(a,b) - G_{\mu_0 }(a,b)| \le z\bigr) = \Pp\bigl(G_{\mu_0 + \mu}(-x,x) - G_{\mu_0}(-x,x) \le z\bigr) \\
    &\qquad = \Pp\bigl(G_{\mu_0 + \mu}(2x) - G_{\mu_0}(2x) \le z\bigr) \\
    &\qquad = \Phi\Bigl(\f{z - 2\mu x}{2\sqrt {2x}}\Bigr) + e^{\f{\mu z}{2}}\biggl(\Bigl(1 + \tfrac12{\mu}z + \mu^2 x \Bigr)\Phi\Bigl(-\f{z + 2\mu x}{2 \sqrt{2 x}}\Bigr) - \mu\sqrt{{x}/{\pi}\tspa} \tspb e^{-\f{(z + 2\mu x)^2}{8x}}\biggr)
    \end{align*}
     where $\Phi$ is the standard normal distribution function. This distribution has an atom at $z=0$ and no other atoms. 
    \item \label{itm:exp} For $x < y$ and $\alpha < \beta$, with $\#$ denoting the cardinality, 
    \[
    \E[\#\{\mu \in (\alpha,\beta): G_{\mu-}(x,y) < G_{\mu +}(x,y) \}] = 2\sqrt{{2}/{\pi}\tsp}
    (\beta - \alpha)\sqrt{y - x}.
    \]
\end{enumerate}
Furthermore, the following holds on a single event of full probability.
\begin{enumerate} [resume, label=\rm(\roman{*}), ref=\rm(\roman{*})]  \itemsep=3pt
    \item \label{itm:SH_j} For $x_0 > 0$   define  the process $G^{x_0} \in D(\R,C[-x_0,x_0])$   by restricting each function $G_\mu$ to $[-x_0,x_0]$: $G^{x_0}_\mu=G_\xi\vert_{[-x_0,x_0]}$. Then, $\mu\mapsto G^{x_0}_\mu$ is a $C[-x_0,x_0]$-valued  jump process with finitely many jumps in any compact interval, but countably infinitely many jumps in $\R$. The number of jumps in a compact interval has finite expectation given in item \ref{itm:exp} above, and each direction $\mu$ is a jump direction with probability $0$. In particular, for each $\mu \in \R$ and compact set $K$, there exists a random $\ve = \ve(\mu,K)>0$ such that for all $\mu - \ve < \alpha < \mu < \beta < \mu  + \ve$, $\sigg \in \{-,+\}$, and all $x \in K$, $G_{\mu -}(x) = G_{\alpha}(x)$ and $G_{\mu +}(x) = G_{\beta}(x)$.
    \item \label{itm:SH_mont} For $x_1 \le x_2$, $\mu \mapsto G_{\mu}(x_1,x_2)$ is a non-decreasing jump process.
    \item Let $\alpha < \beta$. The function $x\mapsto G_\beta(x)-G_\alpha(x)$ is nondecreasing.  There exist finite $S_1 = S_1(\alpha,\beta)$ and $S_2 = S_2(\alpha,\beta)$ with $S_1 < 0 < S_2$ such that $G_{\alpha }(x) = G_{\beta }(x)$ for $x \in [S_1,S_2]$ and $G_{\alpha }(x) \ne G_{\beta }(x)$ for $x \notin [S_1,S_2]$.
    \item \label{itm:bad_dir_contained} 
    Let $\alpha < \beta$,  $S_1 = S_1(\alpha,\beta)$ and $S_2 = S_2(\alpha,\beta)$.  Then $\exists\tspa \zeta, \eta\in[\alpha, \beta]$ such that,     \begin{align*} 
    &\text{$G_{\zeta -}(x) = G_{\zeta +}(x)$ for $x \in [-S_1,0]$, and $G_{\zeta -}(x) > G_{\zeta +}(x)$ for $x < S_1$, and } \\
    &\text{$G_{\eta -}(x) = G_{\eta +}(x)$ for $x \in [0,S_2]$, and $G_{\eta -}(x) < G_{\eta +}(x)$ for $x > S_2$.}
    \end{align*} 
        In particular, the set $\{\mu \in \R: G_{\mu} \neq G_{\mu-}\}$ is dense in $\R$.
        
\end{enumerate}
\end{theorem}

\bibliographystyle{alpha}
\bibliography{references_file}
\end{document}